\def\blx@maxline{77}
\definecolor{blue75}{rgb}{0,0,.75}
\definecolor{green75}{rgb}{0,.75,0}
\newcommand{\parenthezises}[1]{\arabic{#1}}
\crefname{equation}{}{}
\crefname{enumi}{}{}
\crefname{section}{{\it Section}}{{\it Sections}}
\crefname{subsection}{{\it Subsection}}{{\it Subsections}}
\crefname{subsubsection}{{\it Paragraph}}{{\it Paragraphs}}
\newtheorem{Theorem}{Theorem}[section]
\crefname{Theorem}{{\it Theorem}}{{\it Theorems}}
\newtheorem{Definition}[Theorem]{Definition}
\crefname{Definition}{{\it Definition}}{{\it Definitions}}
\newtheorem{Lemma}[Theorem]{Lemma}
\crefname{Lemma}{{\it Lemma}}{{\it Lemmas}}
\crefname{Proposition}{{\it Proposition}}{{\it Propositions}}
\crefname{Assumption}{{\it Assumption}}{{\it Assumptions}}
\crefname{Corollary}{{\it Corollary}}{{\it Corollaries}}
\theoremstyle{definition}
\newtheorem{Remark}[Theorem]{Remark}
\crefname{Remark}{{\it Remark}}{{\it Remarks}}
\crefname{Notation}{{\it Notation}}{{\it Notations}}
\crefname{Example}{{\it Example}}{{\it Examples}}
\newcommand{\be}{\begin{equation} \label}
\newcommand{\ee}{\end{equation}}
\newcommand{\bea}{\begin{eqnarray}\label}
\newcommand{\eea}{\end{eqnarray}}
\newcommand{\bas}{\begin{eqnarray*}}
	\newcommand{\eas}{\end{eqnarray*}}
\newcommand{\bit}{\begin{itemize}}
	\newcommand{\eit}{\end{itemize}}
\begin{document}
	\enlargethispage{10mm}

\title{Global existence of weak solutions to a tissue regeneration model}
\author{Nishith Mohan\footnote{mohan@mathematik.uni-kl.de}\quad  and\quad  Christina Surulescu\footnote{surulescu@mathematik.uni-kl.de}\\
	{\small RPTU Kaiserslautern-Landau, Department of Mathematics,} \\
	{\small Gottlieb-Daimler-Str. 48, 67663 Kaiserslautern, Germany}}

\date{\today}
\maketitle

\begin{abstract} We study a cross-diffusion model for tissue regeneration which involves the dynamics of human mesenchymal stem cells interacting with chondrocytes in a medium containing  a differentiation factor. The latter acts as a chemoattractant for the chondrocytes and influences the (de)differentiation of both cell phenotypes. The stem cells perform haptotaxis towards extracellular matrix expressed by the chondrocytes and degraded by themselves. Cartilage production as part of the extracellular matrix is ensured by condrocytes. The growth factor is provided periodically, to maintain the cell dynamics. We provide a proof for the global existence of weak solutions to this model, which is a simplified version of a more complex setting deduced in \cite{surulescu_AMM}.  
\end{abstract}	

\section{Introduction}

Tissue regeneration has attracted increasing interest during the last decades, due to poor mid- and long-term outcomes of treatments focusing on resection \cite{Paxton2011,Stein2010}. This led, e.g., in the context of meniscal tears to a paradigm shift in therapeutic approaches which currently promote healing by repair or regeneration \cite{Makris2014,Pillai2018}. The quest for appropriate implants serving as support for cartilage regeneration is ongoing; very few products are (commercially) available and each of them has its drawbacks, see \cite{surulescu_AMM} for a very concise review. Several aspects have to be taken into account on the way towards an optimal artificial scaffold. Thus, not only the physical and structural properties of the material are relevant, but also how the main cell types involved in the production and degradation of tissue components interact with the scaffold and its embedding environment and correspondingly adapt their migration, proliferation, (de)differentiation, and expression of extracellular matrix (ECM). \\[-2ex]

\noindent
Relatively few mathematical models accounting for (some of) these aspects are available; we refer to \cite{ODea2012,WSE-H} for reviews of modeling in tissue regeneration and engineering in a larger framework and to \cite{Burova2019} for models dedicated to bone tissue engineering. The vast majority of the continuous models are either multiphase approaches, where the cell populations and the tissue are components of a mixture also containing fluid(s) in which chemical cues are dissolved, see e.g.  \cite{klika2016overview,Borgiani2017} and references therein, or involve reaction-diffusion(-transport) equations (RD(T)Es). The former category has the advantage of being able to include biomechanical effects in a more detailed way, but the rigorous mathematical analysis of such settings is challenging and rarely addressed. \cite{jackson2002mechanical,Kumar2022} established connections (in 1D and higher dimensions, respectively) between multiphase and RDTE models in a biologically different, but mathematically closely related framework. 
 The settings presented in \cite{Campbell2019a,Campbell2019b} and following the one introduced in \cite{Lutianov2011} focus on the dynamics of pre-cultured mesenchymal stem cells (MSCs) seeded into the defect region, coupled with the evolution of (co-implanted) chondrocytes and growth factors/nutrients, and newly formed ECM. Those models are one-dimensional in space, require diffusion of ECM, and concentrate on performing numerical simulations to assess the effect of various cell implantation scenarios and composition of the extracellular space. For models in higher dimensions we refer, e.g., to \cite{andreykiv2008simulation,BAILONPLAZA2001,geris2008angiogenesis}, of which the latter two also feature haptotaxis of MSCs towards gradients of ECM, \cite{geris2008angiogenesis} also including chemotaxis of MSCs, fibroblasts, and endothelial cells towards gradients of growth factors. All these models have been set up in a heuristic manner, directly on the macroscale where space-time dynamics of volume fractions for cells, tissues, and chemoattractants are studied. Recently a multiscale approach was employed in \cite{surulescu_AMM} to deduce a complex macroscopic model for MSC and chondrocyte dynamics in an artificial PET scaffolds contained in a bioreactor. The deduction mehod follows previous works in the context of cell migration through anisotropic tissue  \cite{engwer2015glioma,engwer2016effective,conte2023mathematical,corbin2021modeling,Hillen2006}; it starts from microscale dynamics on the subcellular level and uses the mesoscopic description via kinetic transport equations for the MSC and chondrocyte density functions to obtain by parabolic upscaling reaction-diffusion-taxis equations on the population level of space-time dependence. Therein, the MSC motility terms involve a cell diffusion tensor which carries information about the scaffold's fibre distribution. The development is informal, but we refer to \cite{zhigun2022novel} for a rigorous result in a much  simpler setting. Beside dynamics of MSC and chondrocytes interacting with fibre bound proteins, newly produced ECM, and a differentiation medium, the model in \cite{surulescu_AMM} also involves fluid flow and therewith associated deformations of the scaffold. The anisotropic structure of the scaffold is accounted for by way of statistical estimation of the directional fibre distribution performed on CT data. The precursor models in \cite{Grosjean23,JaegerGrosjeanPlunderetal.2024} are simpler descriptions of the same biological problem: meniscus cartilage regeneration. All mentioned works focus on numerical simulations and investigations of various aspects of the considered dynamics. To our knowledge, \cite{Mohanan2025,Mohan2026} are the first works to address in this context analytical issues of reaction-diffusion-taxis models for cell migration, (de)differentiation, and spread in a heterogeneous environment. In those simplified settings an effective description of the scaffold is replaced by the dynamics of hyaluron impregnating the scaffold fibres. Thus, \cite{Mohanan2025} studied global existence of classical solutions for a model with MSCs performing taxis towards fibre-bound hyaluron gradients, as well as pattern formation, showing that patterns were driven by the mentioned taxis. The very recent result in \cite{Mohan2026} shows global existence of weak solutions to a more complex system with double haptotaxis of MSCs towards gradients of hyaluron and of newly-formed tissue produced by chondrocytes.\\[-2ex]
 
 \noindent
 In this note we consider yet another version of the model part in \cite{surulescu_AMM} which describes dynamics of cells, ECM, and differentiation medium and further simplify it - however also allowing for tactic behavior of chondrocytes.  \\[-2ex]
 
 \noindent
The rest of the paper is organized as follows: in Section \ref{sec:model-maintho} we set up and explain the model and present the main result, which claims the global existence of weak solutions to the introduced model. Section \ref{sec:approx-probl} introduces a sequence of regularized problems, which is supposed to approximate the actual one. In Section \ref{sec:GE_approx} the global existence of solutions to the approximate problems is shown. Section \ref{sec:entropy} is dedicated to obtaining estimates which stem from an entropy-type functional and are essential for the passage to limits and therewith  associated construction of weak solutions in Section \ref{sec:limits}.

\section{Model set-up and statement of the main result}\label{sec:model-maintho}

%

We consider the following model:
  \begin{equation}
  \begin{cases}
  \label{model}
    \partial_t c_1 = a_1 \Delta c_{1} - \nabla \cdot (b_\tau c_1 \nabla \tau) - \alpha_1(\chi) \frac{c_1}{1 + c_1} + \alpha_2(\chi) \frac{c_2}{1 + c_2}  + \beta c_{1}(1 - c_{1} - c_{2} - \tau),                             &  x \in \Omega,\ t > 0,           \\[7pt]
    \partial_t c_2  = a_2 \Delta c_2 - \nabla \cdot (b_\chi c_2 \nabla \chi) + \alpha_1(\chi) \frac{c_1}{1 + c_1} -\alpha_2(\chi) \frac{c_2}{1 + c_2} ,                                                                       &  x \in \Omega,\ t > 0,           \\[7pt]
    \partial_t \chi = D_\chi \Delta \chi - a_\chi (c_1 + c_2) \chi + F(\chi),                                                                                                                      &  x \in \Omega,\ t > 0,           \\[7pt]
    \partial_t \tau = - \delta c_1 \tau - \mu \tau +  \tfrac{c_2}{1 + c_2},                                                                                                                  &  x \in \Omega,\ t > 0,
  \end{cases}
\end{equation}
subject to zero-flux boundary conditions ($\nu$ denotes the outward unit normal on the boundary of $\Omega$ )
\begin{equation}
  \label{boundary_conditions}
  a_1 \frac{\partial c_1}{\partial \nu} - b_\tau c_1 \frac{\partial \tau}{\partial \nu} = \frac{\partial c_2}{\partial \nu} = \frac{\partial \chi}{\partial \nu} = 0,  \quad x \in \partial \Omega, ~~ t > 0,
\end{equation}
and initial conditions
\begin{equation}
  \label{initial_conditions}
  c_1(x, 0) = c_{10}(x), \quad c_2(x, 0) = c_{20}(x), \quad \chi(x, 0) = \chi_0(x) \quad \tau(x, 0) = \tau_0(x) \quad x \in \Omega.
\end{equation}
Thereby, $c_1$ and $c_2$ represent volume fractions of MSC and chondrocyte cell populations, respectively, $\tau$ is the density of ECM expressed by chondrocytes, and $\chi $ denotes the concentration of a differentiation medium. The latter induces and sustains differentiation of MSCs to chondrocytes and the phenotype preservation of the latter. It diffuses throughout the whole region containing the cells and ECM, is uptaken by both cell types, and has to be periodically supplied from outside, in order to prevent dedifferentiation of chondrocytes and to maintain the cell populations and their dynamics.  The supply $F(\chi )$ of differentiation medium  will be addressed in more detail in \eqref{F_assump} below. In our setting, MSCs are co-seeded and co-cultured with chondrocytes (typically with MSCs clearly dominating their differentiated counterparts).\\[-2ex]

\noindent
Both types of cells diffuse and can infer phenotypic switch by (de)differentiation, with the corresponding rates depending on the amount of available $\chi$ and with intrinsic limitations. MSCs perform haptotaxis towards ECM; they try to adapt their direction of motion to local cues in the tissue. They also proliferate; here we assume this to happen in a logistic manner, with intra- and interspecific restrictions. \\[-2ex]

\noindent
Chondrocytes produce cartilage (with volume fraction $\tau$), which is a specialized part of the ECM. This production is inferring saturation when too high levels on $c_2$ become available. In fact, the differentiation medium also contains growth factors, which act as chemoattractants for the chondrocytes. In order not to complicate the setting we lump such soluble components in the 'differentiation medium' notion. This motivates the second term on the right hand side of the $c_2$ equation in \eqref{model}. Unlike MSCs, mature chondrocytes have limited proliferation capacity. In vivo, they are relatively quiescent and proliferate slowly, if at all, especially in healthy adult cartilage \cite{Loeser2009}, while in vitro they can infer division to some extent, but tend to quickly dedifferentiate, thus losing their ability to produce cartilage, see e.g.  \cite{VONDERMARK1977}. Therefore, we do not include a proliferation term in the second equation of \eqref{model}.\\[-2ex]

\noindent
Eventually, ECM is expressed by chondrocytes - as mentioned, is degraded in a natural manner - with rate $\mu$, and degraded by MSCs, e.g. by expression of matrix degrading enzymes.\\[-2ex]

\noindent
Model \eqref{model} originates from that obtained in \cite{surulescu_AMM}, but it is different, in the sense that we replaced here the myopic MSC and chondrocyte diffusions by linear ones, took constant motility coefficients instead of those involving the cell diffusion tensors, and let the ECM performing only haptotaxis towards gradients of newly produced ECM. Thus we do not account here for (indirect) scaffold dynamics via evolution of fibre bound proteins and taxis of MSCs towards such gradients. Instead, $c_2$ cells perform chemotaxis towards differentiation medium (more precisely toward chemical cues contained therein). \\[-1ex]

\noindent
Further, we assume that
\begin{equation}
\label{main_ic_assumptions}
  \begin{cases}
    c_{10}, c_{20} \in C^0(\bar{\Omega}), \quad \chi_0, \tau_0 \in W^{1, 2}(\Omega) \cap C^0(\bar{\Omega}), \\[2mm]
    c_{10}, c_{20} \geq 0, \quad \chi_0, \tau_0 > 0 \text{ in } \Omega, \quad c_{10} \not\equiv 0, \quad c_{20} \not\equiv 0,
  \end{cases}
\end{equation}
the functions $\alpha_i$, $i \in \{1, 2\}$, satisfy
\begin{equation}
\label{alpha_def}
\begin{cases}
  \alpha_i(z) \in C^{\vartheta, \frac{\vartheta}{2}}(\bar{\Omega} \times [0, T]), \quad (\vartheta \in (0, 1), ~ T > 0), \quad \alpha_i(z) > 0, \\[1mm]
  \alpha_i(z) \leq M_{\alpha_i}, \quad i \in \{1, 2\}, \quad \text{for all } z \geq 0,
\end{cases}
\end{equation}
and $F(\chi)$ is defined as
\begin{align}
\label{F_assump}
F(\chi)(x, t) := \chi(x, t \in \mathcal{T}_\chi) = \chi_0 \frac{1}{|\Omega|} \mathbbm{1}_\Omega(x),
\end{align}
with $\mathcal{T}_\chi$ a finite set of predefined time points (in days) \footnote{In the experiments performed by our A. Ott and
	G. Schmidt at the Deutsche Institute f\"ur Textil- und Faserforschung (DITF) in Denkendorf the differentiation medium was provided every 3rd day, over a total time span of 3 weeks.}. It models the fact that the differentiation medium is provided at several different times
during the experiment, each time the same overall quantity $\chi _0$, which is supposed to quickly diffuse within
the whole domain $\Omega $. We therefore consider it to be uniformly distributed. \\[-2ex]

\noindent
Moreover, all parameters $a_1, a_2, a_\chi, b_\tau, b_\chi, D_\chi, \beta, \delta,$ and $\mu$ are positive.\\[-1ex]

\noindent
The primary objective of this work is to construct global weak solutions for the problem \eqref{model}, \eqref{boundary_conditions}, and \eqref{initial_conditions}. To this end, we will first define weak solutions to problem \eqref{model}-\eqref{initial_conditions}.
\begin{Definition}
  \label{solution_definition}
  Let $T \in (0, \infty)$. A weak solution to the problem \eqref{model}-\eqref{initial_conditions} in $\Omega \times (0, T)$ consists of a quadruple of nonnegative functions $(c_1, c_2, \chi, \tau)$ such that
  \begin{equation*}
    \begin{cases}
      c_1 \in L^2(\Omega \times (0, T)) \cap L^{\frac{4}{3}}(0, T; W^{1, \frac{4}{3}}(\Omega)),             \\[5pt]
      c_2 \in L^{\frac{5}{4}}(0, T; W^{1, \frac{5}{4}}(\Omega)),             \\[5pt]
      \chi \in L^2(0, T; W^{1, 2}(\Omega)),   \quad ~\text{and}     \\[5pt]
      \tau \in L^\infty(\Omega \times (0, T)) \cap L^2(0, T; W^{1, 2}(\Omega)),
    \end{cases}
  \end{equation*}
  and satisfy the equations
\begin{align}
  \label{defeq1}
  \nonumber  - \int_0^T \int_\Omega c_{1} \partial_t \psi - \int_\Omega c_{10} \psi(\cdot, 0) & = - a_1 \int_0^T \int_\Omega \nabla c_{1} \cdot \nabla \psi + b_\tau \int_0^T \int_\Omega c_{1} \nabla \tau \cdot \nabla \psi  - \int_0^T \int_\Omega \alpha_1(\chi)\frac{c_{1}}{1 + c_{1}} \psi\\[5pt]
  & + \int_0^T \int_\Omega \alpha_2(\chi)\frac{c_{2}}{1 + c_2} \psi + \beta \int_0^T \int_\Omega c_{1}(1 - c_{1}  -  c_{2}  - \tau )\psi
\end{align}
and
\begin{align}
  \label{defeq2}
 \nonumber  & - \int_0^T \int_\Omega c_{2} \partial_t \psi - \int_\Omega c_{20} \psi(\cdot, 0) = - a_2 \int_0^T \int_\Omega \nabla c_{2} \cdot \nabla \psi  - b_\chi \int_0^T \int_\Omega c_{2} \chi \Delta \psi \\[5pt]
  & - b_\chi \int_0^T \int_\Omega \chi \nabla c_{2 } \cdot \nabla \psi + \int_0^T \int_\Omega \alpha_1(\chi)\frac{c_{1}}{1 + c_{1}} \psi - \int_0^T \int_\Omega \alpha_2(\chi)\frac{c_{2}}{1 + c_{2}} \psi
\end{align}
and
\begin{align}
  \label{defeq3}
 - \int_0^T \int_\Omega \chi \partial_t \psi - \int_\Omega \chi_{0} \psi(\cdot, 0) = - D_\chi \int_0^T \int_\Omega \nabla \chi \cdot \nabla \psi - a_\chi \int_0^T \int_\Omega c_{1}\chi \psi - a_\chi \int_0^T \int_\Omega c_{2}\chi \psi + \int_0^T \int_\Omega F(\chi)
\end{align}
as well as
\begin{align}
  \label{defeq4}
   - \int_0^T \int_\Omega \tau \partial_t \psi - \int_\Omega \tau_{0} \psi(\cdot, 0) = - \delta \int_0^T \int_\Omega \tau c_{1} \psi - \mu \int_0^T \int_\Omega \tau \psi - \int_0^T \int_\Omega \dfrac{c_{2}}{1 + c_{2}} \psi
\end{align}
for all $\psi \in C^\infty_0(\bar{\Omega} \times [0, T))$ with $\frac{\partial \psi}{\partial \nu} = 0$ on $\partial \Omega \times [0, T)$. If the quadruple $(c_1, c_2, \chi, \tau)$ is a weak solution to \eqref{model}-\eqref{initial_conditions} in $\Omega \times (0, T)$ for all $T > 0$, then it is referred to as a global weak solution.
\end{Definition}

\noindent
Our main result asserts that problem \eqref{model}-\eqref{initial_conditions} admits a global weak solution:

\begin{Theorem}
  \label{main_theorem}
Let $n \leq 3$ and let $\Omega \subset \mathbb{R}^n$ be a bounded domain with smooth boundary. Assume that \eqref{main_ic_assumptions} holds, that $\alpha_i$ for $i \in \{1, 2\}$ satisfy \eqref{alpha_def}, and that $F$ satisfies \eqref{F_assump}. Then, problem \eqref{model}--\eqref{initial_conditions} admits at least one global weak solution in the sense of Definition \ref{solution_definition}.
\end{Theorem}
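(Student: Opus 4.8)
The plan is to realize the weak solution as a limit of classical solutions to a family of non-degenerate regularized problems, following the three-step scheme announced in Sections~\ref{sec:approx-probl}--\ref{sec:limits}: solve regularized problems, derive a priori bounds uniform in the regularization parameter, and pass to the limit by compactness.

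First I would introduce (Section~\ref{sec:approx-probl}) a regularized system indexed by $\eps>0$, the natural choices being to endow the $\tau$-equation with an artificial diffusion $\eps\Delta\tau$ (so that the degenerate parabolic--ODE coupling becomes genuinely parabolic), to replace the taxis densities $c_1,c_2$ by the saturated expressions $\frac{c_1}{1+\eps c_1}$, $\frac{c_2}{1+\eps c_2}$, and possibly to mollify $\nabla\tau$ and $\nabla\chi$. For each fixed $\eps$ the nonlinearities are bounded and Lipschitz, so local-in-time classical solvability follows from a Banach/Schauder fixed-point argument together with parabolic $L^p$- and Schauder-theory, and global existence (Section~\ref{sec:GE_approx}) follows once $\eps$-dependent $L^\infty$-bounds are available.

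Next I would collect the $\eps$-uniform a priori estimates (Section~\ref{sec:entropy}). Nonnegativity of all four components is preserved by the structure of the reaction terms. The $L^\infty$-bound on $\tau$ is immediate from comparison in its ODE: since $\frac{c_2}{1+c_2}\le 1$ and the remaining contributions $-\delta c_1\tau-\mu\tau$ are dissipative, one gets $0\le\tau\le\max\{\|\tau_0\|_{L^\infty},1/\mu\}$. Testing the linear $\chi$-equation with $\chi$, and using that the supply $F(\chi)$ in \eqref{F_assump} is a fixed bounded nonnegative function together with the dissipative uptake $-a_\chi(c_1+c_2)\chi$, yields $\chi\in L^\infty(\Omega\times(0,T))\cap L^2(0,T;W^{1,2}(\Omega))$, in particular $\nabla\chi\in L^2$. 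Integrating the $c_1$- and $c_2$-equations in space, the (de)differentiation terms cancel and the logistic damping $-\beta c_1(c_1+c_2+\tau)$ furnishes spatial $L^1$-control of $c_1+c_2$. The core is then the entropy-type estimate obtained by testing the $c_i$-equations with $\log c_i$ (or $\log(1+c_i)$): this produces the dissipation $\int\frac{|\nabla c_1|^2}{c_1}$, $\int\frac{|\nabla c_2|^2}{c_2}$, against which the chemotaxis term reduces to $b_\chi\int\frac{\nabla\chi\cdot\nabla c_2}{1+c_2}$ and is absorbed by Young's inequality using $\nabla\chi\in L^2$ and $\frac{c_2}{(1+c_2)^2}\le\frac14$. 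A Hölder interpolation between the entropy dissipation and the available integrability bounds on $c_1,c_2$ finally upgrades to the claimed fractional gradient regularity $\nabla c_1\in L^{4/3}$ and $\nabla c_2\in L^{5/4}$.

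The haptotaxis coupling of $c_1$ to the \emph{non-diffusive} $\tau$ is where I expect the main obstacle. The logarithmic test turns the haptotaxis contribution into $b_\tau\int\nabla\tau\cdot\nabla c_1$, whose absorption requires $\nabla\tau\in L^2$; but $\nabla\tau$ has no dissipation of its own, and differentiating the $\tau$-equation shows that $\nabla\tau(t)$ is governed by a linear transport-ODE driven by $\nabla c_1$ and $\nabla c_2$, so a bound on $\nabla\tau$ feeds back into the very dissipation one is trying to control --- the estimates interlock. I would break this loop by the weighted substitution $v:=c_1\exp(-\frac{b_\tau}{a_1}\tau)$, under which the flux $a_1\nabla c_1-b_\tau c_1\nabla\tau$ becomes $a_1 e^{\frac{b_\tau}{a_1}\tau}\nabla v$, i.e.\ a uniformly elliptic weighted diffusion (the weight is bounded above and below because $\tau\in L^\infty$); the $c_1$-dynamics thereby lose their explicit taxis term, and a weighted entropy/energy estimate for $v$, combined with a Gronwall closure against the transport bound for $\nabla\tau$, yields the desired uniform gradient control. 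With all $\eps$-uniform bounds in hand, I would invoke the Aubin--Lions--Simon lemma --- bounding $\partial_t c_i,\partial_t\chi,\partial_t\tau$ in suitable negative-order spaces via the equations --- to extract a subsequence along which $c_1,c_2,\chi,\tau$ converge strongly in $L^2(\Omega\times(0,T))$ while the gradients converge weakly. Strong convergence of the cells and of $\tau,\chi$ then identifies every nonlinear limit --- the products $c_1\nabla\tau$, $\chi\nabla c_2$, $c_2\chi$, the saturating terms $\frac{c_i}{1+c_i}$, and the regularized taxis coefficients (which revert to $c_1,c_2$ as $\eps\to0$) --- so that the weak identities \eqref{defeq1}--\eqref{defeq4} pass to the limit (Section~\ref{sec:limits}); nonnegativity survives the $L^2$-convergence, completing the construction.
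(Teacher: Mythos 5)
Your overall scaffolding (regularize, derive $\varepsilon$-uniform entropy estimates, Aubin--Lions, pass to the limit) matches the paper's, and your regularization of the $\tau$-equation by $\varepsilon\Delta\tau_\varepsilon$ is exactly what the paper does in \eqref{sys1} (the paper damps the cell equations by $-\varepsilon c_{i\varepsilon}^\theta$ rather than saturating the taxis fluxes, but that is a legitimate variation). However, the two steps where the real difficulties sit both fail as written.

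First, the chemotaxis term. After testing the $c_2$-equation with $\ln c_{2\varepsilon}$, the taxis contribution is $b_\chi\int_\Omega\nabla\chi_\varepsilon\cdot\nabla c_{2\varepsilon}$, and absorbing it into the dissipation $a_2\int_\Omega |\nabla c_{2\varepsilon}|^2/c_{2\varepsilon}$ by Young's inequality costs a term $\tfrac{b_\chi^2}{2a_2}\int_\Omega c_{2\varepsilon}|\nabla\chi_\varepsilon|^2$, which $\nabla\chi\in L^2$ alone cannot control. Your bound $\tfrac{c_2}{(1+c_2)^2}\le\tfrac14$ only helps if the flux itself is saturated: with the $\varepsilon$-dependent saturation $\tfrac{c_2}{1+\varepsilon c_2}$ (which you need so that the limit equation is the one in \eqref{model}), the corresponding constant degenerates like $\tfrac{1}{4\varepsilon}$, while with an $\varepsilon$-independent saturation you prove existence for a different PDE than \eqref{model}. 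The paper resolves this by integrating by parts the other way, $b_\chi\int_\Omega\nabla\chi_\varepsilon\cdot\nabla c_{2\varepsilon}=-b_\chi\int_\Omega c_{2\varepsilon}\Delta\chi_\varepsilon$ (Lemma \ref{elemmac2}), paying with $\int_\Omega|\Delta\chi_\varepsilon|^2$, which is in turn dissipated by testing the $\chi$-equation with $-\Delta\chi_\varepsilon$ (Lemma \ref{chi_lemma1}); the $\int_\Omega c_{1\varepsilon}^2$ and $\int_\Omega c_{2\varepsilon}^2$ terms this produces are then handled by the logistic space-time bound \eqref{c1_bound1} and by Gagliardo--Nirenberg against the $c_2$-dissipation inside the combined functional of Lemma \ref{entropy_lemma}. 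Nothing in your sketch plays the role of the $|\Delta\chi_\varepsilon|^2$ estimate.

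Second, the haptotaxis term. You correctly identify the interlocking of the $\nabla\tau$ and $\nabla c_1$ bounds, but the exponential substitution $v=c_1e^{-b_\tau\tau/a_1}$ does not break the loop here. To recover $\nabla c_1\in L^{4/3}$ and to pass to the limit in $c_1\nabla\tau$ you still need a uniform $L^2$ bound on $\nabla\tau$; your proposed source, Gronwall on the differentiated $\tau$-equation, is driven by $\tau\nabla c_1$ and hence requires $\nabla c_1\in L^1(0,T;L^2(\Omega))$, which is exactly what is not available (the entropy dissipation only yields $|\nabla c_{1\varepsilon}|^2/c_{1\varepsilon}\in L^1$, hence $\nabla c_{1\varepsilon}\in L^{4/3}$). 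Moreover, the transformed equation carries the zeroth-order term $-\tfrac{b_\tau}{a_1}v\,\partial_t\tau$, whose piece $\tfrac{\delta b_\tau}{a_1}c_{1\varepsilon}\tau_\varepsilon v$ generates, upon testing, cubic terms of the same order as the logistic damping, and absorbing them requires a smallness relation between $\delta b_\tau\tau_\ast/a_1$ and $\beta$ that Theorem \ref{main_theorem} does not assume. The paper never estimates this cross term at all: in Lemma \ref{tau_lemma}, the degradation $-\delta c_{1\varepsilon}\tau_\varepsilon$ places $+\delta\int_\Omega\nabla\tau_\varepsilon\cdot\nabla c_{1\varepsilon}$ on the favorable side of the Fisher-information inequality \eqref{tau_ent}, and in Lemma \ref{entropy_lemma} the weights $\tfrac{a_2\delta}{4b_\tau}$ (on the $c_1$-entropy \eqref{c1entro_ineq1}) and $\tfrac{a_2}{4}$ (on \eqref{tau_ent}) are chosen so that this term cancels $b_\tau\int_\Omega\nabla\tau_\varepsilon\cdot\nabla c_{1\varepsilon}$ identically. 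This exact cancellation, which exploits the model-specific structure that the haptotactic population directly degrades its own attractant, is the key idea your proposal is missing; without it, or a genuine substitute, the a priori estimates do not close.
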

In the sequel me make the following notations and conventions:
\begin{itemize}
  \item The integrals $\int_\Omega f(x) dx$ are abbreviated as $\int_\Omega f(x) $.
  \item The sequentiality of the constants $C_i, i = 1, 2, 3, \ldots$ holds only within the lemma/theorem and its proof in which the constants are used. The sequence restarts once the proof is over.
 \end{itemize}
 
 \begin{Remark} System \eqref{model} is a haptotaxis-chemotaxis model with indirect signal production of the haptotactic signal (which is expressed by another population than that performing haptotaxis) and direct degradation of the chemotactic one. There are several models featuring chemotaxis-chemotaxis or chemotaxis-haptotaxis with indirect signal production. Thereby, the tactic behavior is concentrated on one of the interacting populations, see e.g.,  \cite{Chen2024,Mohan2026,surulescu2021does,Wang2023} and can thus be assigned to the multiple taxis models reviewed in \cite{Kolbe2021}, or is distributed among the populations, e.g.,  \cite{Mishra2022,Wu2025,Li2020}. Our model belongs to the latter category, however differing from previous ones not only by the real-world  problem it addresses, but also by the combination of haptotaxis and chemotaxis, which here are, moreover, both of the attractive type. Global existence of solutions is typically ensured for models with indirect signal production, avoiding blow-up often encountered in models where the tactic population is directly expressing its own tactic signal. In this respect our model is no exception.   
 	\end{Remark}

\section{Approximate problems}\label{sec:approx-probl}
In order to construct weak solutions for \eqref{model}, \eqref{boundary_conditions} and \eqref{initial_conditions} by an approximation procedure, we introduce the following regularized problems:
\begin{equation}
  \label{sys1}
  \begin{cases}
    \partial_t c_{1\varepsilon} = a_1\Delta c_{1\varepsilon} - b_{\tau} \nabla \cdot \left( c_{1\varepsilon}  \nabla \tau_\varepsilon\right) - \alpha_1(\chi_\varepsilon)  \frac{c_{1\varepsilon}}{1 + c_{1\varepsilon}}   + \alpha_2(\chi_\varepsilon) \frac{c_{2\varepsilon}}{1 + c_{2\varepsilon}} \\[5pt]
    \hspace{5cm}+ \beta c_{1\varepsilon}(1 - c_{1\varepsilon} - c_{2\varepsilon} - \tau_\varepsilon) - \varepsilon c_{1\varepsilon}^\theta ,    &   x \in \Omega,\  t > 0,  \\[5pt]
    \partial_t c_{2\varepsilon}  = a_2\Delta c_{2\varepsilon} - b_\chi \nabla \cdot \left( c_{2\varepsilon} \nabla \chi_\varepsilon\right) + \alpha_1(\chi_\varepsilon) \frac{c_{1\varepsilon}}{1 + c_{1\varepsilon}}-\alpha_2(\chi_\varepsilon)  \frac{c_{2\varepsilon}}{1 + c_{2\varepsilon}} - \varepsilon c_{2\varepsilon}^\theta,    &    x \in \Omega,\  t > 0,           \\[5pt]
    \partial_t \chi_\varepsilon = D_\chi \Delta \chi_\varepsilon -  a_\chi  (c_{1\varepsilon} + c_{2\varepsilon})\chi_\varepsilon + F_\varepsilon(\chi_\varepsilon),                                        &      x \in \Omega,\ t > 0,           \\[5pt]
    \partial_t \tau_\varepsilon = \varepsilon \Delta \tau_\varepsilon - \delta c_{1\varepsilon}  \tau_\varepsilon - \mu \tau_\varepsilon +  \frac{c_{2\varepsilon}}{1 + c_{2\varepsilon}}, &   x \in \Omega,\ t > 0,\\[5pt]
   \partial_\nu c_{1\varepsilon} = \partial_\nu c_{2\varepsilon} = \partial_\nu \chi_\varepsilon = \partial_\nu \tau_\varepsilon = 0,               &  x \in \partial \Omega,\ t > 0 , \\[5pt]
   c_{1\varepsilon}(x, 0) = c_{10\varepsilon}(x),~ c_{2\varepsilon}(x, 0) = c_{20\varepsilon}(x),~ \chi_{\varepsilon}(x, 0) = \chi_{0\varepsilon}(x),~ \tau_{\varepsilon}(x, 0) = \tau_{0\varepsilon}(x),                     &  x \in \Omega,\ 
  \end{cases}
\end{equation}
for $\varepsilon \in (0, 1)$ and $\theta > \max\{2, n\}$. Here, $F_\varepsilon(\chi_\varepsilon)$ denotes the \emph{mollification} of $F(\chi)$ (see \cite[Appendix C: Calculus, particularly C5]{evans2022partial}), defined by
\begin{align*}
    F_\varepsilon := \eta_\varepsilon \ast F.
\end{align*}
Recalling that
\begin{align*}
     \|F(\chi)\|_{L^\infty(\Omega\times(0,\infty))}
   = \frac{\chi_0}{|\Omega|},
\end{align*}
we conclude that there exists a constant $M_\chi > 0$ such that
\begin{align}
\label{mol_bound_short}
   \|F_\varepsilon(\chi_\varepsilon)\|_{L^\infty(\Omega\times(0,\infty))}
   \le M_\chi.
\end{align}
The families of functions $\{c_{10\varepsilon}\}_{\varepsilon \in (0, 1)}, \{c_{20\varepsilon}\}_{\varepsilon \in (0, 1)}, \{\chi_{0\varepsilon}\}_{\varepsilon \in (0, 1)}$ and $\{\tau_{0\varepsilon}\}_{\varepsilon \in (0, 1)}$ satisfy
\begin{equation}
\label{reg_assumptions}
  \begin{cases}
    c_{10\varepsilon}, c_{20\varepsilon}, \chi_{0\varepsilon}, \tau_{0\varepsilon} \in C^3(\bar{\Omega}),                                                                   \\[3pt]
    c_{10\varepsilon} > 0, c_{20\varepsilon} > 0, \chi_{0\varepsilon} > 0, \tau_{0\varepsilon} > 0 ~\text{in}~ \bar{\Omega},                                                \\[3pt]
    \partial_\nu c_{10\varepsilon} = \partial_\nu c_{20\varepsilon} = \partial_\nu \chi_{0\varepsilon} = \partial_\nu \tau_{0\varepsilon} = 0 ~\text{on}~ \partial \Omega,  \\[3pt]
    c_{10\varepsilon} \to c_{10}, c_{20\varepsilon} \to c_{20},  ~\text{in}~ C^0(\bar{\Omega}) ~\text{as}~ \varepsilon \searrow 0,                       \\[3pt]
    \chi_{0\varepsilon}\to \chi_0 ~\text{and}~ \sqrt{\tau_{0\varepsilon}} \to \sqrt{\tau_0} ~\text{in}~ W^{1, 2}(\Omega) \cap C^0(\bar{\Omega}),  ~\text{as}~ \varepsilon \searrow 0.
  \end{cases}
\end{equation}

\section{Global existence for approximate problems}\label{sec:GE_approx}
\begin{Lemma}
\label{lemma_loc_ext}
Assume that \eqref{alpha_def}, \eqref{F_assump}, and \eqref{reg_assumptions} hold true. Then, for every $\varepsilon \in (0, 1)$, there exists $T_{\max, \varepsilon} \in (0, \infty]$ and a collection of positive functions $c_{1\varepsilon}, c_{2\varepsilon}, \chi_\varepsilon,$ and $\tau_\varepsilon$, each belonging to $C^{2,1}(\bar{\Omega} \times [0, T_{\max, \varepsilon}))$, such that the quadruple $(c_{1\varepsilon}, c_{2\varepsilon}, \chi_\varepsilon, \tau_\varepsilon)$ solves \eqref{sys1} classically in $\Omega \times (0, T_{\max, \varepsilon})$.  Moreover, if $T_{\max, \varepsilon} < \infty$, then for all $\vartheta \in (0, 1)$,
\begin{equation}
\label{exten_criteria}
\limsup_{t \nearrow T_{\max, \varepsilon}} \Big\{\|c_{1\varepsilon}(\cdot, t)\|_{C^{2 + \vartheta}(\bar{\Omega})}
+ \|c_{2\varepsilon}(\cdot, t)\|_{C^{2 + \vartheta}(\bar{\Omega})}
+ \|\chi_{\varepsilon}(\cdot, t)\|_{C^{2 + \vartheta}(\bar{\Omega})}
+ \|\tau_{\varepsilon}(\cdot, t)\|_{C^{2 + \vartheta}(\bar{\Omega})} \Big\} = \infty.
\end{equation}
\end{Lemma}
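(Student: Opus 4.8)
The plan is to establish local existence by a Banach fixed-point argument based on the variation-of-constants (Duhamel) representation associated with the Neumann heat semigroups generated by $a_1\Delta$, $a_2\Delta$, $D_\chi\Delta$ and $\varepsilon\Delta$ on $\Omega$. Fixing $\varepsilon\in(0,1)$, I would rewrite each of the four equations of \eqref{sys1} in mild form; for instance
\begin{align*}
c_{1\varepsilon}(t)&=e^{ta_1\Delta}c_{10\varepsilon}-b_\tau\int_0^t e^{(t-s)a_1\Delta}\nabla\!\cdot\!\big(c_{1\varepsilon}\nabla\tau_\varepsilon\big)\,ds\\
&\quad+\int_0^t e^{(t-s)a_1\Delta}g_1(c_{1\varepsilon},c_{2\varepsilon},\chi_\varepsilon,\tau_\varepsilon)\,ds,
\end{align*}
where $g_1$ collects the zero-order reaction terms, and analogously for $c_{2\varepsilon}$, $\chi_\varepsilon$ and $\tau_\varepsilon$. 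Because the taxis terms require control of $\nabla\tau_\varepsilon$ and $\nabla\chi_\varepsilon$, I would set up the fixed-point map $\Phi$ on a closed ball (centred at the time-constant extension of the initial data) in
\[ X_T:=C^0\big([0,T];C^0(\bar{\Omega})\big)^2\times C^0\big([0,T];W^{1,\infty}(\Omega)\big)^2, \]
for $T>0$ to be chosen small.

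The core contraction estimate relies on the smoothing properties of the Neumann heat semigroup, in particular $\|e^{\sigma\Delta}\nabla\!\cdot\! w\|_{L^\infty(\Omega)}\le C_1(1+\sigma^{-1/2})\|w\|_{L^\infty(\Omega)}$ and $\|\nabla e^{\sigma\Delta}w\|_{L^\infty(\Omega)}\le C_2(1+\sigma^{-1/2})\|w\|_{L^\infty(\Omega)}$. The singularity $\sigma^{-1/2}$ is integrable, so the convolution integrals are dominated by $C\,T^{1/2}$ times the relevant norms, which can be made small. All zero-order nonlinearities are locally Lipschitz: the fractions $\alpha_i(z)\frac{c}{1+c}$ are bounded and Lipschitz thanks to \eqref{alpha_def}, the logistic term is polynomial, and $c\mapsto c^\theta$ is smooth on the bounded range attained in the ball (after a harmless smooth extension to be removed once positivity is known). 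The taxis terms are bilinear in $(c_{1\varepsilon},\nabla\tau_\varepsilon)$ resp.\ $(c_{2\varepsilon},\nabla\chi_\varepsilon)$, hence Lipschitz on bounded sets. Consequently $\Phi$ maps the ball into itself and is a contraction for $T$ sufficiently small, yielding a unique local mild solution; the gradient components needed in $X_T$ are produced by the semigroup smoothing together with $\tau_{0\varepsilon},\chi_{0\varepsilon}\in C^3(\bar{\Omega})$ from \eqref{reg_assumptions}.

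Given the mild solution, I would upgrade regularity by a parabolic bootstrap: treating the taxis and reaction terms as inhomogeneities, maximal $L^p$-regularity yields $W^{2,1}_p$ bounds, Sobolev embedding gives Hölder continuity of the solution and its gradient, and then the Hölder continuity of $\alpha_i$ from \eqref{alpha_def} lets me invoke parabolic Schauder theory to conclude $c_{1\varepsilon},c_{2\varepsilon},\chi_\varepsilon,\tau_\varepsilon\in C^{2,1}(\bar{\Omega}\times[0,T_{\max,\varepsilon}))$, so that the solution is classical. Strict positivity follows from the sign structure of the reaction terms: at $c_{1\varepsilon}=0$ the right-hand side of the first equation reduces to $\alpha_2(\chi_\varepsilon)\frac{c_{2\varepsilon}}{1+c_{2\varepsilon}}\ge0$, at $c_{2\varepsilon}=0$ the second reduces to $\alpha_1(\chi_\varepsilon)\frac{c_{1\varepsilon}}{1+c_{1\varepsilon}}\ge0$, while the $\chi_\varepsilon$- and $\tau_\varepsilon$-equations have nonnegative sources and decay terms vanishing at the respective zero level; the parabolic (and strong) maximum principle together with the positive initial data then forces all four components to remain strictly positive.

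Finally, the maximal existence time $T_{\max,\varepsilon}$ and the blow-up criterion \eqref{exten_criteria} are obtained by the usual continuation argument: the solution is extended as long as possible, and if $T_{\max,\varepsilon}<\infty$ while the bracketed $C^{2+\vartheta}$-norm stayed bounded along some sequence $t\nearrow T_{\max,\varepsilon}$, then the solution at a time close to $T_{\max,\varepsilon}$ would provide admissible initial data (of regularity matching \eqref{reg_assumptions}) from which the local result extends the solution past $T_{\max,\varepsilon}$, contradicting maximality. I expect the main obstacle to be the treatment of the two taxis terms $\nabla\!\cdot\!(c_{1\varepsilon}\nabla\tau_\varepsilon)$ and $\nabla\!\cdot\!(c_{2\varepsilon}\nabla\chi_\varepsilon)$ inside the fixed point, since these force the iteration space to carry $L^\infty$-control of $\nabla\tau_\varepsilon$ and $\nabla\chi_\varepsilon$. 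Control of $\nabla\chi_\varepsilon$ is automatic from its parabolic equation, but control of $\nabla\tau_\varepsilon$ is precisely why the artificial diffusion $\varepsilon\Delta\tau_\varepsilon$ was introduced in \eqref{sys1}: it renders the $\tau$-equation parabolic and supplies, via $\|\nabla e^{\sigma\varepsilon\Delta}\cdot\|_{L^\infty}\le C\sigma^{-1/2}$ and the boundedness of its source, the gradient bound needed to close the argument.
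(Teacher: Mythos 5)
Your proposal is correct and is essentially the paper's own argument: the authors simply cite and adapt \cite[Lemma 3.1]{stinner2014global}, which is exactly the Banach fixed-point construction on a mild (Duhamel) formulation in a space carrying $L^\infty$-control of $\nabla\chi_\varepsilon$ and $\nabla\tau_\varepsilon$, followed by a parabolic bootstrap to classical regularity, the strong maximum principle for positivity, and the standard continuation argument for \eqref{exten_criteria}. Your observation that the artificial diffusion $\varepsilon\Delta\tau_\varepsilon$ is what makes the $\tau$-equation amenable to this scheme is also the correct reading of why the regularization in \eqref{sys1} is set up this way.
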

\begin{proof}
By adapting the arguments from \cite[Lemma 3.1]{stinner2014global}, we can easily prove this result. The non-negativity of solution components can then be established using the strong maximum principle.
\end{proof}

\begin{Lemma}
\label{lemmal1c1}
  For all $\varepsilon \in (0, 1)$ the first solution component of \eqref{sys1} satisfies
  \begin{equation}
    \label{l1c1}
    \int_\Omega c_{1\varepsilon}(\cdot, t) \leq \max\left\{\sup_{\varepsilon \in (0, 1)}\int_\Omega c_{10\varepsilon}, \frac{|\Omega|}{2} \left(1 + \sqrt{\frac{4 M_{\alpha_2}}{\beta}}\right)\right\} =: M_1 \quad \text{for all}~ t \in (0,T_{\max, \varepsilon}).
  \end{equation}
  Additionally, there exists a constant $C > 0$ such that
  \begin{equation}
  \label{c1_bound1}
    \int_t^{t + 1} \int_\Omega c_{1\varepsilon}^2 \leq C \quad \text{for all} ~ t \in (0, T_{\max, \varepsilon} - 1)
  \end{equation}
  and
   \begin{equation}
   \label{c1_bound2}
    \varepsilon \int_t^{t + 1} \int_\Omega c_{1\varepsilon}^\theta \leq C  \quad \text{for all} ~ t \in (0, T_{\max, \varepsilon} - 1).
  \end{equation}
\end{Lemma}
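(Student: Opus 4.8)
The plan is to test the first equation of \eqref{sys1} against the constant function $1$, i.e.\ to integrate it over $\Omega$. Writing $y(t):=\int_\Omega c_{1\varepsilon}(\cdot,t)$, the diffusion term $a_1\int_\Omega\Delta c_{1\varepsilon}$ and the haptotaxis term $b_\tau\int_\Omega\nabla\cdot(c_{1\varepsilon}\nabla\tau_\varepsilon)$ both vanish after integration by parts, thanks to the no-flux conditions $\partial_\nu c_{1\varepsilon}=\partial_\nu\tau_\varepsilon=0$ imposed in \eqref{sys1}. Among the reaction terms I would discard the nonpositive contributions $-\int_\Omega\alpha_1(\chi_\varepsilon)\tfrac{c_{1\varepsilon}}{1+c_{1\varepsilon}}$ and $-\varepsilon\int_\Omega c_{1\varepsilon}^\theta$, bound the source by $\int_\Omega\alpha_2(\chi_\varepsilon)\tfrac{c_{2\varepsilon}}{1+c_{2\varepsilon}}\le M_{\alpha_2}|\Omega|$ using \eqref{alpha_def} together with $\tfrac{c_{2\varepsilon}}{1+c_{2\varepsilon}}\le 1$, and exploit $c_{2\varepsilon},\tau_\varepsilon\ge 0$ (from \cref{lemma_loc_ext}) to estimate the logistic term by $\beta\int_\Omega c_{1\varepsilon}(1-c_{1\varepsilon})$. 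This produces the master differential inequality
\begin{align*}
y'(t) + \beta\int_\Omega c_{1\varepsilon}^2 + \varepsilon\int_\Omega c_{1\varepsilon}^\theta \le M_{\alpha_2}|\Omega| + \beta\, y(t),
\end{align*}
which is the common source of all three claimed estimates.

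For the $L^1$ bound \eqref{l1c1} I would drop the nonnegative dissipation terms $\beta\int_\Omega c_{1\varepsilon}^2$ is instead kept and the $\varepsilon$-term discarded, and apply the Cauchy--Schwarz (equivalently Jensen) inequality $\int_\Omega c_{1\varepsilon}^2\ge |\Omega|^{-1}y^2$ to reach the scalar logistic inequality $y'\le M_{\alpha_2}|\Omega|+\beta y-\tfrac{\beta}{|\Omega|}y^2$. Its right-hand side is a downward parabola in $y$ possessing a unique positive root $y_\ast$; an elementary comparison argument for scalar ODEs then gives $y(t)\le\max\{y(0),y_\ast\}$ for all $t\in(0,T_{\max,\varepsilon})$, uniformly in $\varepsilon$ because $y(0)=\int_\Omega c_{10\varepsilon}$ is controlled through \eqref{reg_assumptions}. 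Identifying $y_\ast$ with the second entry and the initial mass with the first entry in the maximum defining $M_1$ closes \eqref{l1c1}.

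To obtain the space--time estimates \eqref{c1_bound1} and \eqref{c1_bound2} I would this time retain \emph{both} the quadratic and the $\theta$-th order dissipation terms on the left-hand side. Since the $L^1$ bound just established makes the right-hand side $M_{\alpha_2}|\Omega|+\beta y(t)\le M_{\alpha_2}|\Omega|+\beta M_1=:K$ a fixed constant, integrating the master inequality over any window $[t,t+1]\subset(0,T_{\max,\varepsilon})$ yields
\begin{align*}
\beta\int_t^{t+1}\!\!\int_\Omega c_{1\varepsilon}^2 + \varepsilon\int_t^{t+1}\!\!\int_\Omega c_{1\varepsilon}^\theta \le K + y(t) - y(t+1) \le K + M_1,
\end{align*}
where I used $y(t)\le M_1$ and $y(t+1)\ge 0$. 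As both integrals on the left are nonnegative, each is separately controlled by $K+M_1$ (after dividing the first by $\beta$), giving \eqref{c1_bound1} and \eqref{c1_bound2} with a constant independent of $\varepsilon$ and of $t$.

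I do not expect a genuine analytic obstacle here: at the regularized level the solution components are classical on $[0,T_{\max,\varepsilon})$ by \cref{lemma_loc_ext}, so every manipulation above is legitimate. The only points demanding care are the correct vanishing of the two boundary integrals (which hinges on the flux conditions being imposed in \eqref{sys1}) and the careful bookkeeping of signs, so that the dissipative $\beta\int_\Omega c_{1\varepsilon}^2$ and $\varepsilon\int_\Omega c_{1\varepsilon}^\theta$ terms are kept precisely when proving \eqref{c1_bound1}--\eqref{c1_bound2} and relaxed only for \eqref{l1c1}; one must also check that the resulting constants are $\varepsilon$- and $t$-independent, which follows from the uniform control of the initial mass in \eqref{reg_assumptions}.
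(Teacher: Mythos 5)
Your proposal is correct and follows essentially the same route as the paper: integrate the first equation over $\Omega$, discard the nonpositive reaction terms, bound the source by $M_{\alpha_2}|\Omega|$, apply Cauchy--Schwarz to reach the scalar logistic inequality and an ODE comparison for \eqref{l1c1}, then return to the master inequality with the dissipative terms retained and integrate over $(t,t+1)$ for \eqref{c1_bound1}--\eqref{c1_bound2}. No meaningful differences from the paper's argument.
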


\begin{proof}
Integrating the first equation of \eqref{sys1} over $\Omega$ results in
  \begin{align}
  \label{l1c11}
    \nonumber \dfrac{d}{dt}\int_\Omega c_{1\varepsilon} = & - \int_\Omega \alpha_1(\chi_\varepsilon) \dfrac{c_{1\varepsilon}}{1 + c_{1\varepsilon}} + \int_\Omega \alpha_2(\chi_\varepsilon) \dfrac{c_{2\varepsilon}}{1 + c_{2\varepsilon}} + \beta \int_\Omega c_{1\varepsilon} - \beta \int_\Omega c_{1\varepsilon}^2 - \beta \int_\Omega c_{1\varepsilon} c_{2\varepsilon}  \\[5pt]
    & - \beta \int_\Omega c_{1\varepsilon} \tau_\varepsilon - \varepsilon \int_\Omega c_{1\varepsilon}^\theta  \quad \text{for all}~ t \in (0, T_{\max, \varepsilon}).
  \end{align}
  Using \eqref{alpha_def}, the nonnegativity of solution components, and the fact that $\frac{c_{i\varepsilon}}{1 + c_{i\varepsilon}} < 1$ for $c_{i\varepsilon} \geq 0, i \in \{1, 2\}$, we can have
  \begin{equation*}
    \label{l1c12}
    \dfrac{d}{dt} \int_\Omega c_{1\varepsilon} \leq M_{\alpha_2} |\Omega| + \beta \int_\Omega c_{1\varepsilon} - \beta \int_\Omega c_{1\varepsilon}^2 \quad \text{for all}~ t \in (0, T_{\max, \varepsilon}).
  \end{equation*}
  The Cauchy-Schwartz inequality ensures that $\int_\Omega c_{1\varepsilon}^2 \geq \frac{1}{|\Omega|} (\int_\Omega c_{1\varepsilon})^2$, hence
  \begin{equation}
    \label{l1c13}
    \dfrac{d}{dt} \int_\Omega c_{1\varepsilon} \leq M_{\alpha_2} |\Omega| + \beta \int_\Omega c_{1\varepsilon} - \frac{\beta}{|\Omega|} \left(\int_\Omega c_{1\varepsilon}\right)^2  \quad \text{for all}~ t \in (0, T_{\max, \varepsilon}).
  \end{equation}
 Applying an ODE comparison principle to \eqref{l1c13} results in
  \begin{equation*}
    \|c_{1\varepsilon}(\cdot, t)\|_{L^1(\Omega)} \leq \max\left\{\sup_{\varepsilon \in (0, 1)}\|c_{10\varepsilon}\|_{L^1(\Omega)}, \frac{|\Omega|}{2} \left(1 + \sqrt{\frac{4 M_{\alpha_2}}{\beta}}\right)\right\} \quad \text{for all}~ t \in (0, T_{\max, \varepsilon}),
  \end{equation*}
  thus yielding \eqref{l1c1}. Then \eqref{l1c11} implies
  \begin{align}
  \label{newc1l1}
    \frac{d}{dt}  \int_\Omega c_{1\varepsilon} + \beta \int_\Omega c_{1\varepsilon}^2 + \varepsilon \int_\Omega c_{1\varepsilon}^\theta & \leq M_{\alpha_2} |\Omega| + \beta  \int_\Omega c_{1\varepsilon}   \leq M_{\alpha_2} |\Omega| + \beta M_1\quad \text{for all}~ t \in (0, T_{\max, \varepsilon}).
  \end{align}
  Since $\beta > 0$ and $\varepsilon > 0$ we can derive \eqref{c1_bound1} and \eqref{c1_bound2} by integrating \eqref{newc1l1} over $(t, t + 1)$.

\end{proof}

\begin{Lemma}
\label{lemmal2c2}
For all $\varepsilon \in (0, 1)$, there exists a $C(T) > 0$ such that the second solution component of \eqref{sys1} satisfies
  \begin{equation}
    \label{l1c2}
    \int_\Omega c_{2\varepsilon}(\cdot, t) \leq C(T) \quad \text{for all}~ t \in (0, \widehat{T}_\varepsilon),
  \end{equation}
  and
   \begin{equation}
   \label{c2_bound2}
    \varepsilon \int_0^{\widehat{T}_\varepsilon} \int_\Omega c_{2\varepsilon}^\theta \leq C(T)
  \end{equation}
  where $\widehat{T}_\varepsilon := \min\{T, T_{\max, \varepsilon}\}$ for $T > 0$.
\end{Lemma}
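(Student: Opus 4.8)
The plan is to mirror the first part of the argument in \cref{lemmal1c1}, now applied to the second equation of \eqref{sys1}. First I would integrate that equation over $\Omega$. The diffusion term disappears because of the Neumann condition $\partial_\nu c_{2\varepsilon}=0$, and, crucially, the chemotaxis term vanishes as well, since $\int_\Omega \nabla\cdot(c_{2\varepsilon}\nabla\chi_\varepsilon) = \int_{\partial\Omega} c_{2\varepsilon}\,\partial_\nu\chi_\varepsilon = 0$ by virtue of $\partial_\nu\chi_\varepsilon=0$ on $\partial\Omega$. This leaves the mass balance
\[
\frac{d}{dt}\int_\Omega c_{2\varepsilon} = \int_\Omega \alpha_1(\chi_\varepsilon)\frac{c_{1\varepsilon}}{1+c_{1\varepsilon}} - \int_\Omega \alpha_2(\chi_\varepsilon)\frac{c_{2\varepsilon}}{1+c_{2\varepsilon}} - \varepsilon\int_\Omega c_{2\varepsilon}^\theta .
\]

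Next I would estimate the gain term. By the boundedness assumption in \eqref{alpha_def} together with $\frac{c_{1\varepsilon}}{1+c_{1\varepsilon}}<1$, the first integral is at most $M_{\alpha_1}|\Omega|$; the second integral is nonnegative and can be discarded, while I deliberately keep the dissipative contribution $\varepsilon\int_\Omega c_{2\varepsilon}^\theta$ on the left (this is what will later produce \eqref{c2_bound2}). This gives the differential inequality
\[
\frac{d}{dt}\int_\Omega c_{2\varepsilon} + \varepsilon\int_\Omega c_{2\varepsilon}^\theta \le M_{\alpha_1}|\Omega| .
\]
Integrating over $(0,t)$ for $t\in(0,\widehat T_\varepsilon)$ and using $\widehat T_\varepsilon\le T$ then yields
\[
\int_\Omega c_{2\varepsilon}(\cdot,t) + \varepsilon\int_0^t\int_\Omega c_{2\varepsilon}^\theta \le \int_\Omega c_{20\varepsilon} + M_{\alpha_1}|\Omega|\,T .
\]
Since $c_{20\varepsilon}\to c_{20}$ in $C^0(\bar\Omega)$ by \eqref{reg_assumptions}, the initial mass $\int_\Omega c_{20\varepsilon}$ is bounded uniformly in $\varepsilon$, so the right-hand side defines an admissible $C(T)$ and both \eqref{l1c2} and \eqref{c2_bound2} follow immediately.

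Unlike the situation in \cref{lemmal1c1}, there is no logistic quadratic sink in the $c_{2\varepsilon}$ equation capable of absorbing the differentiation source $\alpha_1(\chi_\varepsilon)\frac{c_{1\varepsilon}}{1+c_{1\varepsilon}}$. Consequently no time-uniform bound is available, and the linear-in-$T$ growth—hence the $T$-dependence of $C(T)$ and the finite time horizon in \eqref{c2_bound2}—is intrinsic rather than an artefact of the estimate. I do not expect a genuine obstacle here: the only two points requiring attention are the vanishing of the taxis boundary term (which hinges on the homogeneous Neumann condition for $\chi_\varepsilon$) and the choice to retain, rather than drop, the $\varepsilon c_{2\varepsilon}^\theta$ term so as to capture the space-time integrability stated in \eqref{c2_bound2}.
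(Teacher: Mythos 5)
Your proposal is correct and follows essentially the same route as the paper: integrate the second equation over $\Omega$, bound the source by $M_{\alpha_1}|\Omega|$ using \eqref{alpha_def}, and integrate in time over $(0,\widehat T_\varepsilon)$. The only (harmless) difference is that you retain the $\varepsilon\int_\Omega c_{2\varepsilon}^\theta$ term on the left from the outset and obtain both estimates in a single integration, whereas the paper first derives the $L^1$ bound and then re-integrates the mass identity to recover \eqref{c2_bound2}.
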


\begin{proof}
Integrating the second equation of \eqref{sys1} over $\Omega$ yields
  \begin{equation}
  \label{l1c21}
    \dfrac{d}{dt}\int_\Omega c_{2\varepsilon} = \int_\Omega \alpha_1(\chi_\varepsilon) \dfrac{c_{1\varepsilon}}{1 + c_{1\varepsilon}} - \int_\Omega \alpha_2(\chi_\varepsilon) \dfrac{c_{2\varepsilon}}{1 + c_{2\varepsilon}} - \varepsilon \int_\Omega c_{2\varepsilon}^\theta \quad \text{for all}~ t \in (0, T_{\max, \varepsilon}).
  \end{equation}
  Using \eqref{alpha_def}, the non-negativity of solution components, and the fact that $\frac{c_{i\varepsilon}}{1 + c_{i\varepsilon}} < 1$ for $c_{i\varepsilon} \geq 0, i \in \{1, 2\}$, we can have
  \begin{equation}
  \label{l1c22}
    \dfrac{d}{dt}\int_\Omega c_{2\varepsilon} \leq M_{\alpha_1}|\Omega| \quad \text{for all}~ t \in (0, T_{\max, \varepsilon})
  \end{equation}
    Integrating \eqref{l1c22} over the time interval $(0, \widehat{T}_\varepsilon)$  results in
   \begin{equation*}
     \|c_{2\varepsilon}(\cdot, t)\|_{L^1(\Omega)} \leq \max \left\{\sup_{\varepsilon \in (0, 1)} \int_\Omega c_{20\varepsilon} + M_{\alpha_1}|\Omega|\widehat{T}_\varepsilon\right\}
   \end{equation*}
   thus yielding \eqref{l1c2}. In view of \eqref{l1c2}, integrating \eqref{l1c21} over  $(0, \widehat{T}_\varepsilon)$ yields \eqref{c2_bound2}.
\end{proof}


\begin{Lemma}
    \label{lemma_chi}
    For all $\varepsilon \in (0, 1)$, there exists a $C(T) > 0$ and $\chi_\infty > 0$ such that the third solution component of \eqref{sys1} satisfies
  \begin{equation}
    \label{l1c2chi}
    \int_\Omega \chi_{\varepsilon}(\cdot, t) \leq C(T) \quad \text{for all}~ t \in (0, \widehat{T}_\varepsilon),
  \end{equation}
  and
   \begin{equation}
   \label{chi_bound2}
   \|\chi(\cdot, t)\|_{L^\infty(\Omega)} \leq \chi_\infty \quad \text{for all}~ t \in (0, \widehat{T}_\varepsilon)
  \end{equation}
  where $\widehat{T}_\varepsilon := \min\{T, T_{\max, \varepsilon}\}$ for $T > 0$.
\end{Lemma}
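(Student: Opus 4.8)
The plan is to obtain \eqref{l1c2chi} by a mass-type integration and \eqref{chi_bound2} by a parabolic comparison argument, both resting on the nonnegativity of the components furnished by \cref{lemma_loc_ext} and on the uniform bound \eqref{mol_bound_short} for the mollified source.

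First I would integrate the third equation of \eqref{sys1} over $\Omega$. Since $\partial_\nu \chi_\varepsilon = 0$ on $\pO$, the divergence theorem eliminates the diffusion contribution, $D_\chi\io\Delta\chi_\varepsilon = 0$. By \cref{lemma_loc_ext} the functions $c_{1\varepsilon}, c_{2\varepsilon}, \chi_\varepsilon$ are nonnegative, so the absorption term $-a_\chi\io(c_{1\varepsilon}+c_{2\varepsilon})\chi_\varepsilon$ is nonpositive and may be discarded. Invoking \eqref{mol_bound_short} then gives
\begin{align*}
\frac{d}{dt}\io\chi_\varepsilon \;\le\; \io F_\varepsilon(\chi_\varepsilon) \;\le\; M_\chi\,|\Omega|.
\end{align*}
Integrating in time over $(0,t)\subset(0,\widehat T_\varepsilon)$ and using that $\|\chi_{0\varepsilon}\|_{L^1(\Omega)}$ is uniformly bounded through the convergence $\chi_{0\varepsilon}\to\chi_0$ in \eqref{reg_assumptions} yields $\io\chi_\varepsilon(\cdot,t)\le \io\chi_{0\varepsilon} + M_\chi|\Omega|\,t$, which is the claimed bound $C(T)$.

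For \eqref{chi_bound2} I would read the $\chi_\varepsilon$-equation as the linear parabolic problem $\partial_t\chi_\varepsilon - D_\chi\Delta\chi_\varepsilon + a_\chi(c_{1\varepsilon}+c_{2\varepsilon})\chi_\varepsilon = F_\varepsilon(\chi_\varepsilon)$ with nonnegative zeroth-order coefficient and a source bounded by $M_\chi$. The spatially homogeneous function $\overline\chi(t):=\|\chi_{0\varepsilon}\|_{L^\infty(\Omega)} + M_\chi t$ is a supersolution, since its left-hand side equals $M_\chi + a_\chi(c_{1\varepsilon}+c_{2\varepsilon})\overline\chi \ge M_\chi \ge F_\varepsilon(\chi_\varepsilon)$ (using $\overline\chi\ge0$), it obeys $\partial_\nu\overline\chi = 0$, and $\overline\chi(0)\ge\chi_{0\varepsilon}$; the parabolic comparison principle thus gives $\chi_\varepsilon\le\overline\chi$. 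To upgrade this to the genuinely time-independent constant $\chi_\infty$ indicated by the notation, I would exploit the structure \eqref{F_assump}: because $\mathcal{T}_\chi$ is a finite set, $F$, and hence $F_\varepsilon=\eta_\varepsilon\ast F$, is supported in a bounded time interval $[0,T^\ast]$. Setting $m(t):=\max_{x\in\bom}\chi_\varepsilon(x,t)$ and testing the equation at a spatial maximum, where $\Delta\chi_\varepsilon\le0$ and the absorption term is nonpositive, one gets $m'(t)\le M_\chi$ on $[0,T^\ast]$ and $m'(t)\le0$ for $t>T^\ast$, so $\|\chi_\varepsilon(\cdot,t)\|_{L^\infty(\Omega)}\le \sup_{\varepsilon\in(0,1)}\|\chi_{0\varepsilon}\|_{L^\infty(\Omega)} + M_\chi T^\ast =: \chi_\infty$, uniformly in $t$ and in $\varepsilon$ (the latter again by $\chi_{0\varepsilon}\to\chi_0$ in $C^0(\bom)$).

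The main obstacle is not the analysis but the rigorous justification of the comparison/maximum-principle step for the classical solutions on the possibly finite maximal interval $(0,T_{\max,\varepsilon})$, together with the correct bookkeeping of the time-support of the mollified source $F_\varepsilon$ that turns the a priori $T$-dependent estimate into the global-in-time bound $\chi_\infty$. Once these points are settled, both assertions follow by the elementary integrations above.
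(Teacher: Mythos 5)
Your derivation of \eqref{l1c2chi} is exactly the paper's: integrate the third equation over $\Omega$, drop the nonpositive absorption term, invoke \eqref{mol_bound_short}, and integrate in time to get a bound growing linearly in $\widehat T_\varepsilon\le T$. For \eqref{chi_bound2} you diverge from the paper: the authors apply the variation-of-constants formula, use the order-preservation of the Neumann heat semigroup to discard $-a_\chi(c_{1\varepsilon}+c_{2\varepsilon})\chi_\varepsilon$, and bound $\|e^{D_\chi t\Delta}\chi_{0\varepsilon}\|_{L^\infty(\Omega)}+M_\chi C_1\widehat T_\varepsilon$ via \cite[Lemma 1.3]{winkler2010aggregation}, whereas you construct the explicit spatially homogeneous supersolution $\overline\chi(t)=\|\chi_{0\varepsilon}\|_{L^\infty(\Omega)}+M_\chi t$ and invoke the parabolic comparison principle. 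Both routes are legitimate for the classical solutions of \cref{lemma_loc_ext} and yield the same linear-in-$T$ bound; your comparison argument is the more elementary, the semigroup argument has the advantage of recycling machinery the paper needs anyway for the gradient estimates \eqref{nabchi_bound}. Your final refinement --- upgrading to a genuinely $t$-independent $\chi_\infty$ by arguing that $F_\varepsilon$ has bounded time-support --- is more than the lemma asks for: the statement only requires the bound on $(0,\widehat T_\varepsilon)$ with $\widehat T_\varepsilon\le T$, and indeed the paper's $\chi_\infty$ does depend on $T$ through the term $M_\chi C_1\widehat T_\varepsilon$. That extra step also leans on an interpretation of \eqref{F_assump} (compact time-support of $F$ and of its mollification) that the paper never uses, so you may simply omit it; everything needed for the lemma is already contained in your first two paragraphs.
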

\begin{proof}
The estimate in \eqref{l1c2chi} follows by integrating the third equation in \eqref{sys1} over space and using the nonnegativity of the solution components together with \eqref{mol_bound_short}, which yields
\begin{align}
    \label{uzr}
    \frac{d}{dt} \int_\Omega \chi_\varepsilon \leq M_\chi |\Omega| \quad \text{for all}~ t \in (0, T_{\max, \varepsilon}).
\end{align}
Integrating \eqref{uzr} over the time interval $(0, \widehat{T}_\varepsilon)$ gives
\begin{align*}
    \|\chi_\varepsilon(\cdot, t)\|_{L^1(\Omega)} \leq \max \Bigg\{ \sup_{\varepsilon \in (0, 1)} \int_\Omega \chi_{0\varepsilon}, \; M_{\chi} |\Omega| \widehat{T}_\varepsilon \Bigg\},
\end{align*}
which yields the desired estimate.\\[-1ex]

\noindent
Applying the variation-of-constants formula to the third equation of \eqref{sys1} and using the properties for the Neumann heat semigroup \cite[Lemma 1.3]{winkler2010aggregation} and \eqref{mol_bound_short}, we estimate
\begin{align}
  \label{c2_bound1}
    \nonumber \|\chi_{\varepsilon}(\cdot, t)\|_{L^\infty(\Omega)} & = \sup_{\Omega} \bigg(e^{D_\chi t \Delta }\chi_{0\varepsilon} - \int_0^t e^{(t - s) D_\chi \Delta} \left\{a_\chi  (c_{1\varepsilon}(\cdot, s) + c_{2\varepsilon}(\cdot, s))\chi_\varepsilon(\cdot, s)\right\} ds + \int_0^t e^{(t - s) D_\chi \Delta} F_\varepsilon(\chi_\varepsilon(\cdot, s)) ds \bigg)  \\[5pt]
    \nonumber & \leq \sup_{\Omega} \left(e^{D_\chi t \Delta }\chi_{0\varepsilon}\right) + \sup_{\Omega} \left( \int_0^t e^{(t - s) D_\chi \Delta} F_\varepsilon(\chi_\varepsilon(\cdot, s)) ds\right)  \\[5pt]
    \nonumber & \leq\|e^{D_\chi t \Delta }\chi_{0\varepsilon}\|_{L^\infty(\Omega)} +  \int_0^t \|e^{(t - s) D_\chi \Delta} F_\varepsilon(\chi_\varepsilon(\cdot, s))\|_{L^\infty(\Omega)}  ds  \\[5pt]
    \nonumber & \leq \|e^{D_\chi t \Delta }\chi_{0\varepsilon}\|_{L^\infty(\Omega)} + M_{\chi} C_1 \widehat{T}_\varepsilon \qquad \text{for all}~ t \in (0, \widehat{T}_\varepsilon).
  \end{align}
From the above, we see that there exists a constant $\chi_\infty > 0$ such that
\begin{equation*}
  \|\chi_{\varepsilon}(\cdot, t)\|_{L^\infty(\Omega)} \leq \chi_\infty  \qquad \text{for all}~ t \in (0, \widehat{T}_\varepsilon).
\end{equation*}
\end{proof}
\begin{Lemma}
  \label{ht_bound}
   For all $\varepsilon \in (0, 1)$,  the fourth solution component of \eqref{sys1} satisfies
  \begin{align}
    \label{t_bound} & \|\tau_\varepsilon(\cdot, t)\|_{L^\infty(\Omega)} \leq \frac{r_\ast}{\mu} + \|\tau_{0 \varepsilon }\|_{L^\infty(\Omega)} =: \tau_\ast,\quad \text{for all}~ t \in (0, T_{\max, \varepsilon})
  \end{align}
   where $r\varepsilon:= \frac{c_{2\varepsilon}}{1 + c_{2\varepsilon}}$ and $r_\ast := \|r_\varepsilon\|_{L^\infty(\Omega \times (0, \infty))}$.
\end{Lemma}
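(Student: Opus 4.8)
The plan is to exploit the fact that, once the other three components are regarded as known, the fourth equation of \eqref{sys1} is a \emph{linear} parabolic equation for $\tau_\varepsilon$ whose zeroth-order coefficient has a favourable sign and whose source is uniformly bounded. Indeed, writing the equation as
\[
\partial_t \tau_\varepsilon = \varepsilon \Delta \tau_\varepsilon - (\delta c_{1\varepsilon} + \mu)\tau_\varepsilon + r_\varepsilon, \qquad \partial_\nu \tau_\varepsilon = 0,
\]
I first observe that, by the non-negativity of $c_{2\varepsilon}$ established in Lemma \ref{lemma_loc_ext}, one has $0 \le r_\varepsilon = \frac{c_{2\varepsilon}}{1 + c_{2\varepsilon}} < 1$ pointwise, so that $r_\ast \le 1 < \infty$ and the source is indeed controlled. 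Moreover $c_{1\varepsilon} \ge 0$, hence the coefficient $\delta c_{1\varepsilon} + \mu$ is nonnegative, which is exactly the structure needed for a comparison argument.

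The core of the proof is then a comparison with a spatially homogeneous supersolution. I claim the constant function $w \equiv \tau_\ast = \frac{r_\ast}{\mu} + \|\tau_{0\varepsilon}\|_{L^\infty(\Omega)}$ is a supersolution of the above problem. Since $w$ is constant in $x$, it has $\Delta w = 0$ and satisfies the homogeneous Neumann condition trivially, while
\[
\partial_t w - \varepsilon \Delta w + (\delta c_{1\varepsilon} + \mu) w - r_\varepsilon = (\delta c_{1\varepsilon} + \mu)\tau_\ast - r_\varepsilon \ge \mu \tau_\ast - r_\ast = \mu \|\tau_{0\varepsilon}\|_{L^\infty(\Omega)} \ge 0,
\]
using $\delta c_{1\varepsilon}\tau_\ast \ge 0$ and $r_\varepsilon \le r_\ast$. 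At the initial time $w = \tau_\ast \ge \|\tau_{0\varepsilon}\|_{L^\infty(\Omega)} \ge \tau_{0\varepsilon}(x)$, so the ordering holds at $t = 0$ as well.

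Applying the parabolic comparison principle (legitimate here because $\tau_\varepsilon \in C^{2,1}$ by Lemma \ref{lemma_loc_ext} and the coefficient $c_{1\varepsilon}$ is continuous and bounded on $\bar{\Omega} \times [0, t]$ for each $t < T_{\max,\varepsilon}$) yields $\tau_\varepsilon(x, t) \le \tau_\ast$ throughout $\Omega \times (0, T_{\max,\varepsilon})$. Combined with the non-negativity $\tau_\varepsilon \ge 0$, this gives the asserted bound $\|\tau_\varepsilon(\cdot, t)\|_{L^\infty(\Omega)} \le \tau_\ast$.

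I do not expect any genuine obstacle: the only points requiring a little care are checking that $r_\ast$ is finite (which follows from the saturating form $\frac{c_{2\varepsilon}}{1 + c_{2\varepsilon}}$) and noting that the extra sink term $-\delta c_{1\varepsilon}\tau_\varepsilon$ only helps, since it carries the same sign as the decay $-\mu\tau_\varepsilon$ and can simply be discarded in the supersolution estimate. An equivalent and equally short route would be to compare $\tau_\varepsilon$ with the solution $\bar y(t)$ of the scalar ODE $\bar y' = -\mu \bar y + r_\ast$, $\bar y(0) = \|\tau_{0\varepsilon}\|_{L^\infty(\Omega)}$, whose explicit solution stays below $\max\{\|\tau_{0\varepsilon}\|_{L^\infty(\Omega)}, r_\ast/\mu\} \le \tau_\ast$; this is precisely the computation implicitly encoded in the definition of $\tau_\ast$.
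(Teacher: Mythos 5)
Your proof is correct and follows essentially the same route as the paper: a parabolic comparison argument with a spatially homogeneous supersolution, discarding the favourable sink term $-\delta c_{1\varepsilon}\tau_\varepsilon$. The only (cosmetic) difference is that you compare directly with the constant $\tau_\ast$, whereas the paper uses the time-dependent function $\bar{\tau}(t)=\|\tau_{0\varepsilon}\|_{L^\infty(\Omega)}e^{-\mu t}+\int_0^t e^{-\mu(t-s)}\|r_\varepsilon(\cdot,s)\|_{L^\infty(\Omega)}\,ds\le\tau_\ast$ --- precisely the ODE-solution route you sketch in your closing remark.
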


\begin{proof}
  Let
  \begin{align*}
    \bar{\tau}(t) := \|\tau_{0 \varepsilon}\|_{L^\infty(\Omega)}e^{- \mu t} + \int_0^t e^{-\mu (t - s)} \|r_\varepsilon (\cdot, s)\|_{L^\infty(\Omega)} ds
  \end{align*}
  for $t \in (0, \infty)$. Then $\bar{\tau} \leq \tau_\ast$ on $\bar{\Omega} \times (0, T_{\max, \varepsilon})$ and the following holds true
  \begin{equation*}
    \bar{\tau}_t - \varepsilon \Delta \bar{\tau} + \delta c_{1 \varepsilon} \bar{\tau} + \mu \bar{\tau} - r(x, t) \geq \bar{\tau}_t + \mu \bar{\tau} - \|r(\cdot, t)\|_{L^\infty(\Omega)} \geq 0
  \end{equation*}
  on $\bar{\Omega} \times (0, T_{\max, \varepsilon})$, owing to the non-negativity of $c_{1\varepsilon}$ and $\tau_\varepsilon$. Applying an ODE comparison principle results in $\bar{\tau} \geq \tau$ on $(\Omega \times (0, T_{\max, \varepsilon})$ and, in particular $\|\tau_\varepsilon(\cdot, t)\|_{L^\infty(\Omega)} \leq \tau_\ast$ for all $t \in (0, T_{\max, \varepsilon})$. This gives \eqref{t_bound}.
\end{proof}

\noindent
With these estimates in hand, we now show that, for any fixed $\varepsilon \in (0, 1)$, the solution $(c_{1\varepsilon}, c_{2\varepsilon}, \chi_\varepsilon, \tau_\varepsilon)$ obtained in Lemma \ref{lemma_loc_ext} exists globally in time.
\begin{Lemma}[Global existence for \eqref{sys1}]
   For all $\varepsilon \in (0, 1)$, the solution $(c_{1\varepsilon}, c_{2\varepsilon}, \chi_\varepsilon, \tau_\varepsilon)$ of \eqref{sys1} obtained in Lemma \ref{lemma_loc_ext} is global, i.e. $T_{\max, \varepsilon} = \infty$.
\end{Lemma}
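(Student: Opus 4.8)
The plan is to argue by contradiction, exploiting the extensibility criterion \eqref{exten_criteria}. Suppose $T_{\max,\varepsilon}<\infty$ for some fixed $\varepsilon\in(0,1)$, and abbreviate $T:=T_{\max,\varepsilon}$, so that $\widehat{T}_\varepsilon=T$ and all the bounds of Lemmas \ref{lemmal1c1}, \ref{lemmal2c2}, \ref{lemma_chi}, and \ref{ht_bound} hold on $(0,T)$. In particular, $\chi_\varepsilon$ and $\tau_\varepsilon$ are already bounded in $L^\infty(\Omega\times(0,T))$ by \eqref{chi_bound2} and \eqref{t_bound}, while $c_{1\varepsilon},c_{2\varepsilon}$ are bounded in $L^\infty((0,T);L^1(\Omega))$. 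Since \eqref{sys1} is, for fixed $\varepsilon$, a semilinear parabolic system with smooth nonlinearities, it suffices to establish a uniform bound
\[
\sup_{t\in(0,T)}\big(\|c_{1\varepsilon}(\cdot,t)\|_{L^\infty(\Omega)}+\|c_{2\varepsilon}(\cdot,t)\|_{L^\infty(\Omega)}\big)<\infty ;
\]
once the two drift coefficients and all zeroth-order terms are controlled, parabolic $L^p$- and Schauder theory applied successively to each of the four equations upgrades this to uniform $C^{2+\vartheta}(\bar{\Omega})$ bounds on $(0,T)$, contradicting \eqref{exten_criteria} and forcing $T_{\max,\varepsilon}=\infty$.

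The heart of the argument is therefore the $L^\infty$ bound on the cell densities, which I would obtain by an $L^p$-bootstrap coupled to parabolic smoothing for the taxis signals. First I would control the gradients of the signals: writing $\chi_\varepsilon$ through the variation-of-constants formula for the Neumann heat semigroup (as in the proof of Lemma \ref{lemma_chi}) and using the smoothing estimates of \cite{winkler2010aggregation}, any $L^{p}((0,T);L^{q}(\Omega))$ bound on $c_{1\varepsilon}+c_{2\varepsilon}$ yields a corresponding bound on $\nabla\chi_\varepsilon$ in a higher space; the fourth equation of \eqref{sys1}, being a genuine (if $\varepsilon$-small) parabolic equation with bounded source $r_\varepsilon\le 1$ and dissipative reaction $-\delta c_{1\varepsilon}\tau_\varepsilon-\mu\tau_\varepsilon$, is treated in the same way to bound $\nabla\tau_\varepsilon$. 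Next I would test the first and second equations of \eqref{sys1} with $c_{i\varepsilon}^{p-1}$: after integration by parts, the haptotactic and chemotactic terms produce contributions of the form $\int_\Omega c_{i\varepsilon}^{p-1}\nabla c_{i\varepsilon}\cdot\nabla\tau_\varepsilon$ and $\int_\Omega c_{i\varepsilon}^{p-1}\nabla c_{i\varepsilon}\cdot\nabla\chi_\varepsilon$, half of which is absorbed into the diffusion term by Young's inequality, leaving a term of the type $C_p\int_\Omega c_{i\varepsilon}^{p}\,|\nabla\tau_\varepsilon|^{2}$ (respectively with $\nabla\chi_\varepsilon$). The decisive point is that the artificial dissipation $-\varepsilon\int_\Omega c_{i\varepsilon}^{p+\theta-1}$, available because $\theta>\max\{2,n\}$, is superlinear relative to $\int_\Omega c_{i\varepsilon}^{p}$ and, via another application of Young's inequality, absorbs this remaining term provided $\nabla\tau_\varepsilon,\nabla\chi_\varepsilon$ lie in a sufficiently high Lebesgue space, which the previous smoothing step supplies. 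A finite number of such steps, raising $p$ each time and feeding the improved cell bounds back into the signal-gradient estimates, terminates in a uniform $L^\infty$ bound.

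With $c_{1\varepsilon},c_{2\varepsilon}\in L^\infty(\Omega\times(0,T))$ in hand, the coefficients $b_\tau\nabla\tau_\varepsilon$, $b_\chi\nabla\chi_\varepsilon$ and all reaction terms become bounded, so parabolic $L^p$-estimates give $c_{i\varepsilon}\in L^\infty((0,T);W^{1,p}(\Omega))$ for every $p$, hence Hölder continuity, after which Schauder estimates yield the uniform $C^{2+\vartheta}$ bounds claimed in the first paragraph.

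The main obstacle is precisely the $L^p$-bootstrap for the cell densities, because of the two-way coupling between $c_{i\varepsilon}$ and the gradients $\nabla\tau_\varepsilon,\nabla\chi_\varepsilon$ entering the taxis terms: the $\tau_\varepsilon$-equation feeds $c_{1\varepsilon}$ back through its reaction term, so one cannot bound $\nabla\tau_\varepsilon$ independently of the cell estimates. This is exactly what the regularization is designed to break, as the $\varepsilon$-diffusion in the $\tau_\varepsilon$-equation renders it genuinely parabolic so that semigroup smoothing applies, while the superlinear dissipation $-\varepsilon c_{i\varepsilon}^\theta$ with $\theta>\max\{2,n\}$ dominates the quadratic taxis contribution and closes the iteration. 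I would expect the technical care to lie in tracking the exponents so that each absorption step is admissible (i.e.\ that $\theta-1$ always beats the power of $c_{i\varepsilon}$ generated by the taxis term) and in choosing the sequence of $p$'s so that the bootstrap reaches $L^\infty$ in finitely many steps.
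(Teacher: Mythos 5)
Your overall architecture matches the paper's: argue by contradiction against the extensibility criterion \eqref{exten_criteria}, first gain control of $\nabla\chi_\varepsilon$ and $\nabla\tau_\varepsilon$ from the (for fixed $\varepsilon$) genuinely parabolic third and fourth equations, then test the cell equations with $c_{i\varepsilon}^{p-1}$, absorb half of the taxis terms into the diffusion by Young's inequality, and finally upgrade to $C^{2+\vartheta}$ via parabolic H\"older and Schauder theory. Where you diverge is in how the leftover term $C_p\int_\Omega c_{i\varepsilon}^{p}|\nabla\tau_\varepsilon|^{2}$ (resp.\ with $\nabla\chi_\varepsilon$) is closed. The paper uses the artificial dissipation only at the $L^1$ level: integrating the mass identities gives $\varepsilon\int\int c_{i\varepsilon}^\theta\le C$, hence an ($\varepsilon$-dependent) space--time $L^\theta$ bound on the cells; feeding this into maximal Sobolev regularity for the $\chi_\varepsilon$- and $\tau_\varepsilon$-equations and using $W^{2,\theta}\hookrightarrow W^{1,\infty}$ (here $\theta>\max\{2,n\}$ is what matters) yields $\int_0^T\|\nabla\tau_\varepsilon\|_{L^\infty}^2\,dt<\infty$ and likewise for $\chi_\varepsilon$. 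The leftover taxis term is then simply $\le C_p\|\nabla\tau_\varepsilon(\cdot,t)\|_{L^\infty}^2\int_\Omega c_{i\varepsilon}^p$, i.e.\ linear in $y(t)=\int_\Omega c_{i\varepsilon}^p$ with a time-integrable coefficient, and Gronwall closes the $L^p$ bound for every $p>1$ in a single step --- no bootstrap and no use of $-\varepsilon c_{i\varepsilon}^{\theta+p-1}$ at the $L^p$ level.

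Your alternative --- splitting $c_{i\varepsilon}^p|\nabla\tau_\varepsilon|^2$ by Young against the dissipation $\varepsilon\int c_{i\varepsilon}^{p+\theta-1}$ and iterating in $p$ --- is the one genuinely risky point of your write-up, and you leave it unverified. That splitting requires $\nabla\tau_\varepsilon\in L^{2(p+\theta-1)/(\theta-1)}$ in space--time, an exponent that grows linearly in $p$, so each step of your iteration demands strictly better signal regularity than the last; whether the loop terminates at $L^\infty$ in finitely many steps is exactly the kind of exponent bookkeeping that can fail, and you acknowledge but do not carry it out. The fix is to notice that the bootstrap is unnecessary: the very first maximal-regularity (or semigroup) pass already puts $\nabla\tau_\varepsilon,\nabla\chi_\varepsilon$ into $L^2(0,T;L^\infty(\Omega))$, which suffices for the Gronwall argument at every $p$ simultaneously. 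One further small remark: an intermediate $L^\infty$ bound on $c_{1\varepsilon},c_{2\varepsilon}$ is not needed either; the paper goes from $L^p$ bounds for all $p$ directly to H\"older continuity via the quasilinear-structure estimates of Ladyzhenskaya--Solonnikov--Uraltseva, and thence to Schauder. So your proposal is salvageable and close in spirit, but as written the central closing step rests on an unproven iteration that the paper's simpler route avoids.
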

\begin{proof}
  We will establish $T_{\max, \varepsilon} = \infty$ by contradiction. Fix $\varepsilon \in (0, 1)$ and suppose that $T_{\max, \varepsilon} < \infty$. By Lemmas \ref{lemmal1c1} and \ref{lemmal2c2} we can have a $C(\varepsilon, T) > 0$ such that
  \begin{equation}
  \label{neq1}
    \int_0^{\widehat{T}_\varepsilon} \int_\Omega c_{1\varepsilon}^\theta \leq C(\varepsilon, T), \quad ~\text{and}~ \int_0^{\widehat{T}_\varepsilon} \int_\Omega c_{2\varepsilon}^\theta \leq C(\varepsilon, T),
  \end{equation}
  where $\widehat{T}_\varepsilon := \min\{T, T_{\max, \varepsilon}\}$ for $T > 0$. Using \eqref{mol_bound_short}, \eqref{chi_bound2} and \eqref{neq1}, we can have the boundedness of $f_\varepsilon := \partial_t \chi_\varepsilon - D_\chi \chi_\varepsilon \equiv - a_\chi(c_{1 \varepsilon} + c_{2 \varepsilon}) \chi_\varepsilon + F_\varepsilon(\chi_\varepsilon)$ in $L^\theta(\Omega \times (0, \widehat{T}_\varepsilon))$. Applying the maximal Sobolev regularity for parabolic equations \cite{matthias1997heat} to the third equation of \eqref{sys1}, we can have a $C_1(\varepsilon, T) > 0$ such that
  \begin{equation}
  \label{ge1}
    \int_0^{\widehat{T}_\varepsilon} \|\chi_\varepsilon(\cdot, t)\|^\theta_{W^{2, \theta}(\Omega)} dt \leq C_1(\varepsilon, T).
  \end{equation}
 Similarly, $g_\varepsilon := \partial_t \tau_\varepsilon - \varepsilon \Delta \tau_\varepsilon \equiv - \delta_1 c_{1\varepsilon} \tau_\varepsilon - \mu \tau_\varepsilon + \frac{c_{2\varepsilon}}{1 + c_{2\varepsilon}}$ also belongs to $L^\theta(\Omega \times (0, \widehat{T}_\varepsilon))$, thanks to \eqref{c1_bound2}, \eqref{t_bound} and the fact that $\frac{c_{2\varepsilon}}{1 + c_{2\varepsilon}} < 1$ for $c_{2\varepsilon} \geq 0$. Applying the same  regularity to the fourth equation in \eqref{sys1}, yields a $C_2(\varepsilon, T) > 0$ fulfilling
  \begin{equation}
  \label{ge2}
    \int_0^{\widehat{T}_\varepsilon} \|\tau_\varepsilon(\cdot, t)\|^\theta_{W^{2, \theta}(\Omega)} dt \leq C_2(\varepsilon, T).
  \end{equation}
  Since $\theta > \max\{2, n\}$ we can apply the Sobolev embedding $W^{2, \theta}(\Omega) \hookrightarrow W^{1, \infty}(\Omega)$ along with H\"{o}lder's inequality to have a $\widehat{C}(\varepsilon, T) > 0$ such that
  \begin{equation}
  \label{ge3}
    \int_0^{\widehat{T}_\varepsilon} \|\nabla \chi_\varepsilon(\cdot, t)\|^2_{L^\infty(\Omega)} dt \leq \widehat{C}(\varepsilon, T), \quad \text{and}~ \int_0^{\widehat{T}_\varepsilon} \|\nabla \tau_\varepsilon(\cdot, t)\|^2_{L^\infty(\Omega)} dt \leq \widehat{C}(\varepsilon, T).
  \end{equation}
 Now we will apply a standard testing procedure to the first and second equations in \eqref{sys1}. To this end we  first multiply the second equation in \eqref{sys1} by $c_{2 \varepsilon}^{p - 1} (p > 1)$, and integrate by parts to have
\begin{align}
\label{ge4}
   \frac{1}{p} \frac{d}{dt} \int_\Omega c_{2\varepsilon}^p + (p - 1) a_2  \int_\Omega c_{2\varepsilon}^{p - 2}|\nabla c_{2\varepsilon}|^2 \leq (p - 1) b_\chi \int_\Omega c_{2\varepsilon}^{p - 1} \nabla c_{2\varepsilon} \cdot \nabla \chi_\varepsilon + \int_\Omega \alpha_1(\chi_\varepsilon) \dfrac{c_{1\varepsilon} }{1 + c_{1\varepsilon}} c_{2\varepsilon}^{p - 1}
\end{align}
for all $t \in (0, T_{\max, \varepsilon})$. Applying Young's inequality to the two terms on the right-hand side in \eqref{ge4} gives us
\begin{align}
  \label{ge5}
   & (p - 1) b_\chi \int_\Omega c_{2\varepsilon}^{p - 1} \nabla c_{2\varepsilon} \cdot \nabla \chi_\varepsilon \leq \frac{(p - 1) a_2}{2}\int_\Omega c_{2\varepsilon}^{p - 2} |\nabla c_{2 \varepsilon}|^2 + \frac{(p - 1) b_\chi^2}{2 a_2} \int_\Omega c_{2\varepsilon}^p |\nabla \chi_\varepsilon|^2  \\[5pt]
  \label{ge6}
  & \int_\Omega \alpha_1(\chi_\varepsilon) \dfrac{c_{1\varepsilon} }{1 + c_{1\varepsilon}} c_{2\varepsilon}^{p - 1}  \leq M_{\alpha_1} \int_\Omega c_{2\varepsilon}^{p - 1} \leq \int_\Omega c_{2\varepsilon}^p + M_{\alpha_1}^p |\Omega|.
\end{align}
Inserting \eqref{ge5} and \eqref{ge6} in \eqref{ge4} yields
\begin{align}
\label{c2ineq1}
\frac{d}{dt} \int_\Omega c_{2\varepsilon}^p + \frac{p(p - 1) a_2}{2}\int_\Omega c_{2\varepsilon}^{p - 2} |\nabla c_{2 \varepsilon}|^2\leq \frac{p (p - 1) b_\chi^2 }{2 a_2} \int_\Omega c_{2\varepsilon}^p |\nabla \chi_\varepsilon|^2 + p \int_\Omega c_{2\varepsilon}^p + p M_{\alpha_1}^p|\Omega|
\end{align}
for all $t \in (0, T_{\max, \varepsilon})$. From \eqref{c2ineq1} we can have
\begin{align}
  \label{c2p_bound1}
  \frac{d}{dt} \int_\Omega c_{2\varepsilon}^p \leq \left(\frac{p (p - 1) b_\chi^2 }{2 a_2} \|\nabla \chi_\varepsilon(\cdot, t)\|^2_{L^\infty(\Omega)} + p \right)\int_\Omega c_{2\varepsilon}^p + p M_{\alpha_1}^p|\Omega|
\end{align}
for all $t \in (0, T_{\max, \varepsilon})$. In view of \eqref{ge3}, applying Gronwall's inequality to \eqref{c2p_bound1} results in
\begin{align}
  \label{c2p_bound}
  \nonumber \int_\Omega c^p_{2\varepsilon}(\cdot, t) & \leq \exp\left\{\frac{p(p - 1) b_\chi^2}{2 a_2} \int_0^t \|\nabla \chi_\varepsilon(\cdot, s)\|^2_{L^\infty(\Omega)}ds + pt\right\}\left(\int_\Omega c_{20\varepsilon}^p + p M_{\alpha_1}^p |\Omega|t\right)   \\[5pt]
  & \leq C_3 \quad \text{for all}~ t \in (0, \widehat{T}_\varepsilon).
\end{align}
\noindent
Now, we multiply the first equation in \eqref{sys1} by $c_{1 \varepsilon}^{p - 1}$, integrate by parts, and drop a few negative terms to have
\begin{align}
\label{c1ge4}
  \frac{1}{p} \frac{d}{dt} \int_\Omega c_{1\varepsilon}^p & + (p - 1) a_1  \int_\Omega c_{1\varepsilon}^{p - 2}|\nabla c_{1\varepsilon}|^2 \leq (p - 1) b_\tau \int_\Omega c_{1\varepsilon}^{p - 1}\nabla c_{1\varepsilon} \cdot \nabla \tau_\varepsilon + \int_\Omega \alpha_2(\chi_\varepsilon) \dfrac{c_{2\varepsilon} }{1 + c_{2\varepsilon}} c_{1\varepsilon}^{p - 1} + \beta \int_\Omega c_{1\varepsilon}^p
\end{align}
for all $t \in (0, T_{\max, \varepsilon})$. Applying Young's inequality to the first two terms on the right-hand side in \eqref{c1ge4} gives us
\begin{align}
  \label{c1ge5}
  & (p - 1) b_\tau \int_\Omega c_{1\varepsilon}^{p - 1} \nabla c_{1\varepsilon} \cdot \nabla \tau_\varepsilon\leq \frac{(p - 1) a_1}{4} \int_\Omega c_{1\varepsilon}^{p - 2} |\nabla c_{1 \varepsilon}|^2 + \frac{(p - 1) b_\tau^2}{ a_1} \int_\Omega c_{1\varepsilon}^p |\nabla \tau_\varepsilon|^2, \\
  \label{c1ge7}
  & \int_\Omega \alpha_2(\chi_\varepsilon) \dfrac{c_{2\varepsilon} }{1 + c_{2\varepsilon}} c_{1\varepsilon}^{p - 1}  \leq M_{\alpha_2} \int_\Omega c_{1\varepsilon}^{p - 1} \leq \int_\Omega c_{1\varepsilon}^p + M_{\alpha_2}^p |\Omega|.
\end{align}
Inserting \eqref{c1ge5}-\eqref{c1ge7} in \eqref{c1ge4} yields
\begin{align}
  \label{c1ineq}
   \frac{d}{dt} \int_\Omega c_{1\varepsilon}^p \leq  \frac{p(p - 1) b_\tau^2 }{ a_1} \int_\Omega c_{1\varepsilon}^p |\nabla \tau_\varepsilon|^2 + p(\beta + 1) \int_\Omega c_{1\varepsilon}^p + pM_{\alpha_2}^p |\Omega|
  \end{align}
for all $t \in (0, T_{\max, \varepsilon})$. From \eqref{c1ineq} we can have
\begin{align}
  \label{c1ineq1}
  \frac{d}{dt} \int_\Omega c_{1\varepsilon}^p \leq & \left\{\frac{p(p - 1) b_\tau^2}{a_1} \|\nabla \tau_\varepsilon(\cdot, t)\|^2_{L^\infty(\Omega)} + p (\beta + 1)\right\} \int_\Omega c_{1\varepsilon}^p + p M_{\alpha_2}^p |\Omega|
\end{align}
for all $t \in (0, T_{\max, \varepsilon})$. Applying Gronwall's inequality to \eqref{c1ineq1} in view of \eqref{ge3} results in
\begin{align}
  \label{c1pbound}
  \int_\Omega c^p_{1\varepsilon}(\cdot, t) \leq & \exp\bigg\{\frac{p(p - 1)b_\tau^2}{a_1} \int_0^t \|\nabla \tau_\varepsilon(\cdot, s)\|_{L^\infty(\Omega)}^2 ds  + p(\beta + 1)t\bigg\} \left(\int_\Omega c_{10\varepsilon} + p M_{\alpha_2}^p |\Omega| t\right) \leq C_4
\end{align}
for all $t \in (0, \widehat{T}_\varepsilon)$. From \eqref{mol_bound_short}, \eqref{chi_bound2}, \eqref{t_bound}, \eqref{c2p_bound1}, and \eqref{c1pbound}, we deduce that
\begin{align*}
& f_\varepsilon := a_\chi (c_{1 \varepsilon} + c_{2 \varepsilon}) \chi_\varepsilon + F_\varepsilon(\chi_\varepsilon), \\[5pt]
& g_\varepsilon := - \delta_1 c_{1\varepsilon} \tau_\varepsilon - \mu \tau_\varepsilon + \tfrac{c_{2\varepsilon}}{1 + c_{2\varepsilon}},
\end{align*}
belong to $L^\infty((0, \widehat{T}_\varepsilon); L^p(\Omega))$ for any $p > 1$. Using this together with the variation-of-constants formula and the properties of Neumann heat semigroups \cite[Lemma 1.3]{winkler2010aggregation}, we obtain the following estimates:
\begin{align}
\label{nabchi_bound}
  \nonumber & \|\nabla \chi_\varepsilon(\cdot, t)\|_{L^\infty(\Omega)} \leq \|\nabla e^{D_\chi t \Delta}\chi_\varepsilon(\cdot, 0)\|_{L^\infty(\Omega)} + \int_0^t \| \nabla e^{D_\chi (t - s)\Delta }f_\varepsilon  \|_{L^\infty(\Omega)}ds\\
    & \leq C_5(1 + (D_\chi t)^{-\frac{1}{2}})e^{-\lambda_1 D_\chi t} \|\chi_{0\varepsilon}\|_{L^\infty(\Omega)}
   + \int_0^t C_5(1 + (D_\chi (t - s))^{-\frac{1}{2} - \frac{n}{2}\frac{1}{p}})e^{- \lambda_1 D_\chi(t - s)}\|f_\varepsilon \|_{L^p(\Omega)}ds
\end{align}
and
\begin{align}
\label{nabtau_bound}
  \nonumber & \|\nabla \tau_\varepsilon(\cdot, t)\|_{L^\infty(\Omega)} \leq \|\nabla e^{\varepsilon t \Delta}\tau_\varepsilon(\cdot, 0)\|_{L^\infty(\Omega)} + \int_0^t \| \nabla e^{\varepsilon (t - s)\Delta }g_\varepsilon  \|_{L^\infty(\Omega)}ds\\
    & \leq C_6(1 + (\varepsilon t)^{-\frac{1}{2}})e^{-\lambda_1 \varepsilon t} \|\tau_{0\varepsilon}\|_{L^\infty(\Omega)}
   + \int_0^t C_6(1 + (\varepsilon (t - s))^{-\frac{1}{2} - \frac{n}{2}\frac{1}{p}})e^{- \lambda_1 \varepsilon (t - s)}\|g_\varepsilon \|_{L^p(\Omega)}ds,
\end{align}
for all $t \in (0, \widehat{T}_{\varepsilon})$ and all $p \in (1, \infty)$. Here, $\lambda_1 > 0$ denotes the first nonzero eigenvalue of the Laplacian with Neumann boundary conditions in $\Omega$. Choosing $p \in (n, \infty)$ and using the boundedness of $\|f_\varepsilon(\cdot, s)\|_{L^p(\Omega)}$ and $\|g_\varepsilon(\cdot, s)\|_{L^p(\Omega)}$ for $s \in (0, \widehat{T}_\varepsilon)$, the integrals in \eqref{nabchi_bound} and \eqref{nabtau_bound} are seen to converge.\\[-1ex]

\noindent
The above estimates enable us to establish global existence in time for \eqref{sys1}. To this end we define the following four functions:
\begin{align*}
  & A_{c_1}(x, t, c_{1\varepsilon}, q) := a_1 q - b_\tau c_{1\varepsilon} \nabla \tau_\varepsilon(x, t),          \\[5pt]
  & B_{c_1}(x, t, c_{1\varepsilon}, c_{2\varepsilon}) := - \alpha_1(\chi_\varepsilon)  \tfrac{c_{1\varepsilon}}{1 + c_{1\varepsilon}}   + \alpha_2(\chi_\varepsilon) \tfrac{c_{2\varepsilon}}{1 + c_{2\varepsilon}}
    + \beta c_{1\varepsilon}(1 - c_{1\varepsilon} - c_{2\varepsilon} - \tau_\varepsilon) - \varepsilon c_{1\varepsilon}^\theta,  \\[5pt]
  & A_{c_2}(x, t, c_{2\varepsilon}, q) := a_2 q - b_\chi c_{2\varepsilon} \nabla \chi_\varepsilon(x, t), \\[5pt]
  & B_{c_2}(x, t, c_{2\varepsilon}, c_{2\varepsilon}) :=  \alpha_1(\chi_\varepsilon) \tfrac{c_{1\varepsilon}}{1 + c_{1\varepsilon}}-\alpha_2(\chi_\varepsilon)  \tfrac{c_{2\varepsilon}}{1 + c_{2\varepsilon}} - \varepsilon c_{2\varepsilon}^\theta,
\end{align*}
where  $A_{c_i} \in \Omega \times (0, T_{\max, \varepsilon}) \times \mathbb{R} \times \mathbb{R}^n$ and $B_{c_i} \in \Omega \times (0, T_{\max, \varepsilon}) \times \mathbb{R} \times \mathbb{R}, i \in \{1, 2\}$ respectively. \\[-1ex]

\noindent
From \eqref{sys1} we can easily observe that
\begin{align*}
  &\partial_t c_{1\varepsilon} - \nabla \cdot (A_{c_1}(x, t, c_{1\varepsilon}, \nabla c_{1\varepsilon})) = B_{c_1}(x, t, c_{1\varepsilon}, \nabla c_{1\varepsilon}),   \quad \text{and}\\[5pt]
  &\partial_t c_{2\varepsilon} - \nabla \cdot (A_{c_2}(x, t, c_{1\varepsilon}, \nabla c_{1\varepsilon})) = B_{c_2}(x, t, c_{1\varepsilon}, \nabla c_{1\varepsilon})
\end{align*}
hold for all $(x, t) \in \Omega \times (0, T_{\max, \varepsilon}) $. For all $(x, t) \in \Omega \times (0, T_{\max, \varepsilon})$ we can have
\begin{equation*}
  \begin{cases}
     A_{c_1}(x, t, c_{1\varepsilon}, \nabla c_{1\varepsilon})\nabla c_{1\varepsilon}  \geq \frac{a_1}{2} |\nabla c_{1\varepsilon}|^2 - \psi_1(x, t),  \\[5pt]
    |A_{c_1}(x, t, c_{1\varepsilon}, \nabla c_{1\varepsilon})| \leq a_1 |\nabla c_{1\varepsilon}| + \psi_2(x, t),   \\[5pt]
    |B_{c_1}(x, t, c_{1\varepsilon}, \nabla c_{1\varepsilon}))| \leq \psi_3(x, t),
  \end{cases}
\end{equation*}
\begin{equation*}
  \begin{cases}
     A_{c_2}(x, t, c_{2\varepsilon}, \nabla c_{2\varepsilon})\nabla c_{2\varepsilon}  \geq \frac{a_2}{2} |\nabla c_{1\varepsilon}|^2 - \psi_4(x, t),  \\[5pt]
    |A_{c_2}(x, t, c_{2\varepsilon}, \nabla c_{2\varepsilon})| \leq a_2 |\nabla c_{2\varepsilon}| + \psi_5(x, t),   \\[5pt]
    |B_{c_2}(x, t, c_{2\varepsilon}, \nabla c_{2\varepsilon}))| \leq \psi_6(x, t),
  \end{cases}
\end{equation*}
\noindent
where
\begin{align*}
 \psi_1 &:= \frac{b_\tau^2}{a_1} c_{1\varepsilon}^2 |\nabla \tau_\varepsilon|^2,\\ \psi_2 &:=  b_1 c_{1\varepsilon} |\nabla \tau_\varepsilon| + b_2 c_{1\varepsilon} |\nabla \chi_\varepsilon|,\\
  \psi_3 &:= \alpha_1(\chi_\varepsilon)  c_{1\varepsilon}  + \alpha_2(\chi_\varepsilon) c_{2\varepsilon} + \beta c_{1\varepsilon} + \beta c_{1\varepsilon}^2 + \beta c_{1\varepsilon} c_{2\varepsilon} + \beta c_{1\varepsilon} \tau_\varepsilon +\varepsilon c_{1\varepsilon}^\theta,\\
  \psi_4 &:=  \frac{b_\chi^2}{2a_2} c_{2\varepsilon}^2 |\nabla \chi_\varepsilon|^2,\\
 \psi_5 &:= b_\chi c_{2\varepsilon} |\nabla \chi_\varepsilon|,\\ 
 \psi_6 &:= \alpha_1(\chi_\varepsilon) c_{1\varepsilon}+\alpha_2(\chi_\varepsilon)  c_{2\varepsilon} + \varepsilon c_{2\varepsilon}^\theta
 \end{align*}
  are nonnegative functions in $\Omega \times (0, T_{\max, \varepsilon})$. \\[-1.5ex]
  
  \noindent
  If we set $T = T_{\max, \varepsilon}$ then by  \eqref{c2p_bound}, \eqref{c1pbound}, \eqref{nabchi_bound} and \eqref{nabtau_bound} all the $\psi_i's, i = \{1, 2 \ldots, 6\}$ belong to $L^p(\Omega \times (0, T_{\max, \varepsilon}))$ for every $p > 1$. Applying the parabolic H\"{o}lder estimates \cite[Theorem 1.3 and Remark 1.4]{ladyzhenskaia1968linear}, we can claim that there exists a $\vartheta \in (0, 1)$ fulfilling
\begin{equation*}
  \|c_{1\varepsilon}\|_{C^{\vartheta, \frac{\vartheta}{2}}(\bar{\Omega} \times [0, T_{\max, \varepsilon}])} \leq C_5 \quad \text{and}~ \|c_{2\varepsilon}\|_{C^{\vartheta, \frac{\vartheta}{2}}(\bar{\Omega} \times [0, T_{\max, \varepsilon}])} \leq C_5
\end{equation*}
with $C_5 > 0$. Applying the standard parabolic Schauder estimates \cite{ladyzhenskaia1968linear} to $\chi_\varepsilon$ and $\tau_\varepsilon$ equations in \eqref{sys1}, we can find a $C_6 > 0$ such that
\begin{equation}
\label{contra1}
  \|\chi_{\varepsilon}\|_{C^{2 + \vartheta, 1 + \frac{\vartheta}{2}}(\bar{\Omega} \times [0, T_{\max, \varepsilon}])} \leq C_6 \quad \text{and}~ \|\tau_{\varepsilon}\|_{C^{2 + \vartheta , 1 + \frac{\vartheta}{2}}(\bar{\Omega} \times [0, T_{\max, \varepsilon}])} \leq C_6.
\end{equation}
Again, the standard parabolic Schauder estimates enable us to find a $C_7 > 0$ such that
\begin{equation}
\label{contra2}
  \|c_{1\varepsilon}\|_{C^{2 + \vartheta, 1 + \frac{\vartheta}{2}}(\bar{\Omega} \times [0, T_{\max, \varepsilon}])} \leq C_7 \quad \text{and}~ \|c_{2\varepsilon}\|_{C^{2 + \vartheta , 1 + \frac{\vartheta}{2}}(\bar{\Omega} \times [0, T_{\max, \varepsilon}])} \leq C_7.
\end{equation}
Taken together, \eqref{contra1} and \eqref{contra2} contradict the extensibility criterion \eqref{exten_criteria}.

\end{proof}

\section{An entropy-type functional}\label{sec:entropy}
This section aims to derive some estimates that stem from an entropy-type functional and are the main step towards the existence of a global weak solution to \eqref{model}. We will initially establish certain inequalities that will prove to be valuable later on.
\begin{Lemma}
  \label{entropy1}
  There exists a $C > 0$ such that any solution of \eqref{sys1} satisfies
  \begin{align}
    \label{c1entro_ineq1}
    \nonumber \frac{d}{dt} \int_\Omega c_{1\varepsilon} \ln c_{1\varepsilon}  + a_1 \int_\Omega \frac{|\nabla c_{1\varepsilon}|^2}{c_{1\varepsilon}} & + \frac{\beta}{2} \int_\Omega c_{1\varepsilon}^2 \ln(2 + c_{1\varepsilon}) + \frac{\varepsilon}{2} \int_\Omega c_{1\varepsilon}^\theta \ln(2 + c_{1\varepsilon}) \leq b_\tau \int_\Omega  \nabla \tau_\varepsilon \cdot \nabla c_{1\varepsilon} \\[5pt]
    & +  M_{\alpha_2}\int_{\{ c_{1\varepsilon} > 1\}} \ln c_{1\varepsilon} + \beta \int_\Omega c_{1\varepsilon}  + \frac{\beta}{e} \int_\Omega c_{2\varepsilon}  + C
  \end{align}
  for all $\varepsilon \in (0, 1)$ and all $t > 0$.
 \end{Lemma}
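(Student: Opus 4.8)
The plan is to derive \eqref{c1entro_ineq1} by testing the first equation in \eqref{sys1} with $\ln c_{1\varepsilon}$ (equivalently, differentiating the entropy $\int_\Omega c_{1\varepsilon}\ln c_{1\varepsilon}$ in time and substituting the PDE). The positivity of $c_{1\varepsilon}$ on $[0,T_{\max,\varepsilon})$ from \Cref{lemma_loc_ext} makes $\ln c_{1\varepsilon}$ admissible as a test function, and the zero-flux boundary condition guarantees the integration-by-parts steps produce no boundary contributions.

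\textbf{Step 1: the entropy identity.}
Since $\frac{d}{dt}\int_\Omega c_{1\varepsilon}\ln c_{1\varepsilon} = \int_\Omega (\ln c_{1\varepsilon}+1)\,\partial_t c_{1\varepsilon}$, I would substitute $\partial_t c_{1\varepsilon}$ from \eqref{sys1}. The diffusion term $a_1\int_\Omega \Delta c_{1\varepsilon}\,(\ln c_{1\varepsilon}+1)$ integrates by parts to $-a_1\int_\Omega \frac{|\nabla c_{1\varepsilon}|^2}{c_{1\varepsilon}}$, producing the positive dissipation term on the left. The haptotaxis term $-b_\tau\nabla\cdot(c_{1\varepsilon}\nabla\tau_\varepsilon)$, tested against $\ln c_{1\varepsilon}+1$, integrates by parts to $b_\tau\int_\Omega c_{1\varepsilon}\nabla\tau_\varepsilon\cdot\frac{\nabla c_{1\varepsilon}}{c_{1\varepsilon}} = b_\tau\int_\Omega\nabla\tau_\varepsilon\cdot\nabla c_{1\varepsilon}$, which is exactly the first term on the right-hand side (the cancellation of $c_{1\varepsilon}$ against the gradient of the logarithm is the reason haptotaxis is tractable in this entropy). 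The term $+1$ in $\ln c_{1\varepsilon}+1$ contributes only zero-order quantities controlled below.

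\textbf{Step 2: estimating the reaction terms.}
The remaining contributions come from the reaction terms $-\alpha_1(\chi_\varepsilon)\frac{c_{1\varepsilon}}{1+c_{1\varepsilon}}$, $+\alpha_2(\chi_\varepsilon)\frac{c_{2\varepsilon}}{1+c_{2\varepsilon}}$, the logistic term $\beta c_{1\varepsilon}(1-c_{1\varepsilon}-c_{2\varepsilon}-\tau_\varepsilon)$, and the absorption $-\varepsilon c_{1\varepsilon}^\theta$, each multiplied by $(\ln c_{1\varepsilon}+1)$. Using \eqref{alpha_def}, the $\alpha_1$ term is nonpositive on $\{c_{1\varepsilon}>1/e\}$ (or is controlled by a constant); the $\alpha_2$ term is bounded since $\frac{c_{2\varepsilon}}{1+c_{2\varepsilon}}<1$ and can be estimated by $\frac{\beta}{e}\int_\Omega c_{2\varepsilon}+C$ after splitting the $\ln$ over $\{c_{1\varepsilon}<1\}$ and $\{c_{1\varepsilon}\ge1\}$ and using the elementary bound $z\ln z\ge -1/e$. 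The crucial negative logistic contribution $-\beta\int_\Omega c_{1\varepsilon}^2(\ln c_{1\varepsilon}+1)$ must absorb both $\frac{\beta}{2}\int_\Omega c_{1\varepsilon}^2\ln(2+c_{1\varepsilon})$ and the absorption term's analogue: here I would use the pointwise inequality $c_{1\varepsilon}^2(\ln c_{1\varepsilon}+1)\ge \tfrac12 c_{1\varepsilon}^2\ln(2+c_{1\varepsilon})-C$ (valid since $\ln c_{1\varepsilon}$ and $\ln(2+c_{1\varepsilon})$ differ by a bounded amount for large $c_{1\varepsilon}$, and small-$c_{1\varepsilon}$ discrepancies are absorbed into $C$), and similarly move half of $-\varepsilon\int_\Omega c_{1\varepsilon}^\theta\ln c_{1\varepsilon}$ to the left as $\frac{\varepsilon}{2}\int_\Omega c_{1\varepsilon}^\theta\ln(2+c_{1\varepsilon})$. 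The terms $\beta\int_\Omega c_{1\varepsilon}$, the $\ln$-terms from $(-\beta c_{1\varepsilon}c_{2\varepsilon})$ and $(-\beta c_{1\varepsilon}\tau_\varepsilon)$ (both negative, hence discardable after the $+1$ part is handled), and the zero-order $+1$ pieces collect into the $\beta\int_\Omega c_{1\varepsilon}$, $\frac{\beta}{e}\int_\Omega c_{2\varepsilon}$, and $C$ on the right.

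\textbf{Main obstacle.}
The delicate point is the bookkeeping of the sign of each reaction term after multiplication by $\ln c_{1\varepsilon}+1$, which changes sign across $c_{1\varepsilon}=e^{-1}$. The terms $M_{\alpha_2}\int_{\{c_{1\varepsilon}>1\}}\ln c_{1\varepsilon}$ on the right-hand side is precisely the residue that cannot be absorbed into the dissipation and must be retained; I expect the chief technical effort to lie in verifying the elementary comparison inequalities (relating $\ln c_{1\varepsilon}$ to $\ln(2+c_{1\varepsilon})$ and bounding $z\ln z$ from below) that let the genuinely negative logistic and absorption contributions dominate the $\ln(2+c_{1\varepsilon})$ dissipation terms on the left, all uniformly in $\varepsilon\in(0,1)$ and $t>0$.
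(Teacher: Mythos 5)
Your proposal follows essentially the same route as the paper: test the first equation with $\ln c_{1\varepsilon}+1$, integrate by parts to produce the Fisher-type dissipation and the term $b_\tau\int_\Omega\nabla\tau_\varepsilon\cdot\nabla c_{1\varepsilon}$, control the reaction terms via $-s\ln s\le 1/e$ and the split over $\{c_{1\varepsilon}>1\}$, and use the elementary comparison between $\ln c_{1\varepsilon}$ and $\ln(2+c_{1\varepsilon})$ (the paper cites \cite[Lemma 4.2]{stinner2014global} for exactly this) to move the logistic and absorption contributions to the left. One bookkeeping slip worth correcting: the bound $\frac{\beta}{e}\int_\Omega c_{2\varepsilon}$ comes from $-\beta\int_\Omega c_{1\varepsilon}c_{2\varepsilon}\ln c_{1\varepsilon}$ (which is \emph{not} pointwise nonpositive, since $\ln c_{1\varepsilon}<0$ where $c_{1\varepsilon}<1$, so it cannot simply be discarded), while the $\alpha_2$-term yields $M_{\alpha_2}\int_{\{c_{1\varepsilon}>1\}}\ln c_{1\varepsilon}+M_{\alpha_2}|\Omega|$ — but since both bounds appear on your right-hand side, the argument goes through as in the paper.
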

\begin{proof}
  We use the first equation of \eqref{sys1}, the non-negativity of solution components, and integration by parts to calculate, after testing with $\ln c_{1\varepsilon}$,
  \begin{align}
  \label{cient11}
    \nonumber & \frac{d}{dt} \int_\Omega c_{1\varepsilon} \ln c_{1\varepsilon}  = \int_\Omega \partial_t c_{1\varepsilon} \cdot \ln c_{1\varepsilon} + \int_\Omega \partial_t c_{1\varepsilon}   \\[5pt]
    \nonumber &=  a_1 \int_\Omega \Delta c_{1\varepsilon} \cdot \ln c_{1\varepsilon}  - b_{\tau}\int_\Omega \nabla \cdot \left(c_{1\varepsilon}  \nabla \tau_\varepsilon\right) \ln c_{1\varepsilon} - \int_\Omega \alpha_1(\chi_\varepsilon)  \tfrac{c_{1\varepsilon}}{1 + c_{1\varepsilon}} \ln c_{1\varepsilon} + \int_\Omega  \alpha_2(\chi_\varepsilon) \tfrac{c_{2\varepsilon}}{1 + c_{2\varepsilon}}   \ln c_{1\varepsilon}  \\[5pt]
    \nonumber &  + \beta \int_\Omega c_{1\varepsilon}(1 - c_{1\varepsilon} -  c_{2\varepsilon} - \tau_\varepsilon)\ln c_{1\varepsilon}  - \varepsilon \int_\Omega c_{1\varepsilon}^\theta \ln c_{1 \varepsilon}  - \int_\Omega \alpha_1(\chi_\varepsilon) \tfrac{c_{1\varepsilon}}{1 + c_{1\varepsilon}} + \int_\Omega \alpha_2(\chi_\varepsilon) \tfrac{c_{2\varepsilon}}{1 + c_{2\varepsilon}} \\[5pt]
    \nonumber &  + \beta \int_\Omega c_{1\varepsilon}(1 - c_{1\varepsilon} -  c_{2\varepsilon} - \tau_\varepsilon) - \varepsilon \int_\Omega c_{1\varepsilon}^\theta \\[5pt]
    \nonumber & \leq - a_1 \int_\Omega \frac{|\nabla c_{1\varepsilon}|^2}{c_{1\varepsilon}} + b_\tau \int_\Omega  \nabla \tau_\varepsilon \cdot \nabla c_{1\varepsilon} - \int_\Omega \alpha_1(\chi_\varepsilon)  \tfrac{c_{1\varepsilon}}{1 + c_{1\varepsilon}} \cdot \ln c_{1\varepsilon} + \int_\Omega  \alpha_2(\chi_\varepsilon) \tfrac{c_{2\varepsilon}}{1 + c_{2\varepsilon}}  \cdot \ln c_{1\varepsilon}  \\[5pt]
    & \nonumber   + \beta \int_\Omega c_{1\varepsilon}(1 - c_{1\varepsilon} -  c_{2\varepsilon} - \tau_\varepsilon)\ln c_{1\varepsilon} - \varepsilon \int_\Omega c_{1\varepsilon}^\theta \ln c_{1 \varepsilon}  - \int_\Omega \alpha_1(\chi_\varepsilon) \tfrac{c_{1\varepsilon}}{1 + c_{1\varepsilon}} + \int_\Omega \alpha_2(\chi_\varepsilon) \tfrac{c_{2\varepsilon}}{1 + c_{2\varepsilon}} \\[5pt]
    & + \beta \int_\Omega c_{1\varepsilon}(1 - c_{1\varepsilon} -  c_{2\varepsilon} - \tau_\varepsilon)  - \varepsilon \int_\Omega c_{1\varepsilon}^\theta  \quad \text{for all}~ t > 0.
  \end{align}
 Utilizing \eqref{alpha_def},  the non-negativity of the solution components, \eqref{t_bound}, the identity $- s \ln s \leq \frac{1}{e}$ for $s > 0$, and the fact that $\frac{c_{i\varepsilon}}{1 + c_{i\varepsilon}} < 1$ for $c_{i\varepsilon} \geq 0, i \in \{1, 2\}$, we can have
  \begin{align}
  \label{cient12}
    & - \beta \int_\Omega \tau_\varepsilon c_{1\varepsilon} \cdot \ln c_{1\varepsilon} \leq \frac{\beta}{e} \int_\Omega \tau_\varepsilon \leq \frac{\beta\tau_\ast}{e}|\Omega|,  \\[5pt]
  \label{cient13}
   & -  \beta \int_\Omega c_{2\varepsilon} c_{1\varepsilon} \cdot \ln c_{1\varepsilon} \leq \frac{\beta}{e} \int_\Omega c_{2\varepsilon},  \\[5pt]
  \label{cient14}
   & -  \int_\Omega \alpha_1(\chi_\varepsilon)  \tfrac{c_{1\varepsilon}}{1 + c_{1\varepsilon}} \cdot \ln c_{1\varepsilon}  \leq \frac{1}{e} \int_\Omega \alpha_1(\chi_\varepsilon) \tfrac{1}{1 + c_{1\varepsilon}} \leq \frac{1}{e}M_{\alpha_1} |\Omega|, \\[5pt]
  \label{cient15}
  & \int_\Omega  \alpha_2(\chi_\varepsilon) \tfrac{c_{2\varepsilon}}{1 + c_{2\varepsilon}}  \cdot \ln c_{1\varepsilon} \leq  M_{\alpha_2}\int_{\{c_{1\varepsilon} > 1\}}  \ln c_{1\varepsilon},\\[5pt]
  \label{cient16}
  & \int_\Omega   \alpha_2(\chi_\varepsilon) \tfrac{c_{2\varepsilon}}{1 + c_{2\varepsilon}} \leq  M_{\alpha_2}|\Omega|, \\[5pt]
  \label{cient17}
  & - \int_\Omega \alpha_1(\chi_\varepsilon) \tfrac{c_{1\varepsilon}}{1 + c_{1\varepsilon}} + \beta \int_\Omega c_{1\varepsilon}(1 - c_{1\varepsilon} -  c_{2\varepsilon} - \tau_\varepsilon) - \varepsilon \int_\Omega c_{1\varepsilon}^\theta\leq \beta  \int_\Omega c_{1\varepsilon}.
 \end{align}
Using \cite[Lemma 4.2]{stinner2014global} we can find two constants $C_1 > 0$ and $C_2 > 0$ such that
\begin{align}
  \label{stin1} & \beta \int_\Omega \{c_{1\varepsilon} \ln c_{1\varepsilon} - c_{1\varepsilon}^2 \ln c_{1\varepsilon} \} \leq -\frac{\beta}{2} \int_\Omega c_{1\varepsilon}^2 \ln(2 + c_{1\varepsilon}) + C_1  \\[5pt]
  \label{stin2} & - \varepsilon \int_\Omega c_{1\varepsilon}^\theta \ln c_{1\varepsilon} \leq - \frac{\varepsilon}{2} \int_\Omega c_{1\varepsilon}^\theta \ln(2 + c_{1\varepsilon}) + C_2.
\end{align}

\noindent
Inserting \eqref{cient12}-\eqref{stin2} in \eqref{cient11} yields \eqref{c1entro_ineq1}.
\end{proof}

\begin{Lemma}
\label{elemmac2}
   Let $\zeta > 0$. There exists a constant $C > 0$ such that any solution of \eqref{sys1} satisfies
  \begin{align}
    \label{eyc1}
    \dfrac{d}{dt} \int_\Omega c_{2 \varepsilon} \ln c_{2 \varepsilon} + a_2 \int_\Omega \dfrac{|\nabla c_{2 \varepsilon}|^2}{c_{2\varepsilon}}  + \frac{\varepsilon}{2} \int_\Omega c_{2\varepsilon}^\theta \ln(2 + c_{2\varepsilon}) & \leq  \frac{\zeta}{2}\int_\Omega c_{2\varepsilon}^2 + \frac{b_\chi^2}{2\zeta} \int_\Omega |\Delta \chi_\varepsilon|^2  + M_{\alpha_1} \int_{\{c_{2\varepsilon} > 1\}}\ln c_{2\varepsilon}  +  C
  \end{align}
for all $\varepsilon \in (0, 1)$ and all $t > 0$.
\end{Lemma}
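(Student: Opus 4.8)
The plan is to mirror the proof of \cref{entropy1}, testing the second equation of \eqref{sys1} with $\ln c_{2\varepsilon}$. Starting from the identity
\begin{align*}
\frac{d}{dt}\int_\Omega c_{2\varepsilon}\ln c_{2\varepsilon} = \int_\Omega \partial_t c_{2\varepsilon}\,\ln c_{2\varepsilon} + \int_\Omega \partial_t c_{2\varepsilon},
\end{align*}
I would substitute the $c_{2\varepsilon}$-equation and integrate by parts, exploiting the homogeneous Neumann conditions $\partial_\nu c_{2\varepsilon} = \partial_\nu\chi_\varepsilon = 0$. The diffusion term yields $a_2\int_\Omega \Delta c_{2\varepsilon}\ln c_{2\varepsilon} = -a_2\int_\Omega |\nabla c_{2\varepsilon}|^2/c_{2\varepsilon}$, which I move to the left-hand side to account for the gradient term in \eqref{eyc1}. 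The zeroth-order contributions arising from $\int_\Omega\partial_t c_{2\varepsilon}$ are either nonpositive (the $-\alpha_2$ and $-\varepsilon c_{2\varepsilon}^\theta$ terms), and thus discarded, or bounded by the constant $M_{\alpha_1}|\Omega|$ (the $\alpha_1$ production term, using $\alpha_1\le M_{\alpha_1}$ and $c_{1\varepsilon}/(1+c_{1\varepsilon})<1$).

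The genuinely new step, compared to \cref{entropy1}, is the treatment of the chemotactic term, which must be converted into the $\int_\Omega|\Delta\chi_\varepsilon|^2$ expression on the right of \eqref{eyc1}. I would integrate by parts twice: the first integration moves the divergence onto $\ln c_{2\varepsilon}$ and, after $\nabla\ln c_{2\varepsilon} = \nabla c_{2\varepsilon}/c_{2\varepsilon}$, collapses to
\begin{align*}
- b_\chi\int_\Omega \nabla\cdot(c_{2\varepsilon}\nabla\chi_\varepsilon)\,\ln c_{2\varepsilon} = b_\chi\int_\Omega \nabla\chi_\varepsilon\cdot\nabla c_{2\varepsilon};
\end{align*}
a second integration by parts (again using $\partial_\nu\chi_\varepsilon=0$) turns this into $-b_\chi\int_\Omega c_{2\varepsilon}\Delta\chi_\varepsilon$. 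Young's inequality with the free parameter $\zeta>0$ then gives exactly
\begin{align*}
-b_\chi\int_\Omega c_{2\varepsilon}\Delta\chi_\varepsilon \le \frac{\zeta}{2}\int_\Omega c_{2\varepsilon}^2 + \frac{b_\chi^2}{2\zeta}\int_\Omega|\Delta\chi_\varepsilon|^2,
\end{align*}
which reproduces the first two terms on the right-hand side of \eqref{eyc1}.

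It remains to estimate the logarithmic reaction terms coming from testing with $\ln c_{2\varepsilon}$. For the production term I would discard the part where $\ln c_{2\varepsilon}\le 0$ and bound the rest by $M_{\alpha_1}\int_{\{c_{2\varepsilon}>1\}}\ln c_{2\varepsilon}$, which is precisely the remaining right-hand term of \eqref{eyc1}. For $-\int_\Omega\alpha_2(\chi_\varepsilon)\tfrac{c_{2\varepsilon}}{1+c_{2\varepsilon}}\ln c_{2\varepsilon}$ I would split the domain: on $\{c_{2\varepsilon}\ge 1\}$ the integrand is nonpositive, while on $\{c_{2\varepsilon}<1\}$ the bounds $\alpha_2\le M_{\alpha_2}$, $\tfrac{1}{1+c_{2\varepsilon}}\le 1$, and $-s\ln s\le \tfrac1e$ yield the constant $\tfrac{M_{\alpha_2}}{e}|\Omega|$. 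Finally, the term $-\varepsilon\int_\Omega c_{2\varepsilon}^\theta\ln c_{2\varepsilon}$ is handled exactly as in \eqref{stin2}, invoking \cite[Lemma 4.2]{stinner2014global} to obtain $-\tfrac{\varepsilon}{2}\int_\Omega c_{2\varepsilon}^\theta\ln(2+c_{2\varepsilon}) + C$, with the first summand moving to the left-hand side. Collecting all contributions and absorbing the various $\varepsilon$- and $t$-uniform constants into a single $C$ delivers \eqref{eyc1}.

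The main obstacle, though a mild one, is ensuring the taxis term is transformed into the correct $\Delta\chi_\varepsilon$ form; this hinges on performing the second integration by parts and crucially on the boundary condition $\partial_\nu\chi_\varepsilon=0$ to discard the boundary integral. The retention of the free parameter $\zeta$ is deliberate: it will be tuned later when this estimate is combined with the bound from \cref{entropy1}, and the appearance of $\int_\Omega|\Delta\chi_\varepsilon|^2$ on the right-hand side signals that second-order ($W^{2,2}$) control of $\chi_\varepsilon$, obtainable via maximal regularity from the already established bounds, will be needed to close the entropy estimate.
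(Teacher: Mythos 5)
Your proposal is correct and follows essentially the same route as the paper: testing with $\ln c_{2\varepsilon}$, a second integration by parts on the taxis term to reach $-b_\chi\int_\Omega c_{2\varepsilon}\Delta\chi_\varepsilon$ followed by Young's inequality with the free parameter $\zeta$, and treatment of the logarithmic reaction and $\varepsilon c_{2\varepsilon}^\theta$ terms exactly as in \cref{entropy1} via $-s\ln s\le\tfrac1e$ and \cite[Lemma 4.2]{stinner2014global}. The paper's proof is precisely this, with the non-taxis terms dispatched by the phrase ``handling the remaining terms as in Lemma \ref{entropy1}.''
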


\begin{proof}
  We test the second equation of \eqref{sys1} with $\ln c_{2\varepsilon}$ and integrate by parts to have
   \begin{align}
    \label{eyc2}
    \nonumber & \dfrac{d}{dt} \int_\Omega c_{2 \varepsilon} \ln c_{2 \varepsilon} = - a_2\int_\Omega \dfrac{|\nabla c_{2 \varepsilon}|^2}{c_{2\varepsilon}} +  b_\chi \int_\Omega \nabla \chi_\varepsilon \cdot \nabla c_{2\varepsilon} + \int_\Omega \alpha_1(\chi_\varepsilon) \tfrac{c_{1\varepsilon}}{1 + c_{1\varepsilon}} \ln c_{2 \varepsilon} \\[5pt]
    & - \int_\Omega \alpha_2(\chi_\varepsilon)  \tfrac{c_{2\varepsilon}}{1 + c_{2\varepsilon}} \ln c_{2 \varepsilon}  - \varepsilon \int_\Omega c_{2\varepsilon}^\theta \ln c_{2 \varepsilon} + \int_\Omega \alpha_1(\chi_\varepsilon) \tfrac{c_{1\varepsilon}}{1 + c_{1\varepsilon}} - \int_\Omega \alpha_2(\chi_\varepsilon)  \tfrac{c_{2\varepsilon}}{1 + c_{2\varepsilon}} - \varepsilon \int_\Omega c_{2\varepsilon}^\theta
   \end{align}
  for all $t > 0$.\\[-1.5ex]
  
  \noindent
  Consider the term arising from the taxis interaction, $b_\chi \int_\Omega \nabla c_{2\varepsilon} \cdot \nabla \chi_\varepsilon$. Integrating it by parts once more and applying Young's inequality then yields for a $\zeta > 0$:
  \begin{align}
  \label{bchi2}
      b_\chi \int_\Omega \nabla c_{2\varepsilon} \cdot \nabla \chi_\varepsilon = -b_\chi \int_\Omega c_{2\varepsilon} \Delta \chi_\varepsilon \leq \frac{\zeta}{2}\int_\Omega c_{2\varepsilon}^2 + \frac{b_\chi^2}{2\zeta} \int_\Omega |\Delta \chi_\varepsilon|^2 \quad \text{for all}~ t > 0.
  \end{align}
 Inserting \eqref{bchi2} into \eqref{eyc2} and handling the remaining terms as in Lemma \ref{entropy1}, we obtain \eqref{eyc1}.
\end{proof}

\begin{Lemma}
    \label{chi_lemma1}
    For all $t \in (0, T)$ the following holds true:
    \begin{align}
        \label{chi_ent}
    \frac{1}{2}\frac{d}{dt}\int_\Omega |\nabla \chi_\varepsilon|^2 + \frac{1}{2} \int_\Omega |\nabla \chi_\varepsilon|^2 + \frac{D_\chi}{2} \int_\Omega |\Delta \chi_\varepsilon|^2  \leq \frac{2a_\chi^2\chi_\infty^2}{D_\chi}\int_\Omega c_{1\varepsilon}^2 + \frac{2a_\chi^2\chi_\infty^2}{D_\chi}\int_\Omega c_{2\varepsilon}^2 + C^\ast
    \end{align}
    where $C^\ast := \frac{2}{D_\chi}\left(M_\chi^2 + \frac{\chi_\infty^2}{2}\right)|\Omega|$.
\end{Lemma}
\begin{proof}
    We test the third equation in \eqref{sys1} with $- \Delta \chi_\varepsilon$, which results in
    \begin{align*}
         \frac{1}{2}\frac{d}{dt}\int_\Omega |\nabla \chi_\varepsilon|^2 + D_\chi \int_\Omega |\Delta \chi_\varepsilon|^2  & = a_\chi \int_\Omega (c_{1\varepsilon} + c_{2\varepsilon}) \chi_\varepsilon \cdot \Delta \chi_\varepsilon - \int_\Omega F_\varepsilon(\chi_\varepsilon) \Delta \chi_\varepsilon \qquad \text{for all}~ t > 0.
    \end{align*}
    Adding $\tfrac{1}{2}\int_\Omega |\nabla \chi_\varepsilon|^2$ to both sides and using \eqref{mol_bound_short}, \eqref{chi_bound2}, together with Young's inequality in the standard manner, we obtain
    \begin{align*}
    & \frac{1}{2}\frac{d}{dt}\int_\Omega |\nabla \chi_\varepsilon|^2 + \frac{1}{2}\int_\Omega |\nabla \chi_\varepsilon|^2 + D_\chi \int_\Omega |\Delta \chi_\varepsilon|^2 = a_\chi \int_\Omega (c_{1\varepsilon} + c_{2\varepsilon}) \chi_\varepsilon \Delta \chi_\varepsilon -\int_\Omega F_\varepsilon(\chi_\varepsilon) \Delta \chi_\varepsilon + \frac{1}{2} \int_\Omega \nabla \chi_\varepsilon \cdot \nabla \chi_\varepsilon \\[5pt]
    & \leq a_\chi \chi_\infty \int_\Omega c_{1\varepsilon} \Delta \chi_\varepsilon + a_\chi \chi_\infty \int_\Omega c_{2\varepsilon} \Delta \chi_\varepsilon -  \int_\Omega F_\varepsilon(\chi_\varepsilon)\Delta \chi_\varepsilon - \frac{1}{2}\int_\Omega \chi_\varepsilon \Delta \chi_\varepsilon \\[5pt]
    & \leq \frac{D_\chi}{2} \int_\Omega |\Delta \chi_\varepsilon|^2 + \frac{2a_\chi^2\chi_\infty^2}{D_\chi}\int_\Omega c_{1\varepsilon}^2 + \frac{2a_\chi^2\chi_\infty^2}{D_\chi}\int_\Omega c_{2\varepsilon}^2 + \frac{2}{D_\chi}\left(M_\chi^2 + \frac{\chi_\infty^2}{4}\right)|\Omega|
    \end{align*}
 for all $t \in (0, T)$.
\end{proof}

\begin{Lemma}
  \label{tau_lemma}
 There exists a constant $C > 0$ such that any solution of \eqref{sys1} satisfies
  \begin{align}
   \label{tau_ent}
    \frac{1}{2} \frac{d}{dt} \int_\Omega \frac{|\nabla \tau_\varepsilon|^2}{\tau_\varepsilon} +  \frac{\mu}{2} \int_\Omega \frac{|\nabla \tau_\varepsilon|^2}{\tau_\varepsilon} +\frac{\varepsilon}{2} \int_\Omega \tau_\varepsilon |D^2 \ln \tau_\varepsilon|^2 +  \delta \int_\Omega \nabla \tau_\varepsilon \cdot \nabla c_{1\varepsilon} + \frac{\delta}{2} \int_\Omega c_{1 \varepsilon}\frac{|\nabla \tau_\varepsilon|^2}{\tau_\varepsilon} \leq \int_\Omega \frac{|\nabla c_{2\varepsilon}|^2}{c_{2\varepsilon}} + C
  \end{align}
 for all $\varepsilon \in (0, 1)$ and all $t > 0$.
\end{Lemma}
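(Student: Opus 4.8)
The plan is to differentiate the functional $\int_\Omega\frac{|\nabla\tau_\varepsilon|^2}{\tau_\varepsilon}$ along the fourth equation of \eqref{sys1}, to isolate the contribution of the $\varepsilon$-diffusion (which supplies the dissipative term $\int_\Omega\tau_\varepsilon|D^2\ln\tau_\varepsilon|^2$), and to dispatch the reaction terms by elementary pointwise estimates. For each fixed $\varepsilon\in(0,1)$ the component $\tau_\varepsilon$ is a positive classical solution on $\bar\Omega\times[0,T]$, hence attains a positive minimum on compact time intervals, so every quotient below is well defined and differentiation under the integral sign is legitimate. The starting point is
\[
  \frac{1}{2}\frac{d}{dt}\int_\Omega\frac{|\nabla\tau_\varepsilon|^2}{\tau_\varepsilon}
  =\int_\Omega\frac{\nabla\tau_\varepsilon\cdot\nabla\partial_t\tau_\varepsilon}{\tau_\varepsilon}
  -\frac{1}{2}\int_\Omega\frac{|\nabla\tau_\varepsilon|^2\,\partial_t\tau_\varepsilon}{\tau_\varepsilon^2},
\]
into which I substitute $\partial_t\tau_\varepsilon=\varepsilon\Delta\tau_\varepsilon-\delta c_{1\varepsilon}\tau_\varepsilon-\mu\tau_\varepsilon+r_\varepsilon$ and split the result into a diffusion part and a reaction part.

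For the diffusion part, two integrations by parts exploiting $\partial_\nu\tau_\varepsilon=0$ lead to the standard identity
\[
  \varepsilon\Big(\int_\Omega\frac{\nabla\tau_\varepsilon\cdot\nabla\Delta\tau_\varepsilon}{\tau_\varepsilon}
  -\tfrac12\int_\Omega\frac{|\nabla\tau_\varepsilon|^2\Delta\tau_\varepsilon}{\tau_\varepsilon^2}\Big)
  =-\frac{\varepsilon}{2}\int_\Omega\tau_\varepsilon|D^2\ln\tau_\varepsilon|^2
  +\frac{\varepsilon}{2}\int_{\partial\Omega}\frac{1}{\tau_\varepsilon}\frac{\partial|\nabla\tau_\varepsilon|^2}{\partial\nu}.
\]
Transferring $-\frac{\varepsilon}{2}\int_\Omega\tau_\varepsilon|D^2\ln\tau_\varepsilon|^2$ to the left-hand side produces precisely the third term in \eqref{tau_ent}; the boundary integral is deferred to the last paragraph.

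For the reaction part, writing $R:=-\delta c_{1\varepsilon}\tau_\varepsilon-\mu\tau_\varepsilon+r_\varepsilon$ and expanding $\nabla R$, a direct computation gives
\begin{align*}
  \int_\Omega\frac{\nabla\tau_\varepsilon\cdot\nabla R}{\tau_\varepsilon}-\frac12\int_\Omega\frac{|\nabla\tau_\varepsilon|^2 R}{\tau_\varepsilon^2}
  ={}&-\delta\int_\Omega\nabla\tau_\varepsilon\cdot\nabla c_{1\varepsilon}
  -\frac{\delta}{2}\int_\Omega c_{1\varepsilon}\frac{|\nabla\tau_\varepsilon|^2}{\tau_\varepsilon}
  -\frac{\mu}{2}\int_\Omega\frac{|\nabla\tau_\varepsilon|^2}{\tau_\varepsilon}\\
  &+\int_\Omega\frac{\nabla\tau_\varepsilon\cdot\nabla r_\varepsilon}{\tau_\varepsilon}
  -\frac12\int_\Omega\frac{r_\varepsilon|\nabla\tau_\varepsilon|^2}{\tau_\varepsilon^2}.
\end{align*}
The first three summands, moved to the left, are exactly the $\delta$- and $\mu$-terms of \eqref{tau_ent}. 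For the remaining two I keep them together: with $r_\varepsilon=\frac{c_{2\varepsilon}}{1+c_{2\varepsilon}}$ and $\nabla r_\varepsilon=\frac{\nabla c_{2\varepsilon}}{(1+c_{2\varepsilon})^2}$, a completion of the square in the vector $\nabla\tau_\varepsilon/\tau_\varepsilon$ (equivalently, maximizing the resulting concave quadratic) gives the pointwise bound
\[
  \frac{\nabla\tau_\varepsilon\cdot\nabla r_\varepsilon}{\tau_\varepsilon}
  -\frac12\frac{r_\varepsilon|\nabla\tau_\varepsilon|^2}{\tau_\varepsilon^2}
  \le\frac{|\nabla c_{2\varepsilon}|^2}{2\,c_{2\varepsilon}(1+c_{2\varepsilon})^3}
  \le\frac{|\nabla c_{2\varepsilon}|^2}{2\,c_{2\varepsilon}},
\]
so that these terms contribute at most $\int_\Omega\frac{|\nabla c_{2\varepsilon}|^2}{c_{2\varepsilon}}$, which is the right-hand side of \eqref{tau_ent}. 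It is essential here to retain the term $-\frac12\int_\Omega\frac{r_\varepsilon|\nabla\tau_\varepsilon|^2}{\tau_\varepsilon^2}$ rather than to discard it, since it is exactly what absorbs the $\tau_\varepsilon$-gradient produced by the coupling to $r_\varepsilon$.

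The main obstacle is the boundary integral $\frac{\varepsilon}{2}\int_{\partial\Omega}\frac{1}{\tau_\varepsilon}\partial_\nu|\nabla\tau_\varepsilon|^2$, which need not be nonpositive because $\Omega$ is only assumed smooth, not convex. I would control it by the pointwise estimate $\partial_\nu|\nabla\tau_\varepsilon|^2\le C_\Omega|\nabla\tau_\varepsilon|^2$ on $\partial\Omega$, valid under the Neumann condition, followed by a trace–interpolation inequality that splits off an arbitrarily small fraction of $\int_\Omega\tau_\varepsilon|D^2\ln\tau_\varepsilon|^2$, to be absorbed into the dissipation term already present on the left, with the lower-order remainder collected into the constant $C$, in the spirit of the treatment in \cite{stinner2014global}. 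Collecting the diffusion and reaction contributions then yields \eqref{tau_ent}.
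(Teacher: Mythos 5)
Your proposal follows the same route as the paper's own proof: you differentiate the same functional, substitute the fourth equation of \eqref{sys1}, arrive at exactly the paper's decomposition (your $\delta$- and $\mu$-terms coincide with those in the paper's computation), and your completion-of-the-square bound for the pair $\int_\Omega\frac{\nabla\tau_\varepsilon\cdot\nabla c_{2\varepsilon}}{\tau_\varepsilon(1+c_{2\varepsilon})^2}-\frac12\int_\Omega\frac{|\nabla\tau_\varepsilon|^2}{\tau_\varepsilon^2}\frac{c_{2\varepsilon}}{1+c_{2\varepsilon}}$ is precisely the paper's Young-inequality step (taken there from \cite{surulescu2021does}), producing the same bound $\frac12\int_\Omega\frac{|\nabla c_{2\varepsilon}|^2}{c_{2\varepsilon}(1+c_{2\varepsilon})^3}\le\int_\Omega\frac{|\nabla c_{2\varepsilon}|^2}{c_{2\varepsilon}}$. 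For the $\varepsilon$-diffusion part the paper simply cites \cite[Lemma 4.2, (4.9)--(4.12)]{dai2022global}, whereas you spell out the same mechanism: Hessian identity, curvature bound $\partial_\nu|\nabla\tau_\varepsilon|^2\le C_\Omega|\nabla\tau_\varepsilon|^2$ on $\partial\Omega$, and trace--interpolation absorption. That outline is the right one.

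There is, however, one quantitative slip that prevents the argument, as written, from closing. The correct identity for positive $w$ with $\partial_\nu w=0$ on $\partial\Omega$ is
\begin{align*}
\int_\Omega\frac{\nabla w\cdot\nabla\Delta w}{w}-\frac12\int_\Omega\frac{|\nabla w|^2}{w^2}\Delta w
=-\int_\Omega w|D^2\ln w|^2+\frac12\int_{\partial\Omega}\frac{1}{w}\,\partial_\nu|\nabla w|^2,
\end{align*}
i.e.\ the interior dissipation carries the full factor $1$, not the factor $\tfrac12$ appearing in your display. This factor is not cosmetic. In your scheme you transfer the entire dissipation you believe you have, namely $\frac{\varepsilon}{2}\int_\Omega\tau_\varepsilon|D^2\ln\tau_\varepsilon|^2$, to the left-hand side so as to produce the third term of \eqref{tau_ent}, and then in your final paragraph you propose to control the boundary integral by absorbing ``an arbitrarily small fraction'' of $\int_\Omega\tau_\varepsilon|D^2\ln\tau_\varepsilon|^2$ into ``the dissipation already present on the left''. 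But that dissipation is exactly the term the lemma must retain: absorbing a fraction $\eta$ of it leaves only $\bigl(\tfrac12-\eta\bigr)\varepsilon\int_\Omega\tau_\varepsilon|D^2\ln\tau_\varepsilon|^2$, strictly weaker than the coefficient $\tfrac{\varepsilon}{2}$ asserted in \eqref{tau_ent}; you are spending dissipation you have already committed. With the full-strength identity the difficulty vanishes: keep $\frac{\varepsilon}{2}\int_\Omega\tau_\varepsilon|D^2\ln\tau_\varepsilon|^2$ for the left-hand side of \eqref{tau_ent} and use the remaining $\frac{\varepsilon}{2}\int_\Omega\tau_\varepsilon|D^2\ln\tau_\varepsilon|^2$ to absorb the boundary term, whose trace--interpolation remainder is uniformly bounded thanks to $\|\tau_\varepsilon\|_{L^\infty(\Omega)}\le\tau_\ast$ and $\varepsilon<1$. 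This is precisely the bookkeeping behind the paper's estimate of the diffusion part quoted from \cite{dai2022global}, so your proof is repaired by replacing your ``standard identity'' with the correct one; everything else you wrote matches the paper's argument.
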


\begin{proof}
  Using the fourth equation in \eqref{sys1} and the positivity of the solution components, we can calculate
  \begin{align}
  \label{chi_tau1}
    \nonumber & \frac{1}{2} \frac{d}{dt} \int_\Omega \frac{|\nabla \tau_\varepsilon|^2}{\tau_\varepsilon} = \int_\Omega \frac{\nabla \tau_\varepsilon \cdot \nabla \tau_{\varepsilon t}}{\tau_\varepsilon} - \frac{1}{2} \int_\Omega \frac{|\nabla \tau_\varepsilon|^2}{\tau_\varepsilon^2} \tau_{\varepsilon t}   \\[5pt]
    \nonumber & = \int_\Omega \frac{\nabla \tau_\varepsilon }{\tau_\varepsilon} \cdot \nabla (\varepsilon \Delta \tau_\varepsilon - \delta c_{1\varepsilon}  \tau_\varepsilon - \mu \tau_\varepsilon +  \tfrac{c_{2\varepsilon}}{1 + c_{2\varepsilon}})
   - \frac{1}{2} \int_\Omega \frac{|\nabla \tau_\varepsilon|^2}{\tau_\varepsilon^2} (\varepsilon \Delta \tau_\varepsilon - \delta c_{1\varepsilon}  \tau_\varepsilon - \mu \tau_\varepsilon +  \tfrac{c_{2\varepsilon}}{1 + c_{2\varepsilon}} )   \\[5pt]
   \nonumber & = \varepsilon \int_\Omega \dfrac{\nabla \tau_\varepsilon}{\tau_\varepsilon} \cdot \nabla \Delta \tau_\varepsilon - \dfrac{\varepsilon}{2} \int_\Omega \dfrac{|\nabla \tau_\varepsilon|^2}{\tau_\varepsilon^2} \Delta \tau_\varepsilon - \delta \int_\Omega \nabla \tau_\varepsilon \cdot \nabla c_{1\varepsilon} - \frac{\mu}{2} \int_\Omega \frac{|\nabla \tau_\varepsilon|^2}{\tau_\varepsilon} - \frac{\delta}{2} \int_\Omega c_{1 \varepsilon} \frac{|\nabla \tau_\varepsilon|^2}{\tau_\varepsilon} \\[5pt]
   & + \int_\Omega \frac{\nabla \tau_\varepsilon \cdot \nabla c_{2\varepsilon}}{\tau_\varepsilon (1 + c_{2\varepsilon})^2} - \frac{1}{2} \int_\Omega \frac{|\nabla \tau_\varepsilon|^2}{\tau_\varepsilon^2} \frac{c_{2\varepsilon}}{1 + c_{2\varepsilon}}\quad \text{for all}~ t > 0.
  \end{align}
  We can handle the first two terms on the right-hand side of \eqref{chi_tau1} in the same manner as in \cite[Lemma 4.2, (4.9)-(4.12)]{dai2022global}
  \begin{equation}
    \label{chi_tau2}
    \varepsilon \int_{\Omega} \dfrac{\nabla \tau_\varepsilon}{\tau_\varepsilon} \cdot \nabla \Delta \tau_\varepsilon - \dfrac{\varepsilon}{2} \int_\Omega \dfrac{|\nabla \tau_\varepsilon|^2}{\tau_\varepsilon^2} \Delta \tau_\varepsilon \leq - \frac{\varepsilon}{2} \int_\Omega \tau_\varepsilon |D^2 \ln \tau_\varepsilon|^2 + C_1 .
  \end{equation}
   To handle the pair $- \frac{1}{2} \int_\Omega \frac{|\nabla \tau_\varepsilon|^2}{\tau_\varepsilon^2} \frac{c_{2\varepsilon}}{1 + c_{2\varepsilon}} + \int_\Omega \frac{\nabla \tau_\varepsilon \cdot \nabla c_{2\varepsilon}}{\tau_\varepsilon (1 + c_{2\varepsilon})^2} $ we will rely on the approach developed in \cite{surulescu2021does} (cf. the assumption in (1.9) and the result derived in (3.13) therein). It is evident that $\frac{1}{(1 + c_{2\varepsilon})^3} < 1$ for $c_{2\varepsilon} \geq 0$, hence
  \begin{align}
  \label{eh3}
   \nonumber & - \dfrac{1}{2} \int_\Omega \dfrac{|\nabla \tau_\varepsilon|^2}{\tau_\varepsilon^2} \dfrac{c_{2\varepsilon}}{1 + c_{2\varepsilon}} + \int_\Omega \dfrac{\nabla \tau_\varepsilon \cdot \nabla c_{2\varepsilon}}{\tau_\varepsilon (1 + c_{2\varepsilon})^2}\\[5pt]
    & \nonumber \leq - \dfrac{1}{2} \int_\Omega \dfrac{|\nabla \tau_\varepsilon|^2}{\tau_\varepsilon^2} \dfrac{c_{2\varepsilon}}{1 + c_{2\varepsilon}} + \dfrac{1}{2} \int_\Omega \dfrac{|\nabla \tau_\varepsilon|^2}{\tau_\varepsilon^2} \dfrac{c_{2\varepsilon}}{1 + c_{2\varepsilon}} + \dfrac{1}{2} \int_\Omega\dfrac{1}{(1 + c_{2\varepsilon})^4} \cdot \dfrac{1 + c_{2\varepsilon}}{c_{2\varepsilon}} |\nabla c_{2\varepsilon}|^2                                              \\[5pt]
  &= \dfrac{1}{2} \int_\Omega\dfrac{1}{(1 + c_{2\varepsilon})^3} \cdot \dfrac{|\nabla c_{2\varepsilon}|^2}{c_{2\varepsilon}} \leq  \int_\Omega \dfrac{|\nabla c_{2\varepsilon}|^2}{c_{2\varepsilon}}
  \end{align}
for all $t > 0$. Inserting \eqref{chi_tau2}-\eqref{eh3} into \eqref{chi_tau1} yields \eqref{tau_ent}.
\end{proof}

\noindent
We are now in a position to develop an entropy-type functional for the model under study.
\begin{Lemma}
     \label{entropy_lemma}
      Let $T > 0$. Then there exists a $C(T) > 0$ such that for all $\varepsilon \in (0, 1)$ any solution to \eqref{sys1} satisfies
  \begin{align}
    \label{entropy_main1}
    \frac{a_2 \delta}{4 b_\tau} \int_\Omega \left(c_{1\varepsilon} \ln c_{1\varepsilon} + \tfrac{1}{e}\right) + \int_\Omega \left(c_{2 \varepsilon} \ln c_{2 \varepsilon} + \tfrac{1}{e}\right) + \frac{b_\chi^2}{D_\chi \zeta} \int_\Omega |\nabla \chi_\varepsilon|^2 + \frac{a_2}{8} \int_\Omega \frac{|\nabla \tau_\varepsilon|^2}{\tau_\varepsilon} \leq C(T)
     \end{align}
     for all $t \in (0, T)$ and
    \begin{align}
    \label{entropy_main2}
    \nonumber & \frac{a_1 a_2 \delta}{8 b_\tau} \int_0^T \int_\Omega \frac{|\nabla c_{1\varepsilon}|^2}{c_{1\varepsilon}} + \frac{a_2}{8}  \int_0^T \int_\Omega \dfrac{|\nabla c_{2 \varepsilon}|^2}{c_{2\varepsilon}}+ \frac{a_2 \delta \beta}{8 b_\tau} \int_0^T \int_\Omega c_{1\varepsilon}^2 \ln(2 + c_{1\varepsilon}) + \frac{a_2 \delta \varepsilon}{8 b_\tau} \int_0^T \int_\Omega c_{1\varepsilon}^\theta \ln(2 + c_{1\varepsilon}) \\[5pt]
    &  + \frac{b_\chi^2}{2 \zeta} \int_0^T \int_\Omega |\Delta \chi_\varepsilon|^2  + \frac{a_2 \delta}{8} \int_0^T \int_\Omega c_{1\varepsilon} \frac{|\nabla \tau_\varepsilon|^2}{\tau_\varepsilon}   + \frac{\varepsilon}{2} \int_0^T \int_\Omega c_{2\varepsilon}^\theta \ln(2 + c_{2\varepsilon})  +\frac{a_2 \varepsilon}{8} \int_0^T \int_\Omega \tau_\varepsilon |D^2 \ln \tau_\varepsilon|^2 \leq C(T).
  \end{align}
\end{Lemma}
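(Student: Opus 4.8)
The plan is to test each of the four differential inequalities in Lemmas \ref{entropy1}--\ref{tau_lemma} with a fixed positive weight and add them, so as to reconstruct the functional on the left of \eqref{entropy_main1}. Write the combination as $\lambda_1\cdot\eqref{c1entro_ineq1}+\lambda_2\cdot\eqref{eyc1}+\lambda_3\cdot\eqref{chi_ent}+\lambda_4\cdot\eqref{tau_ent}$ and read off from \eqref{entropy_main1} that the weights must be $\lambda_1=\frac{a_2\delta}{4b_\tau}$, $\lambda_2=1$, $\lambda_3=\frac{2b_\chi^2}{D_\chi\zeta}$ and $\lambda_4=\frac{a_2}{4}$ (recall that \eqref{chi_ent} and \eqref{tau_ent} carry a prefactor $\tfrac12$). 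The decisive point is that these weights are engineered to make the genuinely cross-diffusive couplings disappear: the haptotaxis terms $\lambda_1 b_\tau\int_\Omega\nabla\tau_\varepsilon\cdot\nabla c_{1\varepsilon}$ (from the right of \eqref{c1entro_ineq1}) and $\lambda_4\delta\int_\Omega\nabla\tau_\varepsilon\cdot\nabla c_{1\varepsilon}$ (from the left of \eqref{tau_ent}) cancel exactly because $\lambda_1 b_\tau=\lambda_4\delta=\frac{a_2\delta}{4}$. In the same vein, the Fisher-type term $\lambda_4\int_\Omega\frac{|\nabla c_{2\varepsilon}|^2}{c_{2\varepsilon}}$ transferred by \eqref{tau_ent} is dominated by the dissipation $\lambda_2 a_2\int_\Omega\frac{|\nabla c_{2\varepsilon}|^2}{c_{2\varepsilon}}$ of \eqref{eyc1} (leaving $\tfrac{3a_2}{4}$, of which we retain $\tfrac{a_2}{8}$), and the term $\lambda_2\frac{b_\chi^2}{2\zeta}\int_\Omega|\Delta\chi_\varepsilon|^2$ of \eqref{eyc1} is dominated by $\lambda_3\frac{D_\chi}{2}\int_\Omega|\Delta\chi_\varepsilon|^2$ of \eqref{chi_ent}, leaving precisely $\frac{b_\chi^2}{2\zeta}\int_\Omega|\Delta\chi_\varepsilon|^2$. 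After these cancellations the left-hand side already carries all the dissipation terms listed in \eqref{entropy_main2}, and each summand of the functional is nonnegative thanks to the shifts $+\tfrac1e$ (since $s\ln s\ge-\tfrac1e$).

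It then remains to control the residual right-hand side, which after these reductions has the form $K_1\int_\Omega c_{1\varepsilon}^2+K_2\int_\Omega c_{2\varepsilon}^2$ plus manifestly harmless terms. The linear and logarithmic leftovers are immediate: $\lambda_1\beta\int_\Omega c_{1\varepsilon}$ and $M_{\alpha_2}\int_{\{c_{1\varepsilon}>1\}}\ln c_{1\varepsilon}$ are bounded by the $L^1$ estimate \eqref{l1c1} (using $\ln s\le s$), while $M_{\alpha_1}\int_{\{c_{2\varepsilon}>1\}}\ln c_{2\varepsilon}$ and $\lambda_1\tfrac{\beta}{e}\int_\Omega c_{2\varepsilon}$ are bounded by \eqref{l1c2}; all contribute only a $C(T)$. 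The quadratic term $K_1\int_\Omega c_{1\varepsilon}^2$ needs no absorption at all: by \eqref{c1_bound1} (summed over unit time intervals) it is uniformly integrable in time, $\int_0^T\!\int_\Omega c_{1\varepsilon}^2\le C(T)$ with a constant independent of $\varepsilon$, so it may be kept as an admissible forcing.

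The heart of the matter, and the step I expect to be the main obstacle, is the term $K_2\int_\Omega c_{2\varepsilon}^2$, where $K_2=\frac{\zeta}{2}+\frac{4a_\chi^2\chi_\infty^2 b_\chi^2}{D_\chi^2\zeta}$ originates from the Young splitting \eqref{bchi2} together with the test of the $\chi$-equation against $-\Delta\chi_\varepsilon$ in Lemma \ref{chi_lemma1}. Since $c_{2\varepsilon}$ obeys no quadratic ($\varepsilon$-uniform) damping---the chondrocytes do not proliferate---there is no analogue of \eqref{c1_bound1} for $c_{2\varepsilon}$, and $\int_0^T\!\int_\Omega c_{2\varepsilon}^2$ is not available a priori. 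I would therefore absorb $K_2\int_\Omega c_{2\varepsilon}^2$ into the surplus Fisher information $\sim\int_\Omega\frac{|\nabla c_{2\varepsilon}|^2}{c_{2\varepsilon}}$ through the Gagliardo--Nirenberg inequality applied to $w=\sqrt{c_{2\varepsilon}}$, namely $\|w\|_{L^4(\Omega)}^4\le C\|\nabla w\|_{L^2(\Omega)}^{n}\|w\|_{L^2(\Omega)}^{4-n}+C\|w\|_{L^2(\Omega)}^4$, in which $\|w\|_{L^2(\Omega)}^2=\int_\Omega c_{2\varepsilon}$ is controlled by \eqref{l1c2}; the hypothesis $n\le3$ is exactly what is needed for the resulting gradient exponent to stay compatible with the available linear dissipation, and $\zeta$ (hence $\lambda_3$) is fixed accordingly. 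For $n\le2$ this is a routine absorption, whereas the borderline case $n=3$ is the tight one and is where I would spend the most care, tuning $\zeta$ and, if necessary, coupling the absorption to the pointwise entropy bound. Once $K_2\int_\Omega c_{2\varepsilon}^2$ is absorbed, the combined inequality reads $\frac{d}{dt}\mathcal E_\varepsilon(t)+\mathcal D_\varepsilon(t)\le h(t)+C$ with $\int_0^T h\le C(T)$; integrating over $(0,t)$ and using $\mathcal E_\varepsilon\ge0$ yields the pointwise bound \eqref{entropy_main1}, while integrating over $(0,T)$ and discarding $\mathcal E_\varepsilon(T)\ge0$ yields the dissipation bound \eqref{entropy_main2}.
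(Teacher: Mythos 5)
Your proposal is correct and follows essentially the same route as the paper's own proof: the identical weights $\tfrac{a_2\delta}{4b_\tau},\,1,\,\tfrac{2b_\chi^2}{D_\chi\zeta},\,\tfrac{a_2}{4}$, the same exact cancellation of the haptotactic cross terms $\int_\Omega \nabla\tau_\varepsilon\cdot\nabla c_{1\varepsilon}$, the same decision to keep $K_1\int_\Omega c_{1\varepsilon}^2$ as a forcing term controlled by \eqref{c1_bound1}, and the same Gagliardo--Nirenberg absorption of $K_2\int_\Omega c_{2\varepsilon}^2$ into the Fisher information of $c_{2\varepsilon}$ with $\zeta$ tuned exactly as in \eqref{c235}. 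The only deviation is the concluding step: the paper additionally manufactures a damping term $\varrho\,\mathcal{E}_\varepsilon$ (by absorbing $\xi_i\int_\Omega c_{i\varepsilon}\ln c_{i\varepsilon}$, $i=1,2$, into the respective Fisher informations) so as to invoke the Gronwall-type result \cite[Lemma 3.4]{stinner2014global}, whereas you integrate $\mathcal{E}_\varepsilon'+\mathcal{D}_\varepsilon\le h+C$ directly in time using $\mathcal{E}_\varepsilon\ge 0$, $\mathcal{D}_\varepsilon\ge 0$, and the uniform bound on $\mathcal{E}_\varepsilon(0)$ provided by \eqref{reg_assumptions} --- a mild simplification that is fully sufficient for the $T$-dependent bounds \eqref{entropy_main1}--\eqref{entropy_main2}.
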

\begin{proof}
 We combine \eqref{l1c1} and \eqref{c1entro_ineq1} this results in
\begin{align}
\label{new_change1}
   & \nonumber  \frac{d}{dt} \int_\Omega \left(c_{1\varepsilon} \ln c_{1\varepsilon} + \tfrac{1}{e}\right) + \xi_1  \int_\Omega \left(c_{1\varepsilon} \ln c_{1\varepsilon} + \tfrac{1}{e}\right) + a_1 \int_\Omega \frac{|\nabla c_{1\varepsilon}|^2}{c_{1\varepsilon}} + \frac{\beta}{2} \int_\Omega c_{1\varepsilon}^2 \ln(2 + c_{1\varepsilon}) \\[5pt]
  & + \frac{\varepsilon}{2} \int_\Omega c_{1\varepsilon}^\theta \ln(2 + c_{1\varepsilon})  \leq b_\tau \int_\Omega  \nabla \tau_\varepsilon \cdot \nabla c_{1\varepsilon} +  M_{\alpha_2}\int_{\{ c_{1\varepsilon} > 1\}} \ln c_{1\varepsilon} + \xi_1  \int_\Omega c_{1\varepsilon} \ln c_{1\varepsilon} + \xi_1 \frac{|\Omega|}{e} + C_1
\end{align}
for any $\xi_1 > 0$ and for all $t \in (0, T)$. To estimate the term $\xi_1  \int_\Omega c_{1\varepsilon} \ln c_{1\varepsilon}$ on the right-hand side of \eqref{new_change1} we use the fact that $\xi_1  \int_\Omega c_{1\varepsilon} \ln c_{1\varepsilon} \leq \xi_1 \int_\Omega c_{1\varepsilon}^{\frac{3}{2}}$ for $c_{1\varepsilon} \geq 0$. As $n \leq 3$ we can use the Gagliardo-Nirenberg inequality in conjunction with \eqref{l1c11} as in \cite[(3.19) with $q = \frac{3}{2}$]{tao2021global} to have
\begin{align}
  \label{new_change2}
  \nonumber \xi_1 \int_\Omega c_{1\varepsilon}^{\frac{3}{2}} & \leq C_2 \xi_1 \|\nabla \sqrt{c_{1\varepsilon}}\|^{\frac{n}{2}}_{L^2(\Omega)} \|\sqrt{c_{1\varepsilon}}\|^{3 - \frac{n}{2}}_{L^2(\Omega)} + C_2 \xi_1 \|\nabla \sqrt{c_{1\varepsilon}}\|^3_{L^2(\Omega)} \\[5pt]
  \leq & \frac{C_3 \xi_1}{4} \int_\Omega \frac{|\nabla c_{1\varepsilon}|^2}{c_{1\varepsilon}} + C_4 \quad \text{for all}~ t \in (0, T),
\end{align}
where $C_3 > 0$ and $C_4 > 0$. Setting $\xi_1 = \frac{2 a_1}{C_3}$ in \eqref{new_change2} we have the following estimate
\begin{align}
  \label{new_change3}
  \xi_1 \int_\Omega c_{1\varepsilon}^{\frac{3}{2}} \leq \frac{a_1}{2} \int_\Omega \frac{|\nabla c_{1\varepsilon}|^2}{c_{1\varepsilon}} + C_4 \quad \text{for all}~ t \in (0, T).
\end{align}
Also using \eqref{l1c1} we can have
\begin{align}
  \label{new_change4}
  M_{\alpha_2}\int_{\{ c_{1\varepsilon} > 1\}} \ln c_{1\varepsilon} \leq M_{\alpha_2} \int_\Omega c_{1\varepsilon} \leq C_5  \quad \text{for all}~ t \in (0, T).
\end{align}
Inserting \eqref{new_change3} and \eqref{new_change4} in \eqref{new_change1} results in
\begin{align}
\label{new_change5}
   & \nonumber \frac{d}{dt} \int_\Omega \left(c_{1\varepsilon} \ln c_{1\varepsilon} + \tfrac{1}{e}\right) + \xi_1  \int_\Omega \left(c_{1\varepsilon} \ln c_{1\varepsilon} + \tfrac{1}{e}\right) + \frac{a_1}{2} \int_\Omega \frac{|\nabla c_{1\varepsilon}|^2}{c_{1\varepsilon}} + \frac{\beta}{2} \int_\Omega c_{1\varepsilon}^2 \ln(2 + c_{1\varepsilon}) \\[5pt]
   & + \frac{\varepsilon}{2} \int_\Omega c_{1\varepsilon}^\theta \ln(2 + c_{1\varepsilon})  \leq b_\tau \int_\Omega  \nabla \tau_\varepsilon \cdot \nabla c_{1\varepsilon}  + C_6 \qquad \text{for all}~ t \in (0, T).
\end{align}
Following the same procedure as above, from \eqref{l1c2} and \eqref{eyc1}, we obtain for any $\xi_2 > 0$ 
\begin{align}
\label{new_change6}
  \nonumber & \dfrac{d}{dt} \int_\Omega \left(c_{2 \varepsilon} \ln c_{2 \varepsilon} + \tfrac{1}{e}\right) + \xi_2 \int_\Omega \left(c_{2 \varepsilon} \ln c_{2 \varepsilon} + \tfrac{1}{e}\right) + \frac{a_2}{2} \int_\Omega \dfrac{|\nabla c_{2 \varepsilon}|^2}{c_{2\varepsilon}}  + \frac{\varepsilon}{2} \int_\Omega c_{2\varepsilon}^\theta \ln(2 + c_{2\varepsilon}) \\[5pt]
  & \leq  \frac{\zeta}{2}\int_\Omega c_{2\varepsilon}^2 + \frac{b_\chi^2}{2\zeta} \int_\Omega |\Delta \chi_\varepsilon|^2 + C_7 \qquad \text{for all}~ t \in (0, T)
\end{align}
and for any $\zeta > 0$. Multiplying \eqref{chi_ent} by $\frac{2 b_\chi^2}{D_\chi \zeta}$, \eqref{tau_ent} by $\frac{a_2}{4}$, and \eqref{new_change5} by $\frac{a_2 \delta}{4 b_\tau}$, and then adding them to \eqref{new_change6}, results in
\begin{align}
    \label{ebnt2}
    \nonumber & \frac{d}{dt}\left\{\frac{a_2 \delta}{4 b_\tau} \int_\Omega \left(c_{1\varepsilon} \ln c_{1\varepsilon} + \tfrac{1}{e}\right) + \int_\Omega \left(c_{2 \varepsilon} \ln c_{2 \varepsilon} + \tfrac{1}{e}\right) + \frac{b_\chi^2}{D_\chi \zeta} \int_\Omega |\nabla \chi_\varepsilon|^2 + \frac{a_2}{8} \int_\Omega \frac{|\nabla \tau_\varepsilon|^2}{\tau_\varepsilon}\right\} \\[5pt]
    \nonumber & + \frac{\xi_1 a_2 \delta}{4 b_\tau} \int_\Omega \left(c_{1\varepsilon} \ln c_{1\varepsilon} + \tfrac{1}{e}\right) + \xi_2 \int_\Omega \left(c_{2 \varepsilon} \ln c_{2 \varepsilon} + \tfrac{1}{e}\right) + \frac{b_\chi^2}{D_\chi \zeta} \int_\Omega |\nabla \chi_\varepsilon|^2 + \frac{\mu a_2}{8} \int_\Omega \frac{|\nabla \tau_\varepsilon|^2}{\tau_\varepsilon}+ \frac{a_1 a_2 \delta}{8 b_\tau} \int_\Omega \frac{|\nabla c_{1\varepsilon}|^2}{c_{1\varepsilon}}\\[5pt]
    \nonumber &  + \frac{a_2}{4}  \int_\Omega \dfrac{|\nabla c_{2 \varepsilon}|^2}{c_{2\varepsilon}} + \frac{b_\chi^2}{2 \zeta} \int_\Omega |\Delta \chi_\varepsilon|^2 + \frac{a_2 \delta}{8} \int_\Omega c_{1\varepsilon} \frac{|\nabla \tau_\varepsilon|^2}{\tau_\varepsilon} + \frac{a_2 \delta \beta}{8 b_\tau} \int_\Omega c_{1\varepsilon}^2 \ln(2 + c_{1\varepsilon}) + \frac{a_2 \delta \varepsilon}{8 b_\tau} \int_\Omega c_{1\varepsilon}^\theta \ln(2 + c_{1\varepsilon})  \\[5pt]
    & + \frac{\varepsilon}{2} \int_\Omega c_{2\varepsilon}^\theta \ln(2 + c_{2\varepsilon}) +\frac{a_2 \varepsilon}{8} \int_\Omega \tau_\varepsilon |D^2 \ln \tau_\varepsilon|^2  \leq \frac{4b_\chi^2 a_\chi^2 \chi_\infty^2}{D_\chi^2 \zeta} \int_\Omega c_{1\varepsilon}^2 + \left(\tfrac{\zeta}{2} +  \tfrac{4b_\chi^2 a_\chi^2 \chi_\infty^2}{D_\chi^2 \zeta}\right) \int_\Omega c_{2\varepsilon}^2
\end{align}
for any $\zeta > 0$ and for all $t \in (0, T)$. We apply the Gagliardo-Nirenberg inequality  \cite[(3.19) with $q = 2$]{tao2021global} for $n \in \{2, 3\}$ together with \eqref{l1c2} to directly obtain
 \begin{align}
 \label{c2_sq1}
    \nonumber & \int_\Omega c_{2\varepsilon}^2 \leq C_{GN}\cdot \left\|\nabla \sqrt{c_{2\varepsilon}}\right\|^n_{L^2(\Omega)}\left\|\sqrt{c_{2\varepsilon}}\right\|^{4 -
    n}_{L^2(\Omega)} + C_{GN} \|\sqrt{c_{2\varepsilon}}\|^4_{L^2(\Omega)}\\[5pt]
    & \int_\Omega c_{2\varepsilon}^2 \leq C_8(T) \int_\Omega \dfrac{|\nabla c_{2\varepsilon}|^2}{c_{2\varepsilon}} + C_9(T)  \quad \text{for all} ~  t \in (0, T).
  \end{align}
From above, we can have
\begin{align}
    \label{c234}
    \left(\tfrac{\zeta}{2} +  \tfrac{4b_\chi^2 a_\chi^2 \chi_\infty^2}{D_\chi^2 \zeta}\right) \int_\Omega c_{2\varepsilon}^2 \leq  \left(\tfrac{\zeta}{2} +  \tfrac{4b_\chi^2 a_\chi^2 \chi_\infty^2}{D_\chi^2 \zeta}\right)C_8(T) \int_\Omega \dfrac{|\nabla c_{2\varepsilon}|^2}{c_{2\varepsilon}} + C_{10}(T)  \quad \text{for all} ~  t \in (0, T).
\end{align}
Choose $\zeta$ such that
\begin{align}
    \label{c235}
    \left(\tfrac{\zeta}{2} +  \tfrac{4b_\chi^2 a_\chi^2 \chi_\infty^2}{D_\chi^2 \zeta}\right)C_8(T) := \frac{a_2}{8}.
\end{align}
We define two functions
\begin{align}
    \label{entro23}
    \mathcal{E}_\varepsilon(t) := \frac{a_2 \delta}{4 b_\tau} \int_\Omega \left(c_{1\varepsilon} \ln c_{1\varepsilon} + \tfrac{1}{e}\right) + \int_\Omega \left(c_{2 \varepsilon} \ln c_{2 \varepsilon} + \tfrac{1}{e}\right) + \frac{b_\chi^2}{D_\chi \zeta} \int_\Omega |\nabla \chi_\varepsilon|^2 + \frac{a_2}{8} \int_\Omega \frac{|\nabla \tau_\varepsilon|^2}{\tau_\varepsilon}
\end{align}
and
\begin{align}
    \label{entro45}
    \nonumber \mathcal{D}_\varepsilon(t) := & \frac{a_1 a_2 \delta}{8 b_\tau} \int_\Omega \frac{|\nabla c_{1\varepsilon}|^2}{c_{1\varepsilon}} + \frac{a_2}{8}  \int_\Omega \dfrac{|\nabla c_{2 \varepsilon}|^2}{c_{2\varepsilon}} + \frac{b_\chi^2}{2 \zeta} \int_\Omega |\Delta \chi_\varepsilon|^2 + \frac{a_2 \delta}{8} \int_\Omega c_{1\varepsilon} \frac{|\nabla \tau_\varepsilon|^2}{\tau_\varepsilon} + \frac{a_2 \delta \beta}{8 b_\tau} \int_\Omega c_{1\varepsilon}^2 \ln(2 + c_{1\varepsilon}) \\[5pt]
    & + \frac{a_2 \delta \varepsilon}{8 b_\tau} \int_\Omega c_{1\varepsilon}^\theta \ln(2 + c_{1\varepsilon}) + \frac{\varepsilon}{2} \int_\Omega c_{2\varepsilon}^\theta \ln(2 + c_{2\varepsilon}) +\frac{a_2 \varepsilon}{8} \int_\Omega \tau_\varepsilon |D^2 \ln \tau_\varepsilon|^2.
\end{align}
In view of \eqref{c235}, we can rewrite \eqref{ebnt2} by using \eqref{entro23} and \eqref{entro45}, with $\varrho \leq \min\{\xi_1, \xi_2, 1, \mu\}$, in the following form:
\begin{align}
\label{entropy_in3}
\mathcal{E}'_\varepsilon(t) + \varrho \mathcal{E}_\varepsilon(t) + \mathcal{D}_\varepsilon(t)
\leq \frac{4b_\chi^2 a_\chi^2 \chi_\infty^2}{D_\chi^2 \zeta} \int_\Omega c_{1\varepsilon}^2 + C_{11},
\quad \text{for all } t \in (0, T).
\end{align}
Clearly, $\mathcal{D}_\varepsilon(t)$ is positive, hence
\begin{align}
\label{ento_in51}
\mathcal{E}'_\varepsilon(t) + \varrho \mathcal{E}_\varepsilon(t)
\leq \frac{4b_\chi^2 a_\chi^2 \chi_\infty^2}{D_\chi^2 \zeta} \int_\Omega c_{1\varepsilon}^2 + C_{11},
\quad \text{for all } t \in (0, T).
\end{align}
In view of \eqref{c1_bound1}, we can find a constant $C_{12}(T) > 0$ such that
\begin{align*}
    \frac{4b_\chi^2 a_\chi^2 \chi_\infty^2}{D_\chi^2 \zeta} \int_0^T \int_\Omega c_{1\varepsilon}^2 \leq C_{12}(T).
\end{align*}
This allows us to apply \cite[Lemma 3.4]{stinner2014global} to \eqref{ento_in51} and obtain a $C_{13}(T) > 0$ such that
\begin{align}
\label{entropy_in5}
\mathcal{E}_\varepsilon(t) \leq C_{13}, \quad \text{for all } t \in (0, T),
\end{align}
which gives \eqref{entropy_main1}. A straightforward integration of \eqref{entropy_in3} over $(0, T)$ yields in view of \eqref{entropy_in5}
\begin{equation*}
\int_0^T \mathcal{D}_\varepsilon(t) \, dt \leq C_{14}(T),
\end{equation*}
which gives \eqref{entropy_main2}.
\end{proof}
\subsection{Further $\varepsilon$-independent estimates}

Based on inequalities \eqref{entropy_main1} and \eqref{entropy_main2} in Lemma \ref{entropy_lemma}, we now establish estimates for the solution components that will lead to strong compactness properties.

\begin{Lemma}
 \label{nab_bound_lem}
   Let $T > 0$. Then there exists a $C(T) > 0$ such that for any $\varepsilon \in (0, 1)$:
   \begin{align}
     \label{chi_tau_nab}  \int_\Omega |\nabla \chi_\varepsilon(\cdot, t)|^2 \leq & C(T),  \quad  \int_\Omega |\nabla \tau_\varepsilon(\cdot, t)|^2 \leq C(T),  \quad \text{for all} ~ t \in (0, T), \\[5pt]
     \label{c2_sq} & \int_0^T \int_\Omega c_{2\varepsilon}^2 \leq C(T).
   \end{align}
 \end{Lemma}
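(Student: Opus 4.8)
The plan is to read off all three bounds directly from the entropy estimates \eqref{entropy_main1} and \eqref{entropy_main2} of \cref{entropy_lemma}, using only the uniform $L^\infty$-bound \eqref{t_bound} on $\tau_\varepsilon$ and the Gagliardo--Nirenberg estimate \eqref{c2_sq1} as auxiliary tools. The observation that makes everything work is that each summand on the left-hand side of \eqref{entropy_main1} is nonnegative: the terms $c_{1\varepsilon}\ln c_{1\varepsilon}+\tfrac{1}{e}$ and $c_{2\varepsilon}\ln c_{2\varepsilon}+\tfrac{1}{e}$ are nonnegative because $s\ln s\ge-\tfrac{1}{e}$ for $s\ge 0$, while the two gradient terms are manifestly nonnegative. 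Since all the prefactors are fixed positive constants, \eqref{entropy_main1} bounds each of its four terms individually by $C(T)$, uniformly in $t\in(0,T)$ and in $\varepsilon\in(0,1)$.

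The first estimate in \eqref{chi_tau_nab} is then immediate: because $\tfrac{b_\chi^2}{D_\chi\zeta}$ is a fixed positive constant, \eqref{entropy_main1} yields $\int_\Omega|\nabla\chi_\varepsilon(\cdot,t)|^2\le\tfrac{D_\chi\zeta}{b_\chi^2}\,C(T)$ for all $t\in(0,T)$. For the $\tau_\varepsilon$-gradient bound I would convert the weighted gradient appearing in \eqref{entropy_main1} into a plain one via the uniform upper bound from \eqref{t_bound}. Since $0<\tau_\varepsilon\le\tau_\ast$, one has
\begin{align*}
\int_\Omega|\nabla\tau_\varepsilon|^2=\int_\Omega\tau_\varepsilon\,\frac{|\nabla\tau_\varepsilon|^2}{\tau_\varepsilon}\le\tau_\ast\int_\Omega\frac{|\nabla\tau_\varepsilon|^2}{\tau_\varepsilon},
\end{align*}
and the right-hand side is controlled by the fourth term of \eqref{entropy_main1}, giving $\int_\Omega|\nabla\tau_\varepsilon(\cdot,t)|^2\le\tau_\ast\tfrac{8}{a_2}C(T)$ uniformly in $t\in(0,T)$, which is the second estimate in \eqref{chi_tau_nab}.

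Finally, for \eqref{c2_sq} I would combine the space--time dissipation bound with the Gagliardo--Nirenberg estimate already established. Inequality \eqref{c2_sq1} reads $\int_\Omega c_{2\varepsilon}^2\le C_8(T)\int_\Omega\tfrac{|\nabla c_{2\varepsilon}|^2}{c_{2\varepsilon}}+C_9(T)$ for each $t\in(0,T)$; integrating this over $(0,T)$ and invoking the $\int_0^T\!\int_\Omega\tfrac{|\nabla c_{2\varepsilon}|^2}{c_{2\varepsilon}}$-term of \eqref{entropy_main2} gives $\int_0^T\!\int_\Omega c_{2\varepsilon}^2\le C_8(T)\,C(T)+C_9(T)\,T\le C(T)$.

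Since all three arguments are short consequences of \cref{entropy_lemma}, there is no genuine obstacle here — the heavy lifting was already done in constructing the entropy functional. The only point requiring mild care is the passage from the weighted gradient $\int_\Omega\tfrac{|\nabla\tau_\varepsilon|^2}{\tau_\varepsilon}$ to $\int_\Omega|\nabla\tau_\varepsilon|^2$, which hinges on the bound $\tau_\ast$ from \eqref{t_bound} being independent of both $t$ and $\varepsilon$; likewise one must make sure that the constants $C_8(T),C_9(T)$ from \eqref{c2_sq1} are $\varepsilon$-independent (they are, since they arise only from the Gagliardo--Nirenberg constant and the $\varepsilon$-uniform $L^1$-bound \eqref{l1c2} on $c_{2\varepsilon}$), so that the resulting $C(T)$ is genuinely uniform in $\varepsilon$.
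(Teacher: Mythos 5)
Your proposal is correct and follows essentially the same route as the paper: the $\chi_\varepsilon$-gradient bound is read off directly from \eqref{entropy_main1}, the $\tau_\varepsilon$-gradient bound uses the identity $\int_\Omega|\nabla\tau_\varepsilon|^2=\int_\Omega\tau_\varepsilon\frac{|\nabla\tau_\varepsilon|^2}{\tau_\varepsilon}\le\tau_\ast\int_\Omega\frac{|\nabla\tau_\varepsilon|^2}{\tau_\varepsilon}$ together with \eqref{t_bound}, and \eqref{c2_sq} follows by integrating \eqref{c2_sq1} over $(0,T)$ against the dissipation term of \eqref{entropy_main2}. Your explicit remark that $s\ln s\ge-\tfrac{1}{e}$ makes each summand of \eqref{entropy_main1} nonnegative (so that the individual terms are bounded) is a point the paper leaves implicit, but it is not a departure from the argument.
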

\begin{proof}
  The first estimate in \eqref{chi_tau_nab} is a direct consequence of \eqref{entropy_main1}. To validate the second estimate in \eqref{chi_tau_nab}, we apply \eqref{t_bound} and \eqref{entropy_main1}, which together ensure the existence of a constant $C(T) > 0$ such that
  \begin{align*}
    \int_\Omega |\nabla \tau_\varepsilon|^2 =  \int_\Omega \frac{|\nabla \tau_\varepsilon|^2}{\tau_\varepsilon} \cdot \tau_\varepsilon \leq \tau_\ast \int_\Omega \frac{|\nabla \tau_\varepsilon|^2}{\tau_\varepsilon} \leq C(T)  \quad \text{for all} ~ t \in (0, T).
  \end{align*}
 To prove \eqref{c2_sq}, we can use \eqref{c2_sq1} to have
 \begin{align}
 \label{c2_sq14}
   \int_\Omega c_{2\varepsilon}^2 \leq C_3 \int_\Omega \dfrac{|\nabla c_{2\varepsilon}|^2}{c_{2\varepsilon}} + C_4  \quad \text{for all} ~  t \in (0, T).
  \end{align}
 An integration of \eqref{c2_sq14} in view of \eqref{entropy_main2} results in
  \begin{align*}
    \int_0^T \int_\Omega c_{2\varepsilon}^2 \leq C_3 \int_0^T \int_\Omega \dfrac{|\nabla c_{2\varepsilon}|^2}{c_{2\varepsilon}} + C_4 T \leq C(T).
  \end{align*}
\end{proof}
\noindent
We will now derive appropriate compactness properties for the solution components of \eqref{sys1}, which will then imply convergence to a global weak solution of the original problem.

\begin{Lemma}
  \label{eind_ht}
  Let $T > 0$ and assume that $k > \frac{n + 2}{2}$ is an integer, then there exists a $C(T) > 0$ such that for all $\varepsilon \in (0, 1)$
\begin{align}
    & \label{chi_com_bound} \|\partial_t \chi_\varepsilon\|_{L^1((0, T); (W_0^{k, 2}(\Omega))^\ast)} \leq C(T),                \\[5pt]
    & \label{tau_com_bound} \|\partial_t \tau_\varepsilon\|_{L^1((0, T); (W_0^{k, 2}(\Omega))^\ast)} \leq C(T).
  \end{align}
\end{Lemma}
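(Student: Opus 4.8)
The plan is to test each of the two evolution equations against an arbitrary $\psi \in W_0^{k,2}(\Omega)$ and to bound every resulting term by a constant multiple of $\|\psi\|_{W_0^{k,2}(\Omega)}$ that is uniform in both $t \in (0,T)$ and $\varepsilon \in (0,1)$; taking the supremum over $\|\psi\|_{W_0^{k,2}(\Omega)} \le 1$ then identifies $\|\partial_t\chi_\varepsilon(\cdot,t)\|_{(W_0^{k,2}(\Omega))^\ast}$ and $\|\partial_t\tau_\varepsilon(\cdot,t)\|_{(W_0^{k,2}(\Omega))^\ast}$, after which integration over $(0,T)$ delivers the claimed $L^1$-in-time bounds. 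The crucial structural observation is that the hypothesis $k > \frac{n+2}{2}$ is precisely what yields the Sobolev embedding $W_0^{k,2}(\Omega) \hookrightarrow W^{1,\infty}(\Omega)$, so that $\|\psi\|_{L^\infty(\Omega)} + \|\nabla\psi\|_{L^\infty(\Omega)} \le C\|\psi\|_{W_0^{k,2}(\Omega)}$. This $W^{1,\infty}$-control is what allows me to pair the diffusion terms, after a single integration by parts, against $\nabla\psi$, while all lower-order terms only require $\|\psi\|_{L^\infty(\Omega)}$.

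For $\chi_\varepsilon$ I would start from the third equation of \eqref{sys1} and write $\int_\Omega \partial_t\chi_\varepsilon\,\psi = -D_\chi\int_\Omega\nabla\chi_\varepsilon\cdot\nabla\psi - a_\chi\int_\Omega(c_{1\varepsilon}+c_{2\varepsilon})\chi_\varepsilon\,\psi + \int_\Omega F_\varepsilon(\chi_\varepsilon)\,\psi$, the boundary contribution dropping since $\psi$ vanishes on $\partial\Omega$. The gradient term is estimated by $D_\chi\|\nabla\chi_\varepsilon\|_{L^2(\Omega)}|\Omega|^{1/2}\|\nabla\psi\|_{L^\infty(\Omega)}$, which is uniformly controlled by the first estimate in \eqref{chi_tau_nab}. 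The reaction term is bounded using $\chi_\varepsilon \le \chi_\infty$ from \eqref{chi_bound2} together with the $L^1$-bounds \eqref{l1c1} and \eqref{l1c2} for $c_{1\varepsilon}$ and $c_{2\varepsilon}$, while the source term is controlled by \eqref{mol_bound_short}. Each of these bounds is uniform in $t$ and $\varepsilon$, which establishes \eqref{chi_com_bound} (indeed with an $L^\infty$-in-time, and a fortiori $L^1$, bound).

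The argument for $\tau_\varepsilon$ is entirely analogous, now using the fourth equation of \eqref{sys1}. After integrating the $\varepsilon\Delta\tau_\varepsilon$ term by parts, the one point requiring attention is keeping the estimate uniform in $\varepsilon$: since $\varepsilon<1$ and $\|\nabla\tau_\varepsilon\|_{L^2(\Omega)}$ is bounded by the second estimate in \eqref{chi_tau_nab}, the quantity $\varepsilon\|\nabla\tau_\varepsilon\|_{L^2(\Omega)}$ remains uniformly controlled. The remaining terms are handled by the $L^\infty$-bound \eqref{t_bound} on $\tau_\varepsilon$ (for $\delta c_{1\varepsilon}\tau_\varepsilon$, combined with \eqref{l1c1}, and for $\mu\tau_\varepsilon$) and by the elementary inequality $\tfrac{c_{2\varepsilon}}{1+c_{2\varepsilon}}<1$ for the source term, each giving a bound uniform in $t$ and $\varepsilon$; this yields \eqref{tau_com_bound}. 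I do not anticipate a genuine obstacle here: the proof is a routine duality estimate, and the only things to verify carefully are that the embedding threshold $k>\frac{n+2}{2}$ is invoked for the gradient pairing and that the $\varepsilon$-weighted diffusion term in the $\tau_\varepsilon$ equation is kept uniform in $\varepsilon$ via the bound on $\|\nabla\tau_\varepsilon\|_{L^2(\Omega)}$.
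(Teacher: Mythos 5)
Your proposal is correct and follows essentially the same route as the paper: a duality estimate obtained by testing each equation against $\psi$, invoking the embedding $W_0^{k,2}(\Omega)\hookrightarrow W^{1,\infty}(\Omega)$ guaranteed by $k>\frac{n+2}{2}$, and controlling all terms via the previously established uniform bounds \eqref{l1c1}, \eqref{l1c2}, \eqref{chi_bound2}, \eqref{t_bound} and the gradient estimates. The only cosmetic difference is that you bound the $\varepsilon$-diffusion term in the $\tau_\varepsilon$ equation via \eqref{chi_tau_nab}, whereas the paper writes $\varepsilon\bigl(\int_\Omega \tfrac{|\nabla\tau_\varepsilon|^2}{\tau_\varepsilon}\bigr)^{1/2}\bigl(\int_\Omega \tau_\varepsilon\bigr)^{1/2}$ using \eqref{entropy_main1} and \eqref{t_bound} directly; both are valid.
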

\begin{proof}
 To derive \eqref{chi_com_bound}, by using  \eqref{l1c1}, \eqref{l1c2}, \eqref{chi_bound2} and \eqref{entropy_main1} we can find a $C_1(T) > 0$ such that
\begin{align}
  \label{chihpre2}
    \nonumber & \left|\int_\Omega \partial_t \chi_\varepsilon \cdot \psi\right|  = \left| D_\chi \int_\Omega \nabla \chi_\varepsilon \cdot \nabla \psi - a_\chi \int_\Omega c_{1\varepsilon} \chi_\varepsilon \psi - a_\chi \int_\Omega c_{2\varepsilon} \chi_\varepsilon \psi + \int_\Omega F_\varepsilon(\chi_\varepsilon) \psi \right|  \\[7pt]
    \nonumber & \leq \frac{D_\chi}{2} \left(\int_\Omega |\nabla \chi_\varepsilon|^2 + |\Omega|\right)  \|\nabla \psi\|_{L^\infty(\Omega)} + a_\chi \left(\int_\Omega  c_{1\varepsilon}\right) \chi_\infty  \|\psi\|_{L^\infty(\Omega)} + \left(\int_\Omega  c_{2\varepsilon} \right) \chi_\infty\|\psi\|_{L^\infty(\Omega)} + M_\chi |\Omega|   \|\psi\|_{L^\infty(\Omega)}  \\[5pt]
    & \leq C_1\|\psi\|_{W^{1, \infty}(\Omega)} ~~\text{for all}~ t \in (0, T) ~ \text{and each}~ \psi \in C^\infty_{0}(\Omega).
  \end{align}
 The Sobolev embedding theorem allows us to have
  \begin{equation}
   \label{hpre3}
    \|\psi\|_{W^{1, \infty}(\Omega)} \leq C_2\|\psi\|_{W^{k, 2}_0(\Omega)} ~~\text{for all}~ \psi \in C^\infty_{0}(\Omega)
  \end{equation}
  with $C_2 >0$. From \eqref{chihpre2} and \eqref{hpre3} we can directly get
  \begin{equation*}
   \|\partial_t \chi_\varepsilon\|_{L^1(0, T; (W_0^{k, 2}(\Omega))^\ast)} = \int_0^T \sup_{\substack{\psi \in C_0^\infty(\Omega),\\ \|\psi\|_{W^{k, 2}_0(\Omega)} = 1}} \left|\int_\Omega \partial_t \chi_\varepsilon \cdot \psi\right| dt \leq C_1 C_2 T,
  \end{equation*}
 thus establishing \eqref{chi_com_bound}. To derive \eqref{tau_com_bound}, by using  \eqref{l1c1}, \eqref{t_bound}, \eqref{entropy_main1} and the fact that $0 \leq \tfrac{c_{2\varepsilon}}{1 + c_{2\varepsilon}} < 1$ for $c_{2\varepsilon} \geq 0$, we can find a $C_3(T) > 0$ such that
   \begin{align*}
    \nonumber \left|\int_\Omega \partial_t \tau_\varepsilon \cdot \psi\right| & = \left| \varepsilon \int_\Omega \nabla \tau_\varepsilon \cdot \nabla \psi - \delta \int_\Omega \tau_\varepsilon c_{1\varepsilon}\psi  - \mu \int_\Omega \tau_\varepsilon \psi +  \int_\Omega \dfrac{c_{2\varepsilon}}{1 + c_{2\varepsilon}} \psi\right|  \\[5pt]
    & \nonumber \leq \varepsilon \left(\int_\Omega \dfrac{|\nabla \tau_\varepsilon|^2}{\tau_\varepsilon}\right)^{\frac{1}{2}} \cdot \left(\int_\Omega \tau_\varepsilon\right)^{\frac{1}{2}} \|\nabla \psi\|_{L^\infty(\Omega)}
    + \delta \left(\int_\Omega \tau_\varepsilon c_{1\varepsilon}\right)\|\psi\|_{L^\infty(\Omega)}\\[7pt]
    & \nonumber  + \mu \left(\int_\Omega \tau_\varepsilon \right)\|\psi\|_{L^\infty(\Omega)} + \left(\int_\Omega\dfrac{c_{2\varepsilon}}{1 + c_{2\varepsilon}}\right) \|\psi\|_{L^\infty(\Omega)} \\[7pt]
    & \nonumber \leq \varepsilon \dfrac{\sqrt{\tau_\ast |\Omega|}}{2} \left(\int_\Omega \dfrac{|\nabla \tau_\varepsilon|^2}{\tau_\varepsilon} + 1\right)  \|\nabla \psi\|_{L^\infty(\Omega)} + \delta \tau_\ast \left(\int_\Omega c_{1\varepsilon}\right)\|\psi\|_{L^\infty(\Omega)} \\[7pt]
    & \nonumber + \mu \tau_\ast |\Omega| \|\psi\|_{L^\infty(\Omega)} + |\Omega| \|\psi\|_{L^\infty(\Omega)} \\[7pt]
    & \leq C_3\|\psi\|_{W^{1, \infty}(\Omega)} ~~\text{for all}~ t \in (0, T) ~ \text{and each}~ \psi \in C^\infty_{0}(\Omega).
  \end{align*}
  We can now apply a similar reasoning as in the derivation of \eqref{chi_com_bound} to obtain \eqref{tau_com_bound}.
\end{proof}
\begin{Lemma}
  \label{c1_tim_der}
  Let $T > 0$ and $l > \frac{n + 4}{2}$ be an integer. Then there exists a $C(T) > 0$ such that for all $\varepsilon \in (0, 1)$
  \begin{align}
    \label{c1_tim1} & \|c_{1\varepsilon}\|_{L^{\frac{4}{3}}(0, T; W^{1, \frac{4}{3}}(\Omega))} \leq C(T),   \\[5pt]
    \label{c1_tim2} & \|\partial_t c_{1\varepsilon}\|_{L^{1}(0, T; (W^{l, 2}_0(\Omega))^\ast)} \leq C(T), \quad \text{and}, \\[5pt]
    \label{c2_tim1} & \|c_{2\varepsilon}\|_{L^{\frac{5}{4}}(0, T; W^{1, \frac{5}{4}}(\Omega))} \leq C(T),   \\[5pt]
    \label{c2_tim2} & \|\partial_t c_{2\varepsilon}\|_{L^{1}(0, T; (W^{l, 2}_0(\Omega))^\ast)} \leq C(T).
  \end{align}
\end{Lemma}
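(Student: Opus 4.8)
The plan is to establish the four bounds in two stages. First I would prove the spatial regularity estimates \eqref{c1_tim1} and \eqref{c2_tim1}, which come from interpolating the entropy dissipation rates in \eqref{entropy_main2} against the space-time $L^2$ control of the cell densities; then I would derive the time-derivative bounds \eqref{c1_tim2} and \eqref{c2_tim2} by a duality argument, testing the first two equations of \eqref{sys1} against smooth compactly supported functions and bounding each resulting term in $L^1(0,T)$.

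For \eqref{c1_tim1} I would split $\nabla c_{1\varepsilon}=\frac{\nabla c_{1\varepsilon}}{\sqrt{c_{1\varepsilon}}}\sqrt{c_{1\varepsilon}}$ and apply H\"older's inequality in space-time with exponents $\tfrac32$ and $3$:
\[
\int_0^T\!\!\int_\Omega |\nabla c_{1\varepsilon}|^{\frac43}
= \int_0^T\!\!\int_\Omega \Big(\tfrac{|\nabla c_{1\varepsilon}|^2}{c_{1\varepsilon}}\Big)^{\frac23} c_{1\varepsilon}^{\frac23}
\le \Big(\int_0^T\!\!\int_\Omega \tfrac{|\nabla c_{1\varepsilon}|^2}{c_{1\varepsilon}}\Big)^{\frac23}\Big(\int_0^T\!\!\int_\Omega c_{1\varepsilon}^2\Big)^{\frac13}.
\]
The first factor is controlled by \eqref{entropy_main2}, and the second by summing \eqref{c1_bound1} over the unit time intervals covering $(0,T)$; together with $\int_0^T\int_\Omega c_{1\varepsilon}^{4/3}\le C(T)$, which follows from the same $L^2$ control and finiteness of $|\Omega|\,T$, this yields \eqref{c1_tim1}. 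For \eqref{c2_tim1} I would repeat the split with H\"older exponents $\tfrac85$ and $\tfrac83$, reducing $\int_0^T\int_\Omega|\nabla c_{2\varepsilon}|^{5/4}$ to the product of $\big(\int_0^T\int_\Omega \frac{|\nabla c_{2\varepsilon}|^2}{c_{2\varepsilon}}\big)^{5/8}$, again bounded via \eqref{entropy_main2}, and $\big(\int_0^T\int_\Omega c_{2\varepsilon}^{5/3}\big)^{3/8}$, which is finite by \eqref{c2_sq} and the finite measure.

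For the time-derivative bounds I would test the first equation of \eqref{sys1} against $\psi\in C_0^\infty(\Omega)$, integrating the diffusion term by parts so that it reads $a_1\int_\Omega c_{1\varepsilon}\Delta\psi$; this is where only the $L^1$ mass bound \eqref{l1c1} enters, at the cost of requiring $\Delta\psi\in L^\infty$, which is precisely why $l>\tfrac{n+4}{2}$ is imposed, so that $W_0^{l,2}(\Omega)\hookrightarrow W^{2,\infty}(\Omega)$. The haptotaxis term $b_\tau\int_\Omega c_{1\varepsilon}\nabla\tau_\varepsilon\cdot\nabla\psi$ I would estimate by $\|\nabla\psi\|_{L^\infty}\|c_{1\varepsilon}(\cdot,t)\|_{L^2}\|\nabla\tau_\varepsilon(\cdot,t)\|_{L^2}$, the last factor being uniformly bounded by \eqref{chi_tau_nab} and the middle one lying in $L^2(0,T)$; the reaction terms are bounded in $L^\infty(0,T)$ using \eqref{alpha_def}, the saturation $\frac{c_{i\varepsilon}}{1+c_{i\varepsilon}}<1$, \eqref{l1c1}, and $\tau_\varepsilon\le\tau_\ast$ from \eqref{t_bound}, while the quadratic contributions $\int_\Omega c_{1\varepsilon}^2$ and $\int_\Omega c_{1\varepsilon}c_{2\varepsilon}$ are placed in $L^1(0,T)$ by \eqref{c1_bound1}, \eqref{c2_sq} and Cauchy--Schwarz; finally the penalization term $\varepsilon\int_\Omega c_{1\varepsilon}^\theta\psi$ is controlled by \eqref{c1_bound2}. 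Summing, each contribution is bounded by $C(t)\|\psi\|_{W^{2,\infty}(\Omega)}$ with $C\in L^1(0,T)$, so taking the supremum over $\|\psi\|_{W_0^{l,2}}=1$ and integrating in $t$ gives \eqref{c1_tim2}. The estimate \eqref{c2_tim2} follows identically, now using $b_\chi\int_\Omega c_{2\varepsilon}\nabla\chi_\varepsilon\cdot\nabla\psi$ with the gradient bound for $\nabla\chi_\varepsilon$ in \eqref{chi_tau_nab}, the mass bound \eqref{l1c2}, and \eqref{c2_bound2}.

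The duality step for the time derivatives is routine once the spatial bounds are available; the one genuinely load-bearing idea is the H\"older interpolation of the degenerate entropy dissipation $\int\!\!\int |\nabla c_{i\varepsilon}|^2/c_{i\varepsilon}$ against the $L^2$ mass, which is what converts the entropy information into honest $W^{1,p}$ regularity. The point requiring care is the time-integrability bookkeeping of the drift contributions, since these are the only terms not bounded by constants: one must check that the product of the space-time $L^2$ norm of $c_{i\varepsilon}$ with the uniform-in-time $L^2$ gradient bound on $\tau_\varepsilon$ (resp.\ $\chi_\varepsilon$) indeed lands in $L^1(0,T)$, which it does by Cauchy--Schwarz in time.
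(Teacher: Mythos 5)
Your proposal is correct and follows essentially the same route as the paper: the spatial bounds \eqref{c1_tim1} and \eqref{c2_tim1} come from exactly the H\"older interpolation of $\int_0^T\!\int_\Omega |\nabla c_{i\varepsilon}|^2/c_{i\varepsilon}$ (from \eqref{entropy_main2}) against the space--time $L^2$, respectively $L^{5/3}$, control of the densities, and the time-derivative bounds come from the same duality testing with $W_0^{l,2}(\Omega)\hookrightarrow W^{2,\infty}(\Omega)$. The only differences are cosmetic --- you integrate the diffusion term by parts a second time onto $\Delta\psi$ and use Cauchy--Schwarz where the paper uses Young's inequality, and you obtain the $L^{5/3}$ bound on $c_{2\varepsilon}$ directly from \eqref{c2_sq} rather than re-running Gagliardo--Nirenberg --- all of which are equally valid.
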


\begin{proof}
  From \eqref{c1_bound1} and \eqref{entropy_main2} we infer that there exists a $C_1(T) > 0$ with
  \begin{align*}
    \int_0^{T} \int_\Omega c_{1\varepsilon}^2 \leq C_1(T) \quad ~\text{and}~ \quad \int_0^T \int_\Omega \dfrac{|\nabla c_{1 \varepsilon}|^2}{c_{1\varepsilon}} \leq C_1(T).
  \end{align*}
  This directly yields
  \begin{equation}
    \label{c1_tres1}
    \int_0^T \int_\Omega |\nabla c_{1\varepsilon}|^{\frac{4}{3}} \leq \left(\int_0^T \int_\Omega \dfrac{|\nabla c_{1 \varepsilon}|^2}{c_{1\varepsilon}}\right)^{\frac{2}{3}} \cdot  \left( \int_0^{T} \int_\Omega c_{1\varepsilon}^2 \right)^{\frac{1}{3}} \leq C_2(T).
  \end{equation}
  The H\"{o}lder inequality then ensures the boundedness of $\|c_{1\varepsilon}\|_{L^{\frac{4}{3}}(\Omega \times (0, T))}$, thus establishing \eqref{c1_tim1}.\\[-2ex]
  
  \noindent
  To prove \eqref{c1_tim2}, using \eqref{alpha_def}, \eqref{t_bound}, the fact that $\frac{c_{i\varepsilon}}{1 + c_{i\varepsilon}} < 1$ for $c_{i\varepsilon} \geq 0,\ i \in \{1, 2\}$ and Young's inequality we can have for all $\psi \in C_0^\infty(\Omega)$
  \begin{align}
  \label{c1_tim5}
    \nonumber \left|\int_\Omega \partial_t c_{1\varepsilon} \cdot \psi\right| & = \bigg| - a_1 \int_\Omega \nabla c_{1\varepsilon} \cdot \nabla \psi + b_\tau \int_\Omega c_{1\varepsilon} \nabla \tau_\varepsilon \cdot \nabla \psi  - \int_\Omega \alpha_1(\chi_\varepsilon)\tfrac{c_{1\varepsilon}}{1 + c_{1\varepsilon}}  \psi + \int_\Omega \alpha_2( \chi_\varepsilon) \tfrac{c_{2\varepsilon}}{1 + c_{2\varepsilon}} \psi \\[5pt]
    \nonumber &   + \beta \int_\Omega  c_{1\varepsilon}\psi -  \beta \int_\Omega  c_{1\varepsilon}^2\psi - \beta \int_\Omega  c_{1\varepsilon}   c_{2\varepsilon}\psi  - \beta \int_\Omega  c_{1\varepsilon} \tau_\varepsilon\psi  - \varepsilon \int_\Omega  c_{1\varepsilon}^\theta \psi \bigg|\\[5pt]
    \nonumber & \leq a_1 \left(\frac{3}{4} \int_\Omega |\nabla c_{1\varepsilon}|^{\frac{4}{3}} + \frac{1}{4} |\Omega|\right)\|\nabla \psi\|_{L^\infty(\Omega)} + \frac{b_\tau}{2} \left(\int_\Omega c_{1\varepsilon}^2 + \int_\Omega |\nabla \tau_\varepsilon|^2\right)\|\nabla \psi\|_{L^\infty(\Omega)} \\[5pt]
    \nonumber & + M_{\alpha_1} |\Omega| \psi\|_{L^\infty(\Omega)} +  M_{\alpha_2} |\Omega| \| \psi\|_{L^\infty(\Omega)}  + \beta \left(\int_\Omega c_{1\varepsilon}\right)\| \psi\|_{L^\infty(\Omega)}+ \beta \left(\int_\Omega c_{1\varepsilon}^2\right)\| \psi\|_{L^\infty(\Omega)} \\[5pt]
    \nonumber &   + \frac{\beta}{2} \left(\int_\Omega c_{1\varepsilon}^2 + \int_\Omega c_{2\varepsilon}^2\right)\| \psi\|_{L^\infty(\Omega)} + \beta \tau_\ast \left(\int_\Omega c_{1\varepsilon}\right)\| \psi\|_{L^\infty(\Omega)} + \varepsilon \left(\int_\Omega c_{1\varepsilon}^\theta\right)\| \psi\|_{L^\infty(\Omega)} \\[5pt]
    & \leq Z_\varepsilon(t) \|\psi\|_{W^{2, \infty}(\Omega)} \quad \text{for all} ~ t \in (0, T),
  \end{align}
  where
  \begin{align*}
    Z_\varepsilon(t) := & \frac{3}{4} a_1 \int_\Omega |\nabla c_{1\varepsilon}|^{\frac{4}{3}} + \frac{b_\tau}{2} \int_\Omega |\nabla \tau_\varepsilon|^{2} + \left(\frac{b_\tau}{2} + \frac{3\beta}{2}\right) \int_\Omega c_{1\varepsilon}^2 + \frac{\beta}{2} \int_\Omega c_{2\varepsilon}^2  + \beta(1 + \tau_\ast) \int_\Omega c_{1\varepsilon} \\[5pt]
    & + \varepsilon \int_\Omega c_{1\varepsilon}^\theta + (M_{\alpha_1} + M_{\alpha_2} + \tfrac{1}{4}) |\Omega|
  \end{align*}
for all $\varepsilon \in (0, 1)$ and each $t \in (0, T)$. Using \eqref{l1c1}, \eqref{c1_bound1}, \eqref{c1_bound2}, \eqref{chi_tau_nab}, \eqref{c2_sq} and \eqref{c1_tres1}, we can find a $C_3(T) > 0$ such that
\begin{equation}
 \label{c1_tim6}
  \int_0^T Z_\varepsilon(t) dt \leq C_3(T).
\end{equation}
\noindent
From \eqref{c1_tim5}, \eqref{c1_tim6} and the Sobolev embedding $W_0^{l, 2}(\Omega) \hookrightarrow W^{2, \infty}(\Omega)$ we can have
\begin{align*}
   \|\partial_t c_{1\varepsilon}\|_{L^1(0, T; (W_0^{l, 2}(\Omega))^\ast)} & = \int_0^T \sup_{\substack{\psi \in C_0^\infty(\Omega),\\ \|\psi\|_{W^{l, 2}_0(\Omega)} = 1}} \left|\int_\Omega \partial_t c_{1\varepsilon} \cdot \psi \right| dt  \leq \int_0^T \sup_{\substack{\psi \in C_0^\infty(\Omega),\\ \|\psi\|_{W^{l, 2}_0(\Omega)} = 1}} Z_\varepsilon(t) \|\psi\|_{W^{2, \infty}(\Omega)} dt  \\[5pt]
   & \leq C_3(T) C_4
  \end{align*}
which entails \eqref{c1_tim2}.\\[-2ex]

\noindent
As $n \leq 3$, by the Gagliardo-Nirenberg inequality and \eqref{l1c2} we can have
\begin{align}
\label{c2_tim6}
  \nonumber\int_\Omega c_{2\varepsilon}^{\frac{5}{3}} & \leq C_5 \|\nabla \sqrt{ c_{2\varepsilon}}\|_{L^2(\Omega)}^{\frac{2n}{3}} \|\sqrt{c_{2\varepsilon}}\|^{\frac{10 - 2n}{3}}_{L^2(\Omega)} + C_6 \|\sqrt{c_{2\varepsilon}}\|^{\frac{10}{3}}_{L^2(\Omega)}                   \\[5pt]
  \nonumber  & \leq C_7 \left(\int_\Omega |\nabla \sqrt{ c_{2\varepsilon}}|^2 \right)^{\frac{n}{3}} + C_7   \\[5pt]
  & \leq C_8 \int_\Omega |\nabla \sqrt{ c_{2\varepsilon}}|^2 + C_8 \leq C_9 \int_\Omega \frac{|\nabla c_{2\varepsilon}|^2}{c_{2\varepsilon}} + C_9 \quad \text{for all} ~ t \in (0, T).
\end{align}
Integrating \eqref{c2_tim6} over $(0,T)$, in view of \eqref{entropy_main2} we can have
\begin{align}
  \label{c2_tim8}
  \int_0^T \int_\Omega c_{2\varepsilon}^{\frac{5}{3}}  \leq C_9 \int_0^T \int_\Omega \frac{|\nabla c_{2\varepsilon}|^2}{c_{2\varepsilon}} + C_9(T) \leq C_{10}(T).
\end{align}
\eqref{c2_tim8} taken together with \eqref{entropy_main2} allow us to get
\begin{equation}
\label{c2_tim9}
  \int_0^T \int_\Omega |\nabla c_{2\varepsilon}|^{\frac{5}{4}} \leq \left(\int_0^T \int_\Omega \frac{|\nabla c_{2\varepsilon}|^2}{c_{2\varepsilon}}\right)^{\frac{5}{8}} \cdot \left(\int_0^T \int_\Omega c_{2\varepsilon}^{\frac{5}{3}}\right)^{\frac{3}{8}} \leq C_{11}(T).
\end{equation}
By H\"{o}lder's inequality we can then deduce \eqref{c2_tim1} from \eqref{c2_tim8} and \eqref{c2_tim9}.\\[-2ex]

\noindent
Now for proving the estimate \eqref{c2_tim2}, use \eqref{alpha_def}, the fact that $\frac{c_{i\varepsilon}}{1 + c_{i\varepsilon}} < 1$ for $c_{i\varepsilon} \geq 0,\ i \in \{1, 2\}$ and Young's inequality, to have for all $\psi \in C_0^\infty(\Omega)$
  \begin{align}
  \label{c2_tim5}
    \nonumber \left|\int_\Omega \partial_t c_{2\varepsilon} \cdot \psi\right| & = \bigg| - a_2 \int_\Omega \nabla c_{2\varepsilon} \cdot \nabla \psi + b_\chi \int_\Omega c_{2\varepsilon} \nabla \chi_\varepsilon \cdot \nabla \psi  + \int_\Omega \alpha_1(\chi_\varepsilon)\tfrac{c_{1\varepsilon}}{1 + c_{1\varepsilon}}  \psi - \int_\Omega \alpha_2(\chi_\varepsilon)\tfrac{c_{1\varepsilon}}{1 + c_{1\varepsilon}} \psi  \bigg| \\[5pt]
    \nonumber & \leq a_2 \left(\frac{4}{5} \int_\Omega |\nabla c_{2\varepsilon}|^{\frac{5}{4}} + \frac{1}{5} |\Omega|\right)\|\nabla \psi\|_{L^\infty(\Omega)} + \frac{b_\chi}{2} \left(\int_\Omega c_{2\varepsilon}^2 + \int_\Omega |\nabla \chi_\varepsilon|^2\right)\|\nabla \psi\|_{L^\infty(\Omega)} \\[5pt]
    \nonumber & + M_{\alpha_1} |\Omega| \| \psi\|_{L^\infty(\Omega)} +  M_{\alpha_2}  |\Omega| \| \psi\|_{L^\infty(\Omega)}\\[5pt]
    & \leq V_\varepsilon(t) \|\psi\|_{W^{2, \infty}(\Omega)} \quad \text{for all} ~ t \in (0, T),
  \end{align}
   where
  \begin{align*}
    V_\varepsilon(t) := & \frac{4}{5} a_2 \int_\Omega |\nabla c_{2\varepsilon}|^{\frac{5}{4}} + \frac{b_\chi}{2} \int_\Omega |\nabla \chi_\varepsilon|^2 + \frac{b_\chi}{2} \int_\Omega c_{2\varepsilon}^2 + (M_{\alpha_1} + M_{\alpha_2} + \tfrac{1}{5})|\Omega|
  \end{align*}
for all $\varepsilon \in (0, 1)$ and each $t \in (0, T)$. Using \eqref{chi_tau_nab}, \eqref{c2_sq} and \eqref{c2_tim9}, we can find a $C_{12}(T) > 0$ such that
\begin{equation}
 \label{c2_tim611}
  \int_0^T V_\varepsilon(t) dt \leq C_{12}(T).
\end{equation}
From \eqref{c2_tim5}, \eqref{c2_tim611}, and the Sobolev embedding $W_0^{l, 2}(\Omega) \hookrightarrow W^{2, \infty}(\Omega)$ we can have
\begin{align*}
   \|\partial_t c_{2\varepsilon}\|_{L^1(0, T; (W_0^{l, 2}(\Omega))^\ast)} & = \int_0^T \sup_{\substack{\psi \in C_0^\infty(\Omega),\\ \|\psi\|_{W^{l, 2}_0(\Omega)} = 1}} \left|\int_\Omega \partial_t c_{2\varepsilon} \cdot \psi \right| dt  \leq \int_0^T \sup_{\substack{\psi \in C_0^\infty(\Omega),\\ \|\psi\|_{W^{l, 2}_0(\Omega)} = 1}} V_\varepsilon(t) \|\psi\|_{W^{2, \infty}(\Omega)} dt  \\[5pt]
   & \leq C_{12}(T) C_{13},
  \end{align*}
which entails \eqref{c2_tim2}.
\end{proof}

\noindent
Using Lemma \ref{eind_ht} and Lemma \ref{c1_tim_der} in conjunction with the  Aubin-Lions lemma \cite[Chapter 3]{temam2001navier}, we can establish the following strong precompactness properties.

\begin{Lemma}
\label{precompactness_all}
Let $T > 0$. Then
\begin{align}
  \label{precompact_c1}    & \{c_{1\varepsilon}\}_{\varepsilon \in (0,1)} ~\text{is strongly precompact in}~ L^{\frac{4}{3}}(\Omega \times (0, T)),  \\[5pt]
  \label{precompact_c2}    & \{c_{2\varepsilon}\}_{\varepsilon \in (0,1)} ~\text{is strongly precompact in}~ L^{\frac{5}{4}}(\Omega \times (0, T)),  \\[5pt]
  \label{precompact_chi}     & \{\chi_{\varepsilon}\}_{\varepsilon \in (0,1)}   ~\text{is strongly precompact in}~ L^{2}(\Omega \times (0, T))   \quad ~\text{and}       \\[5pt]
  \label{precompact_tau}   & \{\tau_{\varepsilon}\}_{\varepsilon \in (0,1)}   ~\text{is strongly precompact in}~ L^{2}(\Omega \times (0, T)).
\end{align}
\end{Lemma}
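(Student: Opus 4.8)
The plan is to apply the Aubin--Lions--Simon compactness lemma separately to each of the four families, using for every component a Gelfand-type triple $X_0 \hookrightarrow\hookrightarrow X \hookrightarrow X_1$ in which $X_0$ embeds compactly into the target space $X$ and $X$ embeds continuously into the dual space $X_1$ that carries the time-derivative bound. The spatial estimates of Lemmas \ref{nab_bound_lem} and \ref{c1_tim_der} supply boundedness of each family in $L^p(0,T;X_0)$, while Lemmas \ref{eind_ht} and \ref{c1_tim_der} supply boundedness of the corresponding time derivatives in $L^1(0,T;X_1)$; since all target exponents $\tfrac{4}{3},\tfrac{5}{4},2$ are finite, the mere $L^1$-in-time control of $\partial_t$ suffices for Simon's version of the lemma.

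Concretely, for $c_{1\varepsilon}$ I would take $X_0 = W^{1,4/3}(\Omega)$, $X = L^{4/3}(\Omega)$ and $X_1 = (W^{l,2}_0(\Omega))^\ast$: then \eqref{c1_tim1} gives boundedness in $L^{4/3}(0,T;X_0)$ and \eqref{c1_tim2} gives $\partial_t c_{1\varepsilon}$ bounded in $L^1(0,T;X_1)$, yielding \eqref{precompact_c1}. For $c_{2\varepsilon}$ the analogous choice $X_0 = W^{1,5/4}(\Omega)$, $X = L^{5/4}(\Omega)$, $X_1 = (W^{l,2}_0(\Omega))^\ast$ together with \eqref{c2_tim1} and \eqref{c2_tim2} gives \eqref{precompact_c2}. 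For $\chi_\varepsilon$ and $\tau_\varepsilon$ I would use $X_0 = W^{1,2}(\Omega)$, $X = L^2(\Omega)$, $X_1 = (W^{k,2}_0(\Omega))^\ast$; here the gradient bounds in \eqref{chi_tau_nab} (combined with \eqref{chi_bound2} for $\chi_\varepsilon$ and \eqref{t_bound} for $\tau_\varepsilon$) yield boundedness even in $L^\infty(0,T;W^{1,2}(\Omega))$, hence in $L^2(0,T;X_0)$, while \eqref{chi_com_bound} and \eqref{tau_com_bound} control the time derivatives in $L^1(0,T;X_1)$, producing \eqref{precompact_chi} and \eqref{precompact_tau}.

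The routine but essential checks are the embeddings underlying each triple. The compact embedding $X_0 \hookrightarrow\hookrightarrow X$ is Rellich--Kondrachov: on a bounded smooth domain $W^{1,p}(\Omega) \hookrightarrow\hookrightarrow L^p(\Omega)$ holds for every $p\in[1,\infty)$, the embedding into $L^p$ being compact in all cases covered by the theorem. The continuous embedding $X \hookrightarrow X_1$ follows by duality from $W^{m,2}_0(\Omega) \hookrightarrow L^{p'}(\Omega)$ (with $p'$ conjugate to the relevant target exponent and $m\in\{k,l\}$), which is guaranteed by the choices $k > \tfrac{n+2}{2}$ and $l > \tfrac{n+4}{2}$ forcing $W^{m,2}_0(\Omega) \hookrightarrow L^\infty(\Omega) \hookrightarrow L^{p'}(\Omega)$; taking adjoints gives $L^p(\Omega) \hookrightarrow (W^{m,2}_0(\Omega))^\ast$.

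The only point deserving attention is the use of an $L^1(0,T;X_1)$ bound on the time derivative rather than an $L^q$ bound with $q>1$: the classical Aubin--Lions statement assumes $q>1$, so I would invoke Simon's refinement, deriving from $\|\partial_t u_\varepsilon\|_{L^1(0,T;X_1)} \le C$ the uniform time-translation estimate $\|u_\varepsilon(\cdot+h) - u_\varepsilon\|_{L^1(0,T-h;X_1)} \le C h$ (via $u_\varepsilon(t+h)-u_\varepsilon(t)=\int_t^{t+h}\partial_t u_\varepsilon$ and Fubini) and combining it with the $L^p(0,T;X_0)$ bound through Ehrling's inequality $\|u\|_X \le \eta\|u\|_{X_0} + C_\eta\|u\|_{X_1}$. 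This is the main (and essentially only) obstacle; once it is handled, the four conclusions follow by identical arguments.
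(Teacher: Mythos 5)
Your proposal is correct and follows essentially the same route as the paper: an Aubin--Lions type compactness argument with the triples $W^{1,4/3}(\Omega)\hookrightarrow\hookrightarrow L^{4/3}(\Omega)\hookrightarrow (W^{l,2}_0(\Omega))^\ast$ (and the analogous ones for the other components), fed by the spatial bounds of Lemmas \ref{nab_bound_lem} and \ref{c1_tim_der} and the $L^1$-in-time dual bounds on the time derivatives from Lemmas \ref{eind_ht} and \ref{c1_tim_der}. The paper delegates the $L^1$-in-time issue to the cited remark in Temam, whereas you spell out the Simon-type translation estimate and the Ehrling inequality explicitly; this is only a difference in the level of detail, not in the argument.
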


\begin{proof}
  Let $l$ be the integer chosen in \eqref{c1_tim2}. Note that $(W_0^{l, 2}(\Omega))^\ast$ is a Hilbert space, and the embedding $W^{1, \frac{4}{3}}(\Omega) \hookrightarrow L^{\frac{4}{3}}(\Omega)$ is compact. Hence, by the Aubin-Lions lemma \cite[Theorem 3.2.3 and Remark 3.2.1]{temam2001navier}, together with \eqref{c1_tim1} and \eqref{c1_tim2}, we conclude that $\{c_{1\varepsilon}\}_{\varepsilon \in (0,1)}$ is strongly precompact in $L^{\frac{4}{3}}(\Omega \times (0, T))$, thus establishing \eqref{precompact_c1}. A similar argument applies to \eqref{precompact_c2}, \eqref{precompact_chi}, and \eqref{precompact_tau}.
\end{proof}


\section{Construction of weak solutions}\label{sec:limits}

\begin{proof}[Proof of Theorem \ref{main_theorem}]
By \eqref{entropy_main2}, we conclude that
\begin{align*}
\{c_{1\varepsilon}^2 \ln (2 + c_{1\varepsilon})\}_{\varepsilon \in (0, 1)} \quad \text{and} \quad
\{\varepsilon c_{1\varepsilon}^\theta \ln (2 + c_{1\varepsilon})\}_{\varepsilon \in (0, 1)}
\end{align*}
are bounded in $L^1_{loc}(\bar{\Omega} \times [0, \infty))$, which implies that
\begin{align*}
  \{c_{1\varepsilon}^2\}_{\varepsilon \in (0, 1)} \quad \text{and} \quad
\{\varepsilon c_{1\varepsilon}^\theta\}_{\varepsilon \in (0, 1)}
\end{align*}
are equi-integrable. By the Dunford-Pettis theorem \cite[A8.14]{alt2016linear}, these sequences are weakly sequentially precompact in $L^1_{\mathrm{loc}}(\bar{\Omega} \times [0, \infty))$. Moreover, from \eqref{entropy_main2}, the sequence $\{c_{1\varepsilon}\}_{\varepsilon \in (0, 1)}$ is bounded in $L^2_{loc}(\bar{\Omega} \times [0, \infty))$. The above reasoning, together with \eqref{c1_tim1} and \eqref{precompact_c1}, allows us to apply a standard extraction procedure to select a subsequence $\{\varepsilon_j\}_{j \in \mathbb{N}}$ such that
\begin{equation*}
  \varepsilon_j \in (0, 1) ~\text{for all}~ j \in \mathbb{N}, ~\text{and}~ \varepsilon_j \searrow 0 ~\text{as}~ j \to \infty
\end{equation*}
and a nonnegative function
\begin{equation*}
\label{th3c1_main}
c_1 \in L^2_{loc}(\bar{\Omega} \times [0, \infty)) \cap L_{loc}^{\frac{4}{3}}([0, \infty) ; W^{1, \frac{4}{3}}(\Omega)),
\end{equation*}
\begin{equation}
\begin{cases}
 \label{mth1}
  c_{1\varepsilon_j} & \to c_1 ~\text{a.e. in}~ \Omega \times (0, \infty),                                                                          \\[5pt]
  c_{1\varepsilon_j}^2 & \rightharpoonup c_1^2 ~\text{in}~ L^1_{loc}(\bar{\Omega} \times [0, \infty)),                                              \\[5pt]
  c_{1\varepsilon_j} & \rightharpoonup c_1 ~\text{in}~ L^2_{loc}(\bar{\Omega} \times [0, \infty)),                                                  \\[5pt]
  \nabla c_{1\varepsilon_j} & \rightharpoonup \nabla c_1 ~\text{in}~ L^{\frac{4}{3}}_{loc}(\bar{\Omega} \times [0, \infty)),                        \\[5pt]
  \varepsilon_j c_{1\varepsilon_j}^\theta & \to 0 ~\text{a.e. in}~ \Omega \times (0, \infty),      \quad \text{and}                                    \\[5pt]
  \varepsilon_j c_{1\varepsilon_j}^\theta & \rightharpoonup 0 ~\text{in}~ L_{loc}^1(\bar{\Omega} \times [0, \infty))
\end{cases}
\end{equation}
as $j \to \infty$. With $1$ as a test function, from \eqref{mth1} we can directly have
\begin{equation}
  \label{mth411}
   c_{1\varepsilon_j} \to c_1 ~\text{in}~ L^2_{loc}(\bar{\Omega} \times [0, \infty)) ~ \text{as} ~ j \to \infty.
\end{equation}
Similarly, using \eqref{entropy_main1}, \eqref{entropy_main2}, \eqref{chi_tau_nab}, \eqref{c2_tim1}, and \eqref{precompact_c2}--\eqref{precompact_tau}, and passing to a subsequence if necessary, we obtain
\begin{align}
  \label{mth4} & c_{2\varepsilon_j} \to c_2 ~\text{in}~ L^{\frac{5}{4}}_{loc}(\bar{\Omega} \times [0, \infty)) ~\text{and a.e. in}~ \Omega \times (0, \infty),                        \\[5pt]
  \label{mth5} & \nabla c_{2\varepsilon_j}  \rightharpoonup \nabla c_2 ~\text{in}~ L^{\frac{5}{4}}_{loc}(\bar{\Omega} \times [0, \infty)),                                            \\[5pt]
  \label{eps2} & \varepsilon_j c_{2\varepsilon_j}^\theta \rightharpoonup 0 ~\text{in}~ L^1_{loc}(\bar{\Omega} \times [0, \infty)),                                                    \\[5pt]
  \label{mth6} & \chi_{\varepsilon_j} \to \chi ~\text{in}~ L^{2}_{loc}(\bar{\Omega} \times [0, \infty)) ~\text{and a.e. in}~ \Omega \times (0, \infty),                                     \\[5pt]
  \label{mth7} & \nabla \chi_{\varepsilon_j}  \rightharpoonup \nabla \chi ~\text{in}~ L^{2}_{loc}(\bar{\Omega} \times [0, \infty)),                                                         \\[5pt]
  \label{mth8} & \tau_{\varepsilon_j} \to \tau ~\text{in}~ L^{2}_{loc}(\bar{\Omega} \times [0, \infty)) ~\text{and a.e. in}~ \Omega \times (0, \infty),  \quad ~\text{and}            \\[5pt]
  \label{mth9} & \nabla \tau_{\varepsilon_j}  \rightharpoonup \nabla \tau ~\text{in}~ L^{2}_{loc}(\bar{\Omega} \times [0, \infty))
\end{align}
as $j \to \infty$, here $c_2, \chi$ and $\tau$ are nonnegative functions such that
\begin{equation}
\label{2mth1}
  \begin{cases}
    c_2 \in L^{\frac{5}{4}}_{loc}([0, \infty); W^{1, \frac{5}{4}}(\Omega)),                          \\[5pt]
    \chi \in  L^2_{loc}([0, \infty) ; W^{1, 2}(\Omega)),        \\[5pt]
    \tau \in L^\infty(\Omega \times (0, \infty)) \cap L^2_{loc}([0, \infty) ; W^{1, 2}(\Omega)).
  \end{cases}
\end{equation}
Let $T > 0$ and $\psi \in C^\infty_0(\bar{\Omega} \times [0, T))$ with $\dfrac{\partial \psi}{\partial \nu} = 0$ on $\partial \Omega \times [0, T)$, then from the first equation of \eqref{sys1} we have for all $\varepsilon \in (0, 1)$

\begin{align}
   \label{mthdefeq1}
  \nonumber  & - \int_0^T \int_\Omega c_{1\varepsilon} \partial_t \psi - \int_\Omega c_{10\varepsilon} \psi(\cdot, 0) = - a_1 \int_0^T \int_\Omega \nabla c_{1\varepsilon} \cdot \nabla \psi + b_\tau \int_0^T \int_\Omega c_{1\varepsilon} \nabla \tau_\varepsilon \cdot \nabla \psi - \int_0^T \int_\Omega \alpha_1(\chi_\varepsilon )\tfrac{c_{1\varepsilon}}{1 + c_{1\varepsilon}} \psi \\[5pt]
  & + \int_0^T \int_\Omega \alpha_2(\chi_\varepsilon )\tfrac{c_{2\varepsilon}}{1 + c_{2\varepsilon}} \psi + \beta \int_0^T \int_\Omega c_{1\varepsilon} \psi - \beta \int_0^T \int_\Omega  c_{1\varepsilon}^2 \psi - \beta \int_0^T \int_\Omega  c_{1\varepsilon} c_{2\varepsilon} \psi - \beta \int_0^T \int_\Omega \tau_\varepsilon c_{1\varepsilon}\psi - \varepsilon \int_0^T \int_\Omega c_{1\varepsilon}^\theta \psi.
\end{align}
Combining \eqref{reg_assumptions}, \eqref{mth1} and \eqref{mth411} we can deduce that
\begin{equation}
  \label{weak_sol1}
  - \int_0^T \int_\Omega c_{1\varepsilon} \partial_t \psi - \int_\Omega c_{10\varepsilon} \psi(\cdot, 0) \to - \int_0^T \int_\Omega c_{1} \partial_t \psi - \int_\Omega c_{10} \psi(\cdot, 0),
\end{equation}
and
\begin{equation}
\label{weak_sol2}
 - a_1 \int_0^T \int_\Omega \nabla c_{1\varepsilon} \cdot \nabla \psi \to  - a_1 \int_0^T \int_\Omega \nabla c_{1} \cdot \nabla \psi,
\end{equation}
as well as
\begin{align}
\label{weak_sol3}
 - \varepsilon \int_0^T \int_\Omega c_{1\varepsilon}^\theta \psi \to 0
\end{align}
as $\varepsilon = \varepsilon_j \searrow 0$.
From \eqref{mth1}, \eqref{mth411} and \eqref{mth9} we can claim that
\begin{align}
\label{weak_sol4}
 b_\tau \int_0^T \int_\Omega c_{1\varepsilon} \nabla \tau_\varepsilon \cdot \nabla \psi \to b_1 \int_0^T \int_\Omega c_{1} \nabla \tau \cdot \nabla \psi
\end{align}
as $\varepsilon = \varepsilon_j \searrow 0$. Taken together, \eqref{alpha_def}, \eqref{mth411}, \eqref{mth4}, and the uniform boundedness of functions of the form $\frac{c_{i\varepsilon}}{1 + c_{i\varepsilon}} < 1$, $i \in \{1, 2\}$, yield
\begin{align}
 \label{weak_sol6}
 - \int_0^T \int_\Omega \alpha_1(\chi_\varepsilon )\frac{c_{1\varepsilon}}{1 +  c_{1\varepsilon}} \psi \to - \int_0^T \int_\Omega \alpha_1(\chi)\frac{c_{1}}{1 +  c_{1}} \psi
\end{align}
and
\begin{align}
 \label{weak_sol7}
 \int_0^T \int_\Omega \alpha_2(\chi_\varepsilon )\frac{c_{2\varepsilon}}{1 +  c_{2\varepsilon}} \psi \to \int_0^T \int_\Omega \alpha_2(\chi)\frac{c_{2}}{1 +  c_{2}} \psi
\end{align}
as $\varepsilon = \varepsilon_j \searrow 0$. Taken together, \eqref{l1c1} and \eqref{c2_sq} result in
\begin{align*}
  c_{1\varepsilon} c_{2\varepsilon} \to c_{1} c_{2} \quad \text{in}~L^{1}(\Omega \times (0, T)),
\end{align*}
as $\varepsilon = \varepsilon_j \searrow 0$, which implies that
\begin{align}
 \label{weak_sol8}
  - \beta \int_0^T \int_\Omega c_{1\varepsilon}c_{2\varepsilon} \psi \to -\beta \int_0^T \int_\Omega c_{1} c_{2} \psi.
\end{align}
Combining \eqref{t_bound}, \eqref{mth1}, \eqref{mth411} and \cite[Lemma 3.9]{tao2021global} we can have
\begin{align}
 \label{weak_sol9}
  - \beta \int_0^T \int_\Omega c_{1\varepsilon} \tau_\varepsilon \psi \to -\beta \int_0^T \int_\Omega c_{1} \tau \psi.
\end{align}
Finally \eqref{mth1} and \eqref{mth411} yield
\begin{align}
  \label{cweak_sol1}
   \beta \int_0^T \int_\Omega c_{1\varepsilon} \psi - \beta \int_0^T \int_\Omega  c_{1\varepsilon}^2 \psi \to  \beta \int_0^T \int_\Omega c_{1} \psi - \beta \int_0^T \int_\Omega  c_{1}^2 \psi
\end{align}
as $\varepsilon = \varepsilon_j \searrow 0$.  The convergence results \eqref{weak_sol1}-\eqref{cweak_sol1} enable us to pass to the limit $\varepsilon = \varepsilon_j \searrow 0$ in \eqref{mthdefeq1} to get
\begin{align}
   \label{wmthdefeq1}
  \nonumber  & - \int_0^T \int_\Omega c_{1} \partial_t \psi - \int_\Omega c_{10} \psi(\cdot, 0) = - a_1 \int_0^T \int_\Omega \nabla c_{1} \cdot \nabla \psi + b_\tau \int_0^T \int_\Omega c_{1} \nabla \tau \cdot \nabla \psi - \int_0^T \int_\Omega \alpha_1(\chi)\tfrac{c_{1}}{1 + c_{1}} \psi\\[5pt]
  & + \int_0^T \int_\Omega \alpha_2(\chi)\tfrac{c_{2}}{1 + c_2} \psi + \beta \int_0^T \int_\Omega c_{1}(1 - c_{1}  -  c_{2}  - \tau )\psi.
\end{align}
From the second equation of \eqref{sys1} we have for all $\varepsilon \in (0, 1)$
\begin{align}
   \label{mthdefeq2}
  \nonumber  & - \int_0^T \int_\Omega c_{2\varepsilon} \partial_t \psi - \int_\Omega c_{20\varepsilon} \psi(\cdot, 0) = - a_2 \int_0^T \int_\Omega \nabla c_{2\varepsilon} \cdot \nabla \psi  - b_\chi \int_0^T \int_\Omega c_{2\varepsilon} \chi_\varepsilon \Delta \psi \\[5pt]
  & - b_\chi \int_0^T \int_\Omega \chi_\varepsilon \nabla c_{2 \varepsilon} \cdot \nabla \psi + \int_0^T \int_\Omega \alpha_1(\chi_\varepsilon )\frac{c_{1\varepsilon}}{1 + c_{1\varepsilon}} \psi - \int_0^T \int_\Omega \alpha_2(\chi_\varepsilon )\frac{c_{2\varepsilon}}{1 + c_{2\varepsilon}} \psi - \varepsilon \int_0^T \int_\Omega c_{2\varepsilon}^\theta \psi.
\end{align}
Combining \eqref{reg_assumptions}, \eqref{mth4}, \eqref{mth5} and \eqref{eps2} we can deduce that
\begin{equation}
  \label{c2weak_sol1}
  - \int_0^T \int_\Omega c_{2\varepsilon} \partial_t \psi - \int_\Omega c_{20\varepsilon} \psi(\cdot, 0) \to - \int_0^T \int_\Omega c_{2} \partial_t \psi - \int_\Omega c_{20} \psi(\cdot, 0),
\end{equation}
and
\begin{equation}
\label{c2weak_sol2}
 - a_2 \int_0^T \int_\Omega \nabla c_{2\varepsilon} \cdot \nabla \psi \to  - a_2 \int_0^T \int_\Omega \nabla c_{2} \cdot \nabla \psi,
\end{equation}
as well as
\begin{align}
\label{c2weak_sol3}
 - \varepsilon \int_0^T \int_\Omega c_{2\varepsilon}^\theta \psi \to 0
\end{align}
as $\varepsilon = \varepsilon_j \searrow 0$. Collecting \eqref{chi_bound2}, \eqref{mth4}, \eqref{mth6} and \cite[Lemma 3.9]{tao2021global}
\begin{equation*}
   \chi_\varepsilon \partial_i \psi \to \chi \partial_i \psi \quad ~\text{in}~ L^{\frac{5}{4}}(\Omega \times (0, T)),
\end{equation*}
for all $i = 1, 2, \ldots n$, as $\varepsilon = \varepsilon_j \searrow 0$, this along with \eqref{mth5} implies that
\begin{equation}
\label{c2weak1}
  - b_\chi \int_0^T \int_\Omega \chi_\varepsilon \nabla c_{2 \varepsilon} \cdot \nabla \psi \to  - b_\chi \int_0^T \int_\Omega \chi \nabla c_{2} \cdot \nabla \psi.
\end{equation}
as $\varepsilon = \varepsilon_j \searrow 0$. From \eqref{mth4} and \eqref{mth6} we can have
\begin{equation}
\label{c2weak2}
   - b_\chi \int_0^T \int_\Omega c_{2\varepsilon} \chi_\varepsilon \Delta \psi \to  - b_\chi \int_0^T \int_\Omega c_{2} \chi \Delta \psi
\end{equation}
as $\varepsilon = \varepsilon_j \searrow 0$. The convergence of the remaining two terms can be established using the same reasoning as in the derivation of \eqref{weak_sol6} and \eqref{weak_sol7}.
The convergence results \eqref{c2weak_sol1}-\eqref{c2weak2} enable us to pass the limit $\varepsilon = \varepsilon_j \searrow 0$ in \eqref{mthdefeq2} to have
\begin{align}
   \label{weakmthdefeq2}
  \nonumber  & - \int_0^T \int_\Omega c_{2} \partial_t \psi - \int_\Omega c_{20} \psi(\cdot, 0) = - a_2 \int_0^T \int_\Omega \nabla c_{2} \cdot \nabla \psi  - b_\chi \int_0^T \int_\Omega c_{2} \chi \Delta \psi \\[5pt]
  & - b_\chi \int_0^T \int_\Omega \chi \nabla c_{2 } \cdot \nabla \psi + \int_0^T \int_\Omega \alpha_1(\chi)\tfrac{c_{1}}{1 + c_{1}} \psi - \int_0^T \int_\Omega \alpha_2(\chi)\tfrac{c_{2}}{1 + c_{2}} \psi.
\end{align}
From the third equation of \eqref{sys1} we have for all $\varepsilon \in (0, 1)$
\begin{align}
   \label{mthdefeq3}
  - \int_0^T \int_\Omega \chi_{\varepsilon} \partial_t \psi - \int_\Omega \chi_{0\varepsilon} \psi(\cdot, 0) = - D_\chi \int_0^T \int_\Omega \nabla \chi_{\varepsilon} \cdot \nabla \psi - a_\chi \int_0^T \int_\Omega c_{1\varepsilon}\chi_\varepsilon \psi - a_\chi \int_0^T \int_\Omega c_{2\varepsilon}\chi_\varepsilon \psi + \int_0^T \int_\Omega F_\varepsilon(\chi_\varepsilon).
\end{align}
Combining \eqref{reg_assumptions}, \eqref{mth6} and \eqref{mth7} we can deduce that
\begin{equation}
  \label{chiweak_sol1}
  - \int_0^T \int_\Omega \chi_{\varepsilon} \partial_t \psi - \int_\Omega \chi_{0\varepsilon} \psi(\cdot, 0) \to - \int_0^T \int_\Omega \chi \partial_t \psi - \int_\Omega \chi \psi(\cdot, 0),
\end{equation}
and
\begin{equation}
\label{chiweak_sol2}
 - D_\chi \int_0^T \int_\Omega \nabla \chi_{\varepsilon} \cdot \nabla \psi \to  - D_\chi \int_0^T \int_\Omega \nabla \chi \cdot \nabla \psi
\end{equation}
as $\varepsilon = \varepsilon_j \searrow 0$. Together, \eqref{mth411} and \eqref{mth6} allow us to have
\begin{equation}
  \label{chi_weak1}
  - a_\chi \int_0^T \int_\Omega c_{1\varepsilon}\chi_\varepsilon \psi  \to - a_\chi \int_0^T \int_\Omega c_{1}\chi \psi
\end{equation}
as $\varepsilon = \varepsilon_j \searrow 0$. Collecting \eqref{chi_bound2}, \eqref{mth4}, \eqref{mth6} and \cite[Lemma 3.9]{tao2021global} together yield
\begin{equation*}
  c_{2\varepsilon}\chi_\varepsilon  \to c_{2}\chi \quad ~\text{in}~ L^{\frac{5}{4}}(\Omega \times (0, T)),
\end{equation*}
as $\varepsilon = \varepsilon_j \searrow 0$, which implies
\begin{equation}
\label{chi_weak2}
  - a_\chi \int_0^T \int_\Omega c_{2\varepsilon}\chi_\varepsilon \psi \to - a_\chi \int_0^T \int_\Omega c_{2}\chi \psi.
\end{equation}
From \cite[Theorem 7, Appendix C: Calculus, particularly C5]{evans2022partial} and \eqref{mth7} we can have
\begin{align}
    \label{weak89}
     \int_0^T \int_\Omega F_\varepsilon(\chi_\varepsilon) \to \int_0^T \int_\Omega F(\chi).
\end{align}
The convergence results \eqref{chiweak_sol1}-\eqref{weak89} enable us to pass the limit $\varepsilon = \varepsilon_j \searrow 0$ in \eqref{mthdefeq3} to have
\begin{align}
   \label{chi_convgmthdefeq3}
  - \int_0^T \int_\Omega \chi \partial_t \psi - \int_\Omega \chi_{0} \psi(\cdot, 0) = - D_\chi \int_0^T \int_\Omega \nabla \chi \cdot \nabla \psi - a_\chi \int_0^T \int_\Omega c_{1}\chi \psi - a_\chi \int_0^T \int_\Omega c_{2}\chi \psi + \int_0^T \int_\Omega F(\chi).
\end{align}
From the fourth equation of \eqref{sys1} we have for all $\varepsilon \in (0, 1)$
\begin{align}
   \label{taumthdefeq3}
  - \int_0^T \int_\Omega \tau_{\varepsilon} \partial_t \psi - \int_\Omega \tau_{0\varepsilon} \psi(\cdot, 0) & = - \varepsilon \int_0^T \int_\Omega \nabla \tau_{\varepsilon} \cdot \nabla \psi
  - \delta \int_0^T \int_\Omega  \tau_\varepsilon c_{1\varepsilon} \psi - \mu \int_0^T \int_\Omega  \tau_\varepsilon \psi  +  \int_0^T \int_\Omega \dfrac{c_{2\varepsilon}}{1 + c_{2\varepsilon}} \psi.
\end{align}
We will first handle the artificial term on the right-hand side. It follows from \eqref{chi_bound2}, \eqref{entropy_main1} and the Cauchy-Schwarz inequality that
\begin{align}
\label{artificial1}
  \nonumber \left| - \varepsilon \int_0^T \int_\Omega \nabla \tau_\varepsilon \cdot \nabla \psi \right| & \leq  \varepsilon \left(\int_0^T \int_\Omega \frac{|\nabla \tau_\varepsilon|^2}{\tau_\varepsilon}\right)^{\frac{1}{2}}\left(\int_0^T \int_\Omega \tau_\varepsilon\right)^{\frac{1}{2}}\|\nabla \psi\|_{L^\infty(\Omega)} \\[5pt]
  \nonumber  & \leq \varepsilon T \sqrt{\tau_\ast |\Omega|}\left(\sup_{t \in (0, T)}  \int_\Omega\frac{|\nabla \tau_\varepsilon|^2}{\tau_\varepsilon}\right) \|\nabla \psi\|_{L^\infty(\Omega)}  \\[5pt]
  & \to 0 ~\text{as}~ \varepsilon = \varepsilon_j \searrow 0.
\end{align}
The convergence of the remaining terms on the right-hand side of \eqref{taumthdefeq3} can be established by collecting \eqref{reg_assumptions}, \eqref{mth411}, \eqref{mth4} and \eqref{mth8}. The convergence results  enable us to pass the limit $\varepsilon = \varepsilon_j \searrow 0$ in \eqref{taumthdefeq3} to have
\begin{align}
   \label{taumthdefeq4}
   - \int_0^T \int_\Omega \tau \partial_t \psi - \int_\Omega \tau_{0} \psi(\cdot, 0) = - \delta \int_0^T \int_\Omega \tau c_{1} \psi - \mu \int_0^T \int_\Omega \tau \psi - \int_0^T \int_\Omega \frac{c_{2}}{1 + c_{2}} \psi .
\end{align}
By collecting \eqref{wmthdefeq1}, \eqref{weakmthdefeq2}, \eqref{chi_convgmthdefeq3} and \eqref{taumthdefeq4}, we complete the proof.
\end{proof}

\section*{Acknowledgement}
Funded by the Deutsche Forschungsgemeinschaft (DFG, German Research Foundation) - grant number 465242756, within the SPP 2311-\textit{Robust coupling of continuum-biomechanical in silico models to establish active biological system models for later use in clinical applications - Co-design of modeling, numerics and usability}. NM gratefully acknowledges partial support by the TU Nachwuchsring at the RPTU.

\phantomsection
\printbibliography
\end{document}